\documentclass[11pt,letterpaper,reqno]{amsart}
\usepackage{amssymb}
\usepackage{mathtools}
\usepackage{mathrsfs}
\usepackage{empheq}
\usepackage[left=.75 in, right=.75 in,top=.75 in, bottom=.75 in]{geometry}
\usepackage{stackengine}
\usepackage{bm}
\usepackage{nameref,hyperref,cleveref}

\newtheorem{theorem}{Theorem}[section]
\newtheorem{lemma}[theorem]{Lemma}
\newtheorem{definition}[theorem]{Definition}
\newtheorem{remark}[theorem]{Remark}
\numberwithin{equation}{section}

\newcommand{\abs}[1]{\left\lvert#1\right\rvert}

\newcommand{\norm}[1]{\left\lVert#1\right\rVert}

\stackMath
\newcommand\tsup[2][2]{%
 \def\useanchorwidth{T}%
  \ifnum#1>1%
    \stackon[-.5pt]{\tsup[\numexpr#1-1\relax]{#2}}{\scriptscriptstyle\sim}%
  \else%
    \stackon[.5pt]{#2}{\scriptscriptstyle\sim}%
  \fi%
}

\newcommand{\wiener}[2][\nu]{\dot{\mathcal{F}}_{#1}^{#2,1}(\mathbb{T})}
\newcommand{\wieners}[2][\nu]{\mathcal{F}_{#1}^{#2,1}(\mathbb{T})}
\newcommand{\wienersn}[1]{\mathcal{F}^{#1,1}(\mathbb{T})}
\newcommand{\norms}[3][\nu]{\left\lVert#2\right\rVert_{\dot{\mathcal{F}}_{#1}^{#3,1}}}

\newcommand{\T}{\mathbb T}

\newcommand{\avec}{{\bf A}}
\newcommand{\fmat}{{\bf F}}
\newcommand{\kerk}{{\bf K}}
\newcommand{\dmat}{{\bf S }}

\newcommand{\gmat}{\fmat^\gamma}

\newcommand{\mc}[1]{\mathcal{#1}}
\newcommand{\ident}{\mc{I}}
\newcommand{\compt}{\mc{U}}
\newcommand{\mompt}{\mc{V}}
\newcommand{\bobpt}{\mc{Z}}

\newcommand{\nrmv}{\bm{n}}
\newcommand{\tgtv}{\bm{\tau}}

\newcommand{\SB}{2\sin{\left(\beta/2\right)}}
\newcommand{\DBL}{D_\beta}
\newcommand{\DBTF}{\DBL f(\alpha)}
\newcommand{\DBTZ}{\DBL z(\alpha)}
\newcommand{\DBB}{\delta_\beta}

\newcommand{\DBZA}{\DBB z(\alpha)}

\newcommand{\eqdef}{\overset{\mbox{\tiny{def}}}{=}}
\newcommand{\pv}{\text{pv}}

\newcommand{\diffEXT}{\mathcal{H}}
\newcommand{\noCANCEL}{\mathcal{G}}

\begin{document}
	
\title[The Two-Phase Brinkman Problem]{Critical global well-posedness for the two-phase Brinkman problem with surface tension}
\author{Jae Ho Choi}
\address{
	Department of Mathematical Sciences\\
	Carnegie Mellon University\\
	Pittsburgh, PA 15213, USA
}
\email[J.H. Choi]{jaechoi@andrew.cmu.edu}

\subjclass[2010]{Primary 35B27, 35R35, 35B35; Secondary 76D03, 76D45, 35B33, 35A08}

\keywords{Brinkman, free boundary, asymptotic stability of steady state solutions}


	
\begin{abstract} 
	We study a system in which a fluid occupying a bounded simply connected region in \(\mathbb{R}^{2}\) is surrounded by another fluid with sharp boundary. They are incompressible Brinkman flows of the same viscosity saturating a porous medium with constant permeability. They interact via surface tension on their interface. We assume that the velocity has no jump across the interface and decays at infinity. We establish the asymptotic stability of the circular interface, which is a steady-state solution to our system. The technical threshold for the size of the initial perturbation for asymptotic stability can be explicitly calculated. We further show that the initial perturbation decays exponentially. We prove the existence, uniqueness, and continuous dependence on initial data of highly regular solutions by containing the perturbation variable in a Wiener-type algebra with a time-activated exponential weight, which allows for analytic solutions with much coarser initial data. The solution is contained in the Wiener-type algebra with the critical scaling exponent for our problem.
\end{abstract}

\maketitle

\section{Introduction}
In 1856, Henry Darcy published a study of the public water supply system for the city of Dijon, France, in which he famously formulated a law governing the flow of water through sands. Known as Darcy's law today, it has since been employed to simulate fluid flow in a porous medium for a variety of situations. In one of its modern forms, Darcy's law is written as
\begin{align}
    &\bm{v}=-\frac{\kappa}{\mu}(\nabla p + \rho \bm{g}), \quad \nabla \cdot \bm{v}=0,
\end{align}
where \(\bm{v}\) is the velocity of fluid through the porous medium; \(p\) is the intrinsic fluid pressure exceeding that of the atmosphere; \(\kappa\) is the intrinsic permeability of the porous medium; \(\mu\) is the intrinsic shear viscosity of the fluid; \(\rho\) is the intrinsic fluid density, which is a constant due to the incompressibility condition; and \(\bm{g}\) is the gravitational vector. In 1949, Henri Brinkman modified Darcy's law to incorporate viscous effects of fluid flow. Brinkman's equations are given by
\begin{align} \label{brinkmaneqn}
    \nabla p = -\frac{\mu}{\kappa}\bm{v}+\mu\Delta \bm{v}, \quad \nabla \cdot \bm{v}=0,
\end{align}
which approximate steady incompressible Stokes equations in the highly pervious regime (where \(\kappa \gg 1\)) and Darcy's law without gravity in the highly impervious regime (where \(\kappa \ll 1\)). In this paper, we consider a system of two immiscible fluids saturating a porous medium with constant permeability \(\kappa\). There is a fluid occupying a bounded simply connected region in \(\mathbb{R}^{2}\), which is surrounded by another fluid with sharp boundary. Both fluids are governed by Brinkman's equations and interact with one another via surface tension along the interface. We assume that the velocity has no jump across the interface and decays at infinity. For the sake of tractability, we restrict our attention to the equal viscosity case where the common viscosity is denoted by \(\mu\).

Well-posedness has been studied in numerous contexts for Brinkman's equations and its variants. Instead of attempting to compile a comprehensive list of such works, we highlight only a few papers related to our work. Kundu, Maharana, and Mishra \cite{MR4756023} proved the existence and uniqueness of a weak solution to a model describing the motion of unsteady incompressible Brinkman flow with concentration-dependent permeability under the Korteweg stress in a bounded domain in \(\mathbb{R}^{2}\). Mohan \cite{MR4314117} showed the existence and uniqueness of a local mild solution to deterministic convective Brinkman-Forchheimer equations on the whole space \(\mathbb{R}^{d}\), \(d \geq 2\), which is a nonlinear variant of Brinkman's equations. Kumankat and Wuttanachamsri \cite{MR4545960} established the well-posedness of a discretized form of the weak formulation of Brinkman's equations in a fixed domain, which is derived from a mixed finite element scheme to model fluid flow propelled by moving solid phases. Howell, Neilan, and Walkington \cite{MR3633543} developed a mixed variational formulation of a Brinkman model in a bounded, connected domain in \(\mathbb{R}^{d}\), \(d \in \{2,3\}\), which is uniformly well-posed for degenerate coefficients under the hypothesis that a generalized Poincar\'e inequality holds. To the best of our knowledge, however, there are no prior works on the well-posedness of Brinkman's equations in a free boundary setting. In this paper, we establish the asymptotic stability of the circular interface, which is a steady-state solution to our system. The technical threshold for the size of the initial perturbation for asymptotic stability can be explicitly calculated. We further show that the initial perturbation decays exponentially. Unlike the works cited earlier, we establish the existence, uniqueness, and continuous dependence on initial data of highly regular solutions by containing the perturbation variable in a Wiener-type algebra with a time-activated exponential weight, which allows for analytic solutions with much coarser initial data.

The main idea behind our proof is to linearize the evolution equation for the perturbation variable around the steady state solution \(0\), separating it into a well understood linear operator acting on the perturbation variable, plus the remainder which is “small” in some appropriate sense that depends on the clever selection of the solution space. Since this linearization is valid only near \(0\), our main result requires a small initial data condition. This proof strategy had been used by Gancedo, Garc\'ia--Ju\'arez, Patel, and Strain \cite{MR4679708}, for example, to establish the well-posedness of Muskat bubbles under gravity. Their evolution equation is of the form
\begin{align} \label{hilbert}
	\partial_{t}g + (-\Delta)^{3/2}g = \mathfrak{R}
\end{align}
in the high-frequency regime, where \(\mathfrak{R}\) denotes the nonlinear remainder in \(g\). To ensure that \(\mathfrak{R}\) is “small,” they employ the following Wiener-type algebra with a time-activated exponential weight
\begin{align} \label{keyspaces}
	\dot{\mathcal{F}}_{\nu(t)}^{s,1} = \biggl\{f: \mathbb{T} \to \mathbb{R} \mid \sum_{k \neq 0} e^{\nu(t)\abs{k}}\abs{k}^{s}\abs{\hat{f}(k)} < \infty \biggr\},
\end{align}
where \(\nu(t) = \frac{t}{1+t}\nu_{0}\) for some \(\nu_{0}>0\). The first superscript, \(s \geq 0\), measures the regularity of functions in the space, while the second superscript, \(1\), indicates that the \(l^{1}\) norm is taken with respect to the wave number \(k\). The time-activated exponential weight inside the norm associated with (\ref{keyspaces}) leads to the remarkable property that while the solution is analytic for any positive time, the initial perturbation has low regularity. We adopt this proof strategy to establish global-in-time well-posedness for the perturbation variable. In our setting, the evolution equation takes the form (\ref{hilbert}), except that the linear operator \((-\Delta)^{3/2}\) is replaced by \((-\Delta)^{1/2}\). The lower fractional order of the negative Laplacian makes our analysis more challenging due to the relative lack of regularity. On the physical side, \((-\Delta)^{1/2}\) can be characterized as the Hilbert transform acting on the spatial differential operator.

\section{Notational Conventions}
In this section, we establish notational conventions used throughout the paper. For \(\bm{x}=(x_1,x_2)^{T}\in \mathbb{R}^{2}\), we let \(|\bm{x}|\eqdef\sqrt{\bm{x}^T\overline{\bm{x}}}=\sqrt{|x_1|^2+|x_2|^2}\) and \(\bm{x}^\perp \eqdef (-x_{2},x_{1})\). In addition, if \(\bm{y}=(y_{1}, y_{2})^{T} \in \mathbb{R}^{2}\), then
$$
\bm{x}\otimes\bm{y}\eqdef\begin{bmatrix}x_1 y_1 & x_1 y_2 \\ x_2y_1  & x_2 y_2\end{bmatrix}.
$$
The one-dimensional torus with period \(2\pi\) is denoted by \(\mathbb{T}\eqdef\mathbb{R}/ (2\pi\mathbb{Z})\). Given a function \(\avec: \mathbb{T} \to \mathbb{R}^{n}\) where \(n \in \mathbb{N}\), we define \(\DBB \avec(\alpha) \eqdef \avec(\alpha+\beta)- \avec(\alpha)\) and \(D_{\beta}\avec(\alpha) \eqdef \frac{\delta_\beta  \avec(\alpha)}{ \SB}\). The Hilbert transform of a function \(f:\mathbb{T} \to \mathbb{R}\) is denoted by
\begin{align}
    \mathcal{H}(f)(\alpha)&\eqdef \frac{1}{\pi}\pv\int_{\mathbb{T}}\frac{f(\alpha-\beta)}{2\tan{(\beta/2)}}d\beta \label{Hilbert_def}=\frac{1}{2\pi}\pv\int_{\mathbb{T}}\frac{f(\alpha-\beta)-f(\alpha+\beta)}{2\tan{(\beta/2)}}d\beta \nonumber
\end{align}
and the square root of the negative Laplacian of \(f:\mathbb{T} \to \mathbb{R}\) is denoted by
\begin{equation} \label{lambda_op}
    \mathcal{H}(\partial_\alpha f)(\alpha)=\frac{1}{\pi}\pv\int_{\mathbb{T}}\frac{f(\alpha)-f(\alpha-\beta)}{(\SB)^2}d\beta.
\end{equation}
For any \(k \in \mathbb{Z}\), we define the \(k\)th Fourier mode of \(f:\mathbb{T} \to \mathbb{R}\) via
\begin{equation}\label{Fourier_coef}
    \begin{aligned}
        \widehat{f}(k)=\mathcal{F}(f)(k) \eqdef \frac{1}{2\pi}\int_{\mathbb{T}}f(\alpha)e^{-ik\alpha}d\alpha.
    \end{aligned}
\end{equation}
Moreover,
\begin{equation}\label{vectors}
    \bm{n}(\alpha)\eqdef
    \begin{bmatrix}
        \cos{\alpha} \\
        \sin{\alpha}
    \end{bmatrix}
    , \quad \tgtv(\alpha) \eqdef
    \begin{bmatrix}
        -\sin{\alpha}\\
        \cos{\alpha}
    \end{bmatrix}
    .
\end{equation}
For \(n \in \mathbb{N}\), we denote the closure and the boundary of a set \(S \subseteq \mathbb{R}^{n}\) by \(\overline{S}\) and \(\partial S\), respectively, the transpose of a matrix \(M \in \mathbb{R}^{n \times n}\) by \(M^{T}\), and the \(n \times n\) identity matrix by \(\ident_{n}\). Lastly, if \(M=M(g)\) is an expression involving a function \(g:\mathbb{T} \to \mathbb{R}\), then
\begin{align*}
	M_{0}\eqdef&M(0), \quad M_{L}=M_{L}(g)\eqdef\frac{d}{d\epsilon}[M(\epsilon g)]\mid_{\epsilon=0}, \mbox{ and }	M_{N}\eqdef M-M_{0}-M_{L}.
\end{align*}

\section{Problem Formulation} \label{pformulation}
In this paper, we consider the system in which a fluid occupying a bounded simply connected open region \(\Omega_{1}=\Omega_{1}(t) \subseteq \mathbb{R}^{2}\) is surrounded by another fluid in \(\Omega_{2}=\Omega_{2}(t)=\mathbb{R}^{2} \setminus \overline{\Omega_{1}(t)}\). If we denote the fluid interface by \(\Gamma =\Gamma(t)= \partial \Omega_{1}\), then the two fluids satisfy the equations
\begin{align*}
    \nabla p &= -\frac{\mu_{1}}{\kappa}\bm{u}+\mu_{1}\Delta \bm{u} \quad \mbox{in \(\Omega_{1}\)}, \quad \nabla p = -\frac{\mu_{2}}{\kappa}\bm{u}+\mu_{2}\Delta \bm{u} \quad \mbox{in \(\Omega_{2}\)}, \quad \nabla \cdot \bm{u} = 0 \quad \mbox{in \(\mathbb{R}^{2} \setminus \Gamma\)},
\end{align*}
where \(\bm{u}: \mathbb{R}^{2} \setminus \Gamma \times [0,\infty) \to \mathbb{R}^{2}\) is the fluid velocity and \(p: \mathbb{R}^{2} \setminus \Gamma \times [0,\infty) \to \mathbb{R}\) is the pressure. We assume that the two fluids saturate a porous medium with constant permeability \(\kappa>0\) and that \(\mu_{1}=\mu_{2}=\mu>0\).

Let us specify the coordinate position of \(\Gamma\) at time \(t \in [0,\infty)\) with a parametrization \(z(\cdot,t): \mathbb{T} \to \mathbb{R}^{2}\) in the counter-clockwise direction. For any function \(w\) defined on \(\Omega_{1} \cup \Omega_{2}\), we define the jump \([w] \eqdef w|_{\Gamma_{1}}-w|_{\Gamma_{2}}\) where \(w|_{\Gamma_{1}}\) and \(w|_{\Gamma_{2}}\) denote the traces of \(w\) onto \(\Gamma\) from within \(\Omega_{1}\) and \(\Omega_{2}\), respectively. The boundary conditions of the problem are then given by
\begin{align}
    [\bm{u}] &= 0, \label{nojump} \\
    [\Sigma\bm{\nu}] &= \gamma \frac{\partial}{\partial \alpha}\biggl(\frac{z_{\alpha}(\alpha,t)}{\abs{z_{\alpha}(\alpha,t)}}\biggr)\abs{z_{\alpha}(\alpha,t)}^{-1}, \label{imbal}
\end{align}
where \(\Sigma\) is the viscous stress tensor
\begin{align*}
    \Sigma =
    \begin{cases}
        \mu_{1}(\nabla \bm{u} + (\nabla \bm{u})^{T}) - p\ident_{2} \quad \mbox{in \(\Omega_{1}\)} \\
        \mu_{2}(\nabla \bm{u} + (\nabla \bm{u})^{T}) - p\ident_{2} \quad \mbox{in \(\Omega_{2}\)};
    \end{cases}
\end{align*}
\(\bm{\nu}\) is the outward-pointing unit normal vector on \(\Gamma\); and \(\gamma>0\) is the surface tension coefficient. The condition (\ref{nojump}) ensures that the normal component of the fluid velocity on the interface is well-defined. The condition (\ref{imbal}) states that surface tension is responsible for the interfacial stress imbalance driving the system. To complete the specification of the problem, we require that \(\bm{u} \to \bm{0}\) and \(p \to 0\) at infinity.

We dedicate the rest of this section to formulating a kinematic condition that forms the basis of our study of the problem.

\subsection{Boundary integral formulation of the fluid velocity}

Given a function \(\fmat\eqdef (F_{1}, F_{2})^T\) on \(\Gamma\), we impose that the fluid velocity \(\bm{u}=(u_{1}, u_{2})^{T} \in \mathbb{R}^{2} \setminus \Gamma\) be given by the single layer potential
\begin{equation} \label{slp}
    u_{j}(\bm{x}) = \frac{1}{2\pi}\int_{\Gamma}\sum_{i=1}^{2}F_{i}(s)G_{ij}(\bm{x}-\bm{y}(s))ds, \quad \bm{x} \in \mathbb{R}^{2} \setminus \Gamma, \quad j \in \{1,2\},
\end{equation}
where
\begin{align}\label{StokesFundamental}
    G_{ij}(\bm{w}) =
    &\delta_{ij}\frac{-1+\lambda \abs{\bm{w}} K_{1}(\lambda \abs{\bm{w}})+\lambda^{2}\abs{\bm{w}}^{2}K_{0}(\lambda \abs{\bm{w}})}{\lambda^{2} \abs{\bm{w}}^{2}}+\frac{w_{i}w_{j}(2-\lambda^{2}\abs{\bm{w}}^{2}K_{2}(\lambda \abs{\bm{w}}))}{\lambda^{2} \abs{\bm{w}}^{4}}
\end{align}
is the \((i,j)\)-component of the Green's function \(\bm{G}\) for incompressible Brinkman flow in \(\mathbb{R}^{2}\). Here, \(K_{n}\) is the modified Bessel function of the second kind of order \(n\) and \(\lambda\eqdef \sqrt{1/\kappa}\). Since the representation formula (\ref{slp}) exhibits no jump across the interface \(\Gamma\), the condition (\ref{nojump}) is automatically satisfied.

\subsection{Stress imbalance formulation}
Using potential theory, one can reformulate the stress imbalance condition \eqref{imbal} more conveniently. The stress \(\Sigma\) in \(\Omega_{2}\) is given by
\begin{align*}
    \Sigma_{ij}(\bm{x}) = \mu_{2}\int_{\mathbb{T}}\sum_{k=1}^{2}\mathcal{T}_{ijk}(z(\beta)-\bm{x})F_{k}(\beta)d\beta, \quad \bm{x} \in \Omega_{2}, \quad i,j \in \{1,2\},
\end{align*}
where
\begin{align}\label{Tijk}
    \mc{T}_{ijk}(\bm{w})=&\delta_{ij}\frac{w_{k}(4-\lambda^{2}\abs{\bm{w}}^{2}-2\lambda^{2}\abs{\bm{w}}^{2}K_{2}(\lambda \abs{\bm{w}}))}{2\pi \lambda^{2}\abs{\bm{w}}^{4}}+\frac{w_{i}w_{j}w_{k}(-8+\lambda^{3}\abs{\bm{w}}^{3}K_{3}(\lambda \abs{\bm{w}}))}{\pi\lambda^{2}\abs{\bm{w}}^{6}} \\
    &+\frac{(\delta_{ik}w_{j}+\delta_{jk}w_{i})(4-2\lambda^{2}\abs{\bm{w}}^{2}K_{2}(\lambda \abs{\bm{w}})-\lambda^{3}\abs{\bm{w}}^{3}K_{1}(\lambda \abs{\bm{w}}))}{2\pi \lambda^{2}\abs{\bm{w}}^{4}}.
\end{align}
Similarly, the stress \(\Sigma\) in \(\Omega_{1}\) is given by
\begin{align*}
    \Sigma_{ij}(\bm{x}) = \mu_{1}\int_{\mathbb{T}}\sum_{k=1}^{2}\mathcal{T}_{ijk}(z(\beta)-\bm{x})F_{k}(\beta)d\beta, \quad \bm{x} \in \Omega_{1}, \quad i,j \in \{1,2\}.
\end{align*}
In Einstein's summation notation,
\begin{align*}
    \left.\Sigma_{ij}(z(\alpha))\left(-\frac{z_{\alpha}(\alpha,t)^{\perp}}{\abs{z_{\alpha}(\alpha,t)}}\right)_{j}\right|_{\Gamma_{2}}=&\mu_{2}\biggl(-\frac{1}{2}F_{i}(\alpha)\abs{z_{\alpha}(\alpha)}^{-1} + \mbox{pv}\int_{\mathbb{T}}\mathcal{T}_{ijk}(z(\beta)-z(\alpha))F_{k}(\beta)\nu_{j}(\alpha)d\beta\biggr)
    \\
    \left.\Sigma_{ij}(z(\alpha))\left(-\frac{z_{\alpha}(\alpha,t)^{\perp}}{\abs{z_{\alpha}(\alpha,t)}}\right)_{j}\right|_{\Gamma_{1}}=&\mu_{1}\biggl(\frac{1}{2}F_{i}(\alpha)\abs{z_{\alpha}(\alpha)}^{-1} + \mbox{pv}\int_{\mathbb{T}}\mathcal{T}_{ijk}(z(\beta)-z(\alpha))F_{k}(\beta)\nu_{j}(\alpha)d\beta\biggr).
\end{align*}
Consequently, the stress imbalance condition (\ref{imbal}) can be written as
\begin{align}\label{viscosityjump}
    \fmat(\alpha) + 2 A_\mu\int_{\mathbb{T}} \dmat(\DBZA,z_{\alpha}(\alpha)^{\perp})\fmat(\beta+\alpha)d\beta = 2 A_\gamma \gmat(z_{\alpha}(\alpha)),
\end{align}
where
\begin{align}
    &A_\mu \eqdef \frac{\mu_{2}-\mu_{1}}{\mu_{1}+\mu_{2}}, \quad A_{\gamma} \eqdef \frac{\gamma}{\mu_1+\mu_2}, \quad \gmat(z_{\alpha}(\alpha)) \eqdef \frac{d}{d\alpha}\left(\frac{z_{\alpha}(\alpha)}{\abs{z_{\alpha}(\alpha)}}\right), \label{Amu}
\end{align}
and \(\dmat=\dmat(\DBZA,z_{\alpha}(\alpha)^\perp) \eqdef \{S_{ij}\}\), in which \(S_{ij}=\mathcal{T}_{ikj}(\delta_\beta z(\alpha))(z_{\alpha}(\alpha)^{\perp})_{k}\). In summary, the stress imbalance condition (\ref{imbal}) turns into the identity (\ref{viscosityjump}) that \(\fmat\) must satisfy. In the equal viscosity case, the identity \eqref{viscosityjump} collapses to the explicit formula for \(\fmat\):
\begin{align*}
    \fmat(\alpha)=2A_{\gamma}\frac{d}{d\alpha}\left(\frac{z_{\alpha}(\alpha)}{\abs{z_{\alpha}(\alpha)}}\right).
\end{align*}
The kinematic condition governing the evolution of our system at the boundary is given by the following no-slip condition:
\begin{equation} \label{NoSlipEQN}
     z_{t}(\alpha,t) =\bm{u}(z(\alpha,t),t)
   =  \frac{1}{2\pi}\int_{\T}\bm{G}(\DBZA) \fmat(\alpha +\beta) d\beta.
\end{equation}

For a variety of reasons, it is instructive to understand whether a partial differential equation exhibits an innate scale invariance. For one, it sheds light on the intrinsic mathematical landscape of the model, which is conducive to designing an optimal functional framework for proving the desired result. In the case of \eqref{NoSlipEQN}, there is regrettably no such scale invariance at the level of the full equation due to the presence of non-homogeneous modified Bessel functions of the second kind in the kernel \(\bm{G}\). More precisely, there exists no \(r \in \mathbb{R}\) such that if \(z(\alpha,t)\) is a solution to \eqref{NoSlipEQN}, then so too is \(z_{\epsilon}(\alpha,t)=\epsilon^{r}z(\epsilon\alpha,\epsilon t)\). While our model lacks global scale invariance, it does enjoy an asymptotic scale invariance in the following sense. In the regime where \(\abs{\bm{w}} \ll 1\), the kernel \(\bm{G}(\bm{w})\) expands so that the right hand side of \eqref{NoSlipEQN} becomes a convolutional operator acting on \(\bm{F}\), whose kernel acts like the standard free-space 2D Stokeslet. At this level, \eqref{NoSlipEQN} exhibits scale invariance with \(r=-1\). Our Brinkman flow can therefore be construed as a Stokes flow perturbed by Darcian effects.

\section{Function Spaces}
To study the well-posedness of our problem, we employ families of function spaces that generalize the Wiener algebra \(\mathcal{F}^{0,1}(\mathbb{T})\), i.e., the space of functions on \(\mathbb{T}\) whose Fourier series converge absolutely. They provide a convenient framework for formulating fractional regularity and capturing instantaneous analytic regularity. For any \(s \geq 0\), let \(\wienersn{s}\) be the space of functions on \(\mathbb{T}\) endowed with the norm
\begin{align*}
    \norm{h}_{\mathcal{F}^{s,1}}\eqdef\abs{\hat{h}(0)}+\sum_{k \in \mathbb{Z} \setminus \{0\}} \abs{k}^{s}\abs{\hat{h}(k)}
\end{align*}
and \(\wiener[]{s}\) the space of functions on \(\mathbb{T}\) equipped with the semi-norm
\begin{align*}
    \norms[]{h}{s} \eqdef \sum_{k \in \mathbb{Z} \setminus \{0\}}\abs{k}^{s}\abs{\hat{h}(k)}.
\end{align*}

For any \(s \geq 0\), \(\nu_{0}>0\), and \(t \geq 0\), let \(\mathcal{F}_{\nu,t}^{s,1}(\mathbb{T})\) be the space of functions on \(\mathbb{T}\) endowed with the norm
\begin{align*}
    \norm{h}_{\mathcal{F}_{\nu,t}^{s,1}}\eqdef \abs{\widehat{h}(0)}+\sum_{k \in \mathbb{Z} \setminus \{0\}}e^{\nu(t)\abs{k}}\abs{k}^{s}\abs{\hat{h}(k)}
\end{align*}
and \(\wiener[\nu, t]{s}\) the space of functions on \(\mathbb{T}\) equipped with the semi-norm
\begin{align} \label{expnormst}
    \norms[\nu,t]{h}{s} \eqdef\sum_{k \in \mathbb{Z} \setminus \{0\}} e^{\nu(t)\abs{k}}\abs{k}^{s}\abs{\hat{h}(k)},
\end{align}
where \(\nu(t)\eqdef\nu_{0}\frac{t}{1+t}\). We note that \(\nu'(t)=\nu_{0}/(1+t)^{2}\). For notational brevity, we drop the explicit time dependence and write \(\mathcal{F}_{\nu}^{s,1}(\mathbb{T})\) and \(\wiener{s}\) as shorthand for \(\mathcal{F}_{\nu,t}^{s,1}(\mathbb{T})\) and \(\wiener[\nu,t]{s}\), respectively.

For any \(s \geq 0\), \(\nu_{0}>0\), and \(t \geq 0\), let \(\dot{Z}_{\nu}^{1,1}(\mathbb{T})\) be the mean-zero subspace of \(\wiener{1}\), i.e., \(\dot{Z}_{\nu}^{1,1}(\mathbb{T})\eqdef\{g \in \wiener{1} : \widehat{g}(0)=0\}\). Similarly, \(\dot{Z}^{0,1}(\mathbb{T})\eqdef\{g \in \dot{\mathcal{F}}^{0,1}(\mathbb{T}) : \widehat{g}(0)=0\}\).

Lastly, we introduce the space
\begin{align}
    X_{T,\nu}(\mathbb{T})\eqdef C((0,T];\dot{Z}_{\nu}^{1,1}(\mathbb{T}))\cap L^{1}(0,T;\wiener{2}), \label{xtnu}
\end{align}
which is a Banach space when equipped with the norm
\begin{align*}
    \norm{g}_{X_{T,\nu}}=\norm{g}_{X_{T,\nu}(\mathbb{T})}\eqdef\norm{g}_{L^{\infty}(0,T;\wiener{1})}+\norm{g}_{L^{1}(0,T;\wiener{2})}.
\end{align*}

We seek a solution in the space \(C([0,T];\dot{Z}_{\nu}^{1,1}(\mathbb{T}))\) because \(s=1\) is the critical scaling exponent of the Wiener-type algebra for our problem. That is, \(\wiener{1}\) is the space whose norm is invariant under the natural scaling of our model identified at the end of Section \ref{pformulation}. 

\section{The Initial Data}
We consider an initial region \(\Omega_{1}(0)\subseteq \mathbb{R}^2\) whose boundary $\Gamma(0)$ admits a parametrization of the form
\begin{equation}\label{z0_polar}
	\begin{aligned}
		z_0(\alpha)&=\bm{x}_{c,0}+R(1+2f_0(\alpha))^{\frac12}\nrmv(\alpha),
	\end{aligned}
\end{equation}
where \(\bm{x}_{c,0}\) is the centroid of \(\Omega_{1}(0)\) defined by
\begin{equation*}
	\begin{aligned}  
		\bm{x}_{c,0}&\eqdef\frac{1}{\abs{\Omega_{1}(0)}}\int_{\Omega_{1}(0)}\bm{x} d\bm{x}, \quad \text{with} \quad \abs{\Omega_{1}(0)}\eqdef\int_{\Omega_{1}(0)}d\bm{x}.
	\end{aligned}
\end{equation*}
Here, $R=\sqrt{\abs{\Omega_{1}(0)}/\pi}$ is the effective radius, and $f_0:\mathbb{T}\to (-1,\infty)$ is a perturbation variable satisfying the constraints
\begin{equation}\label{f_cond}
	\begin{aligned}  
		\int_{-\pi}^{\pi}  f_{0}(\alpha) d\alpha =0 \quad \text{and} \quad \int_{-\pi}^{\pi}(1+2f_{0}(\alpha))^{3/2}\nrmv(\alpha)d\alpha=\bm{0}.
	\end{aligned}
\end{equation}
Due to the translational invariance of the governing system, we may assume without loss of generality that the centroid of the initial region is located at the origin, i.e., \(\bm{x}_{c,0}=\bm{0}\).

The first condition in \eqref{f_cond} ensures that the area of the region enclosed by \eqref{z0_polar} is precisely \(\abs{\Omega_{1}(0)}\), as shown by the following calculation:
\begin{align*}
	\int_{\Omega_{1}(0)}dx&=\int_{-\pi}^{\pi}\int_{0}^{R(1+2f_{0}(\alpha))^{\frac{1}{2}}}r dr d\alpha=\pi R^{2}+R^{2}\int_{-\pi}^{\pi}f_{0}(\alpha)d\alpha=\pi R^{2}=\abs{\Omega_{1}(0)}.
\end{align*}
Similarly, the second condition in \eqref{f_cond} guarantees that the centroid of the region enclosed by \eqref{z0_polar} is consistent with \(\bm{x}_{c,0}\):
\begin{align*}
	\frac{1}{\abs{\Omega_{1}(0)}}\int_{\Omega_{1}(0)}xdx=&\frac{1}{\abs{\Omega_{1}(0)}}\int_{-\pi}^{\pi}\frac{R^{3}(1+2f_{0}(\alpha))^{3/2}}{3}\nrmv(\alpha)d\alpha=\bm{0}.
\end{align*}

\section{The Parametrization Scheme}
For \(t > 0\), we parametrize \(\Gamma=\Gamma(t)\) around a time-dependent pole \(\bm{C}(t)\) via
\begin{align} \label{eveqn}
    z(\alpha,t)=\bm{C}(t)+   R(1+2f(\alpha,t))^{\frac12}\nrmv(\alpha+\vartheta(\alpha,t)),
\end{align}
where \(\bm{C}(t)\) is specified in \eqref{cCONSTdef}. We utilize this parametrization scheme to transform the kinematic condition \eqref{NoSlipEQN} into an evolution equation for \(f\). We first expand the kinematic condition \eqref{NoSlipEQN} as
\begin{align} \label{noslipcomps}
	z_t(\alpha,t) = U(\alpha, t) {\bm{n}}(\alpha+\vartheta(\alpha,t)) + T(\alpha, t) {\bm{\tau}}(\alpha+\vartheta(\alpha,t)),
\end{align}
where the normal and tangential velocities are given by
\begin{align*}
	T(\alpha, t)&=\frac{1}{2\pi}\bm{\tau}(\alpha+\vartheta(\alpha,t))\cdot\int_{\T}\bm{G}(\DBZA) \fmat(\alpha +\beta) d\beta, \\
	U(\alpha, t)&=\frac{1}{2\pi}\bm{n}(\alpha+\vartheta(\alpha,t))\cdot\int_{\T}\bm{G}(\DBZA) \fmat(\alpha +\beta) d\beta.
\end{align*}
Differentiating \eqref{eveqn} with respect to \(t\), we obtain
\begin{align*}
	z_{t}(\alpha,t)=\dot{\bm{C}}(t)+Rf_{t}(\alpha,t)(1+2f(\alpha,t))^{-\frac{1}{2}}\nrmv(\alpha+\vartheta(\alpha,t)) +R(1+2f(\alpha,t))^{\frac{1}{2}}\tgtv(\alpha+\vartheta(\alpha,t))\vartheta_{t}(\alpha,t).
\end{align*}
We plug it into the left hand side of \eqref{noslipcomps}. 
Projecting the resulting equation onto \({\bm{n}}(\alpha+\vartheta(\alpha,t))\) and \({\bm{\tau}}(\alpha+\vartheta(\alpha,t))\) yields
\begin{align*}
	f_{t}(\alpha,t)(1+2f(\alpha,t))^{-\frac{1}{2}}=&\frac{1}{2\pi R}\nrmv(\alpha+\vartheta(\alpha,t))\cdot\int_{\T}\bm{G}(\DBZA)\bm{F}(\alpha+\beta)d\beta-\frac{1}{R}\dot{\bm{C}}(t)\cdot\nrmv(\alpha+\vartheta(\alpha,t))
\end{align*}
and
\begin{align*}
	\vartheta_{t}(\alpha,t)=&\frac{1}{2\pi}\frac{\tgtv(\alpha+\vartheta(\alpha,t))}{R(1+2f(\alpha,t))^{\frac{1}{2}}}\cdot\int_{\T}\bm{G}(\DBZA)\bm{F}(\alpha +\beta)d\beta-\frac{\dot{\bm{C}}(t)\cdot\tgtv(\alpha+\vartheta(\alpha,t))}{R(1+2f(\alpha,t))^{\frac{1}{2}}},
\end{align*}
respectively.

Since the evolution of the tangential phase variable \(\vartheta\) does not change the physical geometry of the interface, we are free to impose that \(\vartheta \equiv 0\). Under this choice, the equation for \(\vartheta\) reduces to the constraint
\begin{align} \label{zeroCOND}
	\tgtv(\alpha)\cdot\int_{\T}\bm{G}(\DBZA) \bm{F}(\alpha +\beta) d\beta=2\pi\dot{\bm{C}}(t)\cdot\tgtv(\alpha),
\end{align}
while the equation for $f$ becomes
\begin{align} \label{evolutionEQ}
	f_{t}(\alpha,t)=&(1+2f(\alpha,t))^{\frac{1}{2}}\frac{\nrmv(\alpha)}{2\pi R} \cdot \int_{\T}\bm{G}(\DBZA)\bm{F}(\alpha+\beta)d\beta -\frac{1}{R}(1+2f(\alpha,t))^{\frac{1}{2}}\nrmv(\alpha)\cdot \dot{\bm{C}}(t).
\end{align}

In summary, for \(t>0\), we adopt the parametrization
\begin{align}\label{parametrization}
	&z(\alpha,t)=   \bm{C}(t)+R(1+2f(\alpha,t))^{\frac12}\nrmv(\alpha),
\end{align}
where the moving pole \(\bm{C}(t)\) is defined as
\begin{align}\label{cCONSTdef}
	\bm{C}(t) = -3R(\pi+\text{Im}(I_{1})(\lambda,R,1)+2\text{Re}(I_{2})(\lambda,R,1))\frac{A_{\gamma}}{2\pi R}\frac{1}{2\pi}\int_{0}^{t}\int_{\mathbb{T}}f(\alpha,s)\nrmv(\alpha)d\alpha ds
\end{align}
and the constants \(I_{1}(\lambda,R,1)\) and \(I_{2}(\lambda,R,1)\) originate from \eqref{I1I2}. As shown in subsequent sections, this precise choice of \(\bm{C}(t)\) dynamically couples the first Fourier mode of \(f\) with its higher Fourier modes, ultimately guaranteeing its asymptotic decay in time. Differentiating \eqref{cCONSTdef} with respect to \(t\) yields
\begin{align} \label{DERIVcCONST}
	\dot{\bm{C}}(t) =-3R(\pi+\text{Im}(I_{1})(\lambda,R,1)+2\text{Re}(I_{2})(\lambda,R,1))\frac{A_{\gamma}}{2\pi R}\frac{1}{2\pi}\int_{\mathbb{T}}f(\alpha,t)\nrmv(\alpha)d\alpha.
\end{align}
Consequently, under the parametrization \eqref{parametrization} and the geometric constraint \eqref{zeroCOND}, the kinematic condition \eqref{NoSlipEQN} reduces directly  to \eqref{evolutionEQ}.

\subsection{Revisiting the no-slip condition}
Let us suppose that, instead of \eqref{NoSlipEQN}, we consider the normal kinematic condition
\begin{align} \label{normalnoslip}
    z_{t}(\alpha,t) \cdot z_{\alpha}(\alpha,t)^{\perp} = \bm{u}(z(\alpha,t),t) \cdot z_{\alpha}(\alpha,t)^{\perp}.
\end{align}
Applying the parametrization \eqref{parametrization}, we obtain
\begin{equation} \notag
	z_{\alpha}(\alpha,t)^{\perp}={R}f_{\alpha}(\alpha,t)(1+2f(\alpha,t))^{-\frac{1}{2}}\tgtv(\alpha)-{R}(1+2f(\alpha,t))^{\frac{1}{2}}\nrmv(\alpha).
\end{equation}
Substituting this relation into \eqref{normalnoslip} yields the evolution equation
\begin{align*} 
	f_t(\alpha,t)
	=&-\frac{1}{R}(1+2f(\alpha,t))^{\frac12}\dot{\bm{C}}(t)\cdot\nrmv(\alpha)+\frac{1}{R}(1+2f(\alpha,t))^{-\frac12}f_{\alpha}(\alpha,t)\dot{\bm{C}}(t)\cdot \tgtv(\alpha) \\
	&-(1+2f(\alpha,t))^{-\frac12}f_{\alpha}(\alpha,t)\frac{\tgtv(\alpha)}{2\pi R}\cdot\int_{\mathbb{T}}\bm{G}(\DBZA)\bm{F}(\alpha+\beta)d\beta \\
	&+(1+2f(\alpha,t))^{\frac12}\frac{\nrmv(\alpha)}{2\pi R} \cdot \int_{\T}\bm{G}(\DBZA) \bm{F}(\alpha +\beta) d\beta.
\end{align*}
By virtue of \eqref{zeroCOND}, the second and third terms on the right hand side cancel, reducing this expression directly to \eqref{evolutionEQ}.

Consequently, the equation \eqref{normalnoslip} may serve as the governing kinematic condition for our model in place of \eqref{NoSlipEQN}. Under the constraint \eqref{zeroCOND}, both formulations reduce to the identical evolution equation (\ref{evolutionEQ}).

\section{The Evolution Equation}
To express the evolution equation \eqref{evolutionEQ} explicitly in terms of \(f\), it is necessary to derive an analytical representation for the kernel
\begin{equation} \label{kernelUSE}
    \kerk(\alpha,\beta) \eqdef (1+2f(\alpha,t))^{\frac{1}{2}}\nrmv(\alpha)\cdot \bm{G}(\DBZA) 
\end{equation}
In this notation, the evolution equation \eqref{evolutionEQ} takes the form
\begin{equation}\label{NEWequation}
	f_{t}(\alpha,t) = \frac{1}{2\pi R} \int_{\T}\kerk(\alpha,\beta) \fmat(\alpha +\beta) d\beta-(1+2f(\alpha,t))^{\frac{1}{2}}\frac{\dot{\bm{C}}(t)\cdot \nrmv(\alpha)}{R}.
\end{equation}
Substituting the fundamental solution \eqref{StokesFundamental} into \eqref{kernelUSE} yields
\begin{align*}
    \nrmv(\alpha) \cdot \bm{G}(\DBZA)=&\nrmv(\alpha)\cdot\biggl(\biggl(\frac{-1}{\lambda^{2}\abs{\DBZA}^{2}}+\frac{K_{1}(\lambda\abs{\DBZA})}{\lambda \abs{\DBZA}}+K_{0}(\lambda\abs{\DBZA})\biggr)I \\
    &+\biggl(\frac{1}{\abs{\DBTZ}^{2}}\frac{2}{\lambda^{2}\abs{\DBZA}^{2}}-\frac{K_{2}(\lambda\abs{\DBZA})}{\abs{\DBTZ}^{2}}\biggr)\DBTZ \otimes \DBTZ\biggr).
\end{align*}
To expand \(\kerk(\alpha,\beta)\), it suffices to express \(\abs{\DBZA}\), \(\DBTZ\), and \(\abs{\DBTZ}^{2}\) as functions of \(f\). We begin with some preliminary identities. Utilizing the algebraic relation
\begin{align}\label{sqrt.diff}
\sqrt{a}-\sqrt{b}
    =\frac{a-b}{\sqrt{a}+\sqrt{b}} \quad \text{for } \min\{a,b\}>0,
\end{align}
we find that
\begin{align*}
    (1+2f(\alpha))^{\frac12}-1 = f(\alpha) \frac{2}{1+(1+2f(\alpha))^{\frac12}}
    =f(\alpha)\left(1+\diffEXT_1\right),
\end{align*}
where the higher-order corrections are gathered in
\begin{align*}
    \diffEXT_1  \eqdef
    \frac{1-(1+2f(\alpha))^{\frac12}}{1+(1+2f(\alpha))^{\frac12}}
    =f(\alpha) \noCANCEL_1, \quad \noCANCEL_1  \eqdef
    \frac{-2}{\left(1+(1+2f(\alpha))^{\frac12}\right)^2}.
\end{align*}
Consequently, we expand \((1+2f(\alpha))^{\frac12} 
=1+f(\alpha)(1 + \diffEXT_1)
=1+f(\alpha) +f(\alpha)^2 \noCANCEL_1\).
This allows us to partition the parametrization \eqref{parametrization} into its linear and nonlinear components: \(z(\alpha) = z_0(\alpha)+z_L(\alpha)+z_N(\alpha)\), where
\begin{align*}
    z_0(\alpha) = & R \nrmv(\alpha), \quad z_L(\alpha) = R \nrmv(\alpha) f(\alpha)+\bm{C}(t), \quad z_N(\alpha) = R \nrmv(\alpha) f(\alpha) \diffEXT_1
    =
    R \nrmv(\alpha) f(\alpha)^2 \noCANCEL_1.
\end{align*}
Similarly,
\begin{align*}
   \DBB (1+2f(\alpha))^{\frac12}
   = &
   \DBB f(\alpha)\frac{2}{(1+2f(\alpha+\beta))^{\frac12}+(1+2f(\alpha))^{\frac12}}
   = 
     \DBB f(\alpha) \left(1+\diffEXT_2 \right),
\end{align*}
where
\begin{align*}
    \diffEXT_2  \eqdef &
    \frac{2-(1+2f(\alpha+\beta))^{\frac12}-(1+2f(\alpha))^{\frac12}}{(1+2f(\alpha+\beta))^{\frac12}+(1+2f(\alpha))^{\frac12}}=f(\alpha+\beta) \noCANCEL_2
     +f(\alpha)  \noCANCEL_3,
\end{align*}
in which
\begin{align*}
\noCANCEL_2
    \eqdef &
 \frac{-2}{\left((1+2f(\alpha+\beta))^{\frac12}+(1+2f(\alpha))^{\frac12}\right) \left(1+(1+2f(\alpha+\beta))^{\frac12} \right)},
        \\
 \noCANCEL_3
    \eqdef    &
 \frac{-2}{\left((1+2f(\alpha+\beta))^{\frac12}+(1+2f(\alpha))^{\frac12}\right) \left(1+(1+2f(\alpha))^{\frac12} \right)}.
\end{align*}

Using these preliminary calculations, we obtain
\begin{align}
\DBTZ=&R\nrmv(\alpha) \cos(\beta)\DBTF  \left(1+\diffEXT_2 \right)-R\nrmv(\alpha) \sin(\beta/2)\left(1+f(\alpha)\left(1+\diffEXT_1\right) \right) \label{eqDBTZzero} \\
&+R\tgtv(\alpha)\cos(\beta/2)\left(  1+f(\alpha+\beta)+\DBB f(\alpha) \diffEXT_2+ f(\alpha) \diffEXT_1\right). \notag
\end{align}
To simplify the notation, we introduce the auxiliary functions
\begin{align}
    \compt=\compt(f) \eqdef & \cos(\beta)\DBTF \left(1+\diffEXT_2 \right)  
    -\sin(\beta/2)\left(1+f(\alpha)+f(\alpha)\diffEXT_1 \right),\label{DEFcompt} \\
	\mompt=\mompt(f) \eqdef & \cos(\beta/2) \left(  1+f(\alpha+\beta)+\DBB f(\alpha) \diffEXT_2+ f(\alpha) \diffEXT_1\right). \label{DEFmompt}
\end{align}
We can then write \(\DBTZ=R\nrmv(\alpha) \compt(f)+R\tgtv(\alpha)\mompt(f)\), which implies that \(|\DBTZ |^2= R^2 \compt(f)^2+R^2 \mompt(f)^2\). Defining \(\mathcal{Z}(f) \eqdef \compt(f)^{2}+\mompt(f)^{2}\), we obtain that \(\abs{\DBTZ}^{2}=R^{2}\mathcal{Z}(f)\) and \(\abs{\DBZA}^{2}=R^{2}\abs{2\sin(\beta/2)}^{2}\mathcal{Z}(f)\). To carefully track the linear and nonlinear contributions within \(\mathcal{Z}(f)\), we expand the squares \(\compt(f)^{2}\) and \(\mompt(f)^{2}\) as follows:
\begin{align*}
 \bobpt(f)
=&
 \compt(f)^2
+
 \mompt(f)^2
 =\left( \compt_0+\compt_L+\compt_N \right)^2
 +
 \left( \mompt_0+\mompt_L+\mompt_N \right)^2
 \\
 =& \compt_0^2+\mompt_0^2 + 2\left(\compt_0 \compt_L+\mompt_0\mompt_L \right)+\compt_L^2+\mompt_L^2 
 +
 2\left( \compt_0+\compt_L \right)\compt_N
  +
 2\left( \mompt_0+\mompt_L \right)\mompt_N
 + \compt_N^2+\mompt_N^2,
\end{align*}
where the constituent terms are given by
\begin{align*}
	&\compt_0= -\sin(\beta/2), \quad \compt_L(f)=\cos(\beta)\DBTF -\sin(\beta/2)f(\alpha), \\
	&\compt_N(f) = \cos(\beta)\DBTF \diffEXT_2 -\sin(\beta/2)f(\alpha) \diffEXT_1, \\
	&\mompt_0= \cos(\beta/2), \quad \mompt_L(f) = \cos(\beta/2)  f(\alpha+\beta), \quad \mompt_N(f) = \cos(\beta/2)  \left(\DBB f(\alpha) \diffEXT_2+ f(\alpha) \diffEXT_1\right).
\end{align*}
This decomposition yields
\begin{align}
    \mathcal{Z}_{0}=&\compt_0^2+\mompt_0^2=\sin^{2}(\beta/2)+\cos^{2}(\beta/2)=1, \\
    \mathcal{Z}_{L}=&2\left(\compt_0 \compt_L+\mompt_0\mompt_L \right)=f(\alpha)+f(\alpha+\beta), \label{ZL} \\
    \mathcal{Z}_{N}=&\compt_L^2+\mompt_L^2+2\left( \compt_0+\compt_L \right)\compt_N+2\left(\mompt_0+\mompt_L \right)\mompt_N+\compt_N^2+\mompt_N^2. \label{ZN}
\end{align}
Putting these pieces together, we obtain
\begin{align}
    &\kerk(\alpha,\beta)(1+2f(\alpha))^{-\frac{1}{2}} =\nrmv(\alpha) \cdot \label{kernelKexp} \\
    &\biggl(\biggl(\frac{-1}{\lambda^{2}R^{2}\abs{2\sin(\beta/2)}^{2}\mathcal{Z}(f)}+\frac{K_{1}(\lambda R\abs{2\sin(\beta/2)}\mathcal{Z}(f)^{\frac{1}{2}})}{\lambda R\abs{2\sin(\beta/2)}\mathcal{Z}(f)^{\frac{1}{2}}}+K_{0}(\lambda R\abs{2\sin(\beta/2)}\mathcal{Z}(f)^{\frac{1}{2}})\biggr)I \notag \\
    &+\biggl(\frac{1}{R^{2}\mathcal{Z}(f)}\frac{2}{\lambda^{2}R^{2}\abs{2\sin(\beta/2)}^{2}\mathcal{Z}(f)}-\frac{K_{2}(\lambda R\abs{2\sin(\beta/2)}\mathcal{Z}(f)^{\frac{1}{2}})}{R^{2}\mathcal{Z}(f)}\biggr) \notag \\
    &\biggl(R^{2}\compt(f)^{2}\nrmv(\alpha) \otimes \nrmv(\alpha)+R^{2}\compt(f)\mompt(f)\nrmv(\alpha) \otimes \tgtv(\alpha) \notag \\
    &+R^{2}\compt(f)\mompt(f)\tgtv(\alpha)\otimes \nrmv(\alpha)+R^{2}\mompt(f)^{2}\tgtv(\alpha) \otimes \tgtv(\alpha)\biggr)\biggr). \notag
\end{align}
Therefore,
\begin{align*}
&\kerk(\alpha,\beta)  \fmat(\alpha+\beta)
\\
= &\biggl((1+2f(\alpha))^{\frac{1}{2}}\biggl(\frac{-1}{\lambda^{2}R^{2}\abs{2\sin(\beta/2)}^{2}\mathcal{Z}(f)}+\frac{K_{1}(\lambda R\abs{2\sin(\beta/2)}\mathcal{Z}(f)^{\frac{1}{2}})}{\lambda R\abs{2\sin(\beta/2)}\mathcal{Z}(f)^{\frac{1}{2}}} \\
&+K_{0}(\lambda R\abs{2\sin(\beta/2)}\mathcal{Z}(f)^{\frac{1}{2}})\biggr)\nrmv(\alpha) \cdot\fmat(\alpha+\beta) \\
    &+(1+2f(\alpha))^{\frac{1}{2}}\biggl(\frac{1}{R^{2}\mathcal{Z}(f)}\frac{2}{\lambda^{2}R^{2}\abs{2\sin(\beta/2)}^{2}\mathcal{Z}(f)}-\frac{K_{2}(\lambda R\abs{2\sin(\beta/2)}\mathcal{Z}(f)^{\frac{1}{2}})}{R^{2}\mathcal{Z}(f)}\biggr) \\
    &R^{2}\compt(f)^{2}\nrmv(\alpha)\cdot \fmat(\alpha+\beta) \\
    &+(1+2f(\alpha))^{\frac{1}{2}}\biggl(\frac{1}{R^{2}\mathcal{Z}(f)}\frac{2}{\lambda^{2}R^{2}\abs{2\sin(\beta/2)}^{2}\mathcal{Z}(f)}-\frac{K_{2}(\lambda R\abs{2\sin(\beta/2)}\mathcal{Z}(f)^{\frac{1}{2}})}{R^{2}\mathcal{Z}(f)}\biggr) \\
    &R^{2}\compt(f)\mompt(f)\tgtv(\alpha) \cdot \fmat(\alpha+\beta)\biggr).
\end{align*}

\section{The forcing term \texorpdfstring{$\gmat$}{gmat}}
Since
\begin{align*}
	&z_{\alpha}(\alpha) = R(1+2f(\alpha))^{-\frac12} f_{\alpha}(\alpha) \nrmv(\alpha)+R(1+2f(\alpha))^{\frac12}\tgtv(\alpha),
	\\
	&| z_{\alpha}(\alpha) |^2 = R^2 \frac{(f_\alpha(\alpha))^2 }{1+2f(\alpha)} +R^2(1+2f(\alpha)),
\end{align*}
we obtain
\begin{align}\label{gmatWF}
	\gmat(f) 
	=& \frac{\partial}{\partial\alpha}\left(\frac{z_{\alpha}(\alpha)}{\abs{z_{\alpha}(\alpha)}}\right)
	=\frac{\partial}{\partial\alpha}\biggl(\frac{f_{\alpha}(\alpha)\nrmv(\alpha)+(1+2f(\alpha))\tgtv(\alpha)}{\sqrt{f_{\alpha}(\alpha)^{2}+(1+2f(\alpha))^{2}}}\biggr).
\end{align}
Therefore,
\begin{align}
	&\gmat_0=\gmat(0)=-\nrmv(\alpha), \quad \gmat_L(f)=\frac{d}{d\epsilon}[\gmat(\epsilon f)]\mid_{\epsilon=0} =f_{\alpha\alpha}(\alpha) \nrmv(\alpha)+f_\alpha(\alpha) \tgtv(\alpha). \label{gmatLcalc}
\end{align}
Since \(\fmat=2A_{\gamma}\fmat^{\gamma}\), it follows that
\begin{align*}
	\fmat_{0}(\alpha)&\eqdef \fmat(0)=2A_{\gamma}\gmat_{0}=-2A_{\gamma}\nrmv(\alpha), \\
	\fmat_{L}(f) &\eqdef \frac{d}{d\epsilon}[\fmat(\epsilon f)]\mid_{\epsilon=0}=
	2 A_\gamma \gmat_{L}(f)=2A_{\gamma}(f_{\alpha\alpha} \nrmv(\alpha)+f_\alpha(\alpha) \tgtv(\alpha))=2A_{\gamma}\partial_{\alpha}(f_{\alpha}(\alpha)\nrmv(\alpha)).
\end{align*}

\section{The zeroth-order system}
Setting \(f=0\) in \eqref{kernelUSE}, we obtain
\begin{align}
	\kerk_{0}=&\nrmv(\alpha) \cdot \biggl(\biggl(\frac{-1}{\lambda^{2}R^{2}\abs{2\sin(\beta/2)}^{2}}+\frac{K_{1}(\lambda R\abs{2\sin(\beta/2)})}{\lambda R\abs{2\sin(\beta/2)}}+K_{0}(\lambda R\abs{2\sin(\beta/2)})\biggr)I \nonumber \\
	&+\biggl(\frac{2}{\lambda^{2}R^{2}\abs{2\sin(\beta/2)}^{2}}-K_{2}(\lambda R\abs{2\sin(\beta/2)})\biggr) \nonumber \\
	&\biggl(\sin^{2}(\beta/2)\nrmv(\alpha) \otimes \nrmv(\alpha)-\sin(\beta/2)\cos(\beta/2)\nrmv(\alpha) \otimes \tgtv(\alpha) \nonumber \\
	&-\sin(\beta/2)\cos(\beta/2)\tgtv(\alpha)\otimes \nrmv(\alpha)+\cos^{2}(\beta/2)\tgtv(\alpha) \otimes \tgtv(\alpha)\biggr)\biggr). \label{K0}
\end{align}
It follows that
\begin{align}
	&\int_{\mathbb{T}}\kerk_{0}\fmat_{0}(\alpha+\beta)d\beta \notag \\
	=&\int_{\mathbb{T}}\biggl(\frac{-1}{\lambda^{2}R^{2}\abs{2\sin(\beta/2)}^{2}}+\frac{K_{1}(\lambda R\abs{2\sin(\beta/2)})}{\lambda R\abs{2\sin(\beta/2)}}+K_{0}(\lambda R\abs{2\sin(\beta/2)})\biggr)\nrmv(\alpha) \cdot\fmat_{0}(\alpha+\beta)d\beta \label{int1} \\
	&+\int_{\mathbb{T}}\biggl(\frac{2}{\lambda^{2}R^{2}\abs{2\sin(\beta/2)}^{2}}-K_{2}(\lambda R\abs{2\sin(\beta/2)})\biggr) \sin^{2}(\beta/2)\nrmv(\alpha)\cdot\fmat_{0}(\alpha+\beta)d\beta \label{int2} \\
	&-\int_{\mathbb{T}}\biggl(\frac{2}{\lambda^{2}R^{2}\abs{2\sin(\beta/2)}^{2}}-K_{2}(\lambda R\abs{2\sin(\beta/2)})\biggr)\sin(\beta/2)\cos(\beta/2)\tgtv(\alpha)\cdot\fmat_{0}(\alpha+\beta)d\beta. \label{int3}
\end{align}
In fact, this entire integral vanishes because the combined integrands in \eqref{int1}--\eqref{int3} constitute a total derivative with respect to \(\beta\). Consequently, \(f=0\) is a stationary solution to the evolution equation \eqref{NEWequation}.

\section{Linearization of the Equation for \texorpdfstring{\(f\)}{f}}
The linearization of the evolution equation \eqref{NEWequation} about the steady state \(f=0\) is given by
\begin{align}
	f_{t}(\alpha,t) = &\frac{1}{2\pi R} \int_{\T}\kerk_{0}(\alpha,\beta) \fmat_{L}(\alpha +\beta) d\beta+\frac{1}{2\pi R} \int_{\T}\kerk_{L}(\alpha,\beta) \fmat_{0}(\alpha +\beta) d\beta-\frac{\dot{\bm{C}}(t)\cdot \nrmv(\alpha)}{R}. \label{linevolf}
\end{align}

\subsection{The first term} \label{firstterm}
Using \eqref{K0}, we obtain
\begin{align*}
	&\int_{\mathbb{T}}\kerk_{0}\fmat_{L}(\alpha+\beta)d\beta \\
	=&\int_{\mathbb{T}} \biggl(\biggl(\frac{-1}{\lambda^{2}R^{2}\abs{2\sin(\beta/2)}^{2}}+\frac{K_{1}(\lambda R\abs{2\sin(\beta/2)})}{\lambda R\abs{2\sin(\beta/2)}}+K_{0}(\lambda R\abs{2\sin(\beta/2)})\biggr)\nrmv(\alpha) \cdot\fmat_{L}(\alpha+\beta) \\
	&+\biggl(\frac{2}{\lambda^{2}R^{2}\abs{2\sin(\beta/2)}^{2}}-K_{2}(\lambda R\abs{2\sin(\beta/2)})\biggr) \\
	&\biggl(\sin^{2}(\beta/2)\nrmv(\alpha) \cdot\nrmv(\alpha) \otimes \nrmv(\alpha)\fmat_{L}(\alpha+\beta)-\sin(\beta/2)\cos(\beta/2)\nrmv(\alpha) \cdot\nrmv(\alpha) \otimes \tgtv(\alpha)\fmat_{L}(\alpha+\beta) \\
	&-\sin(\beta/2)\cos(\beta/2)\nrmv(\alpha) \cdot\tgtv(\alpha)\otimes \nrmv(\alpha)\fmat_{L}(\alpha+\beta)+\cos^{2}(\beta/2)\nrmv(\alpha) \cdot\tgtv(\alpha) \otimes \tgtv(\alpha)\fmat_{L}(\alpha+\beta)\biggr)\biggr)d\beta.
\end{align*}

\subsection{The second term} \label{secondterm}
From \eqref{kernelKexp}, we obtain
\begin{align*}
    \kerk_{L}=&\biggl[\nrmv(\alpha) \cdot \biggl(\biggl(\frac{-1}{\lambda^{2}R^{2}\abs{2\sin(\beta/2)}^{2}\mathcal{Z}(f)}+\frac{K_{1}(\lambda R\abs{2\sin(\beta/2)}\mathcal{Z}(f)^{\frac{1}{2}})}{\lambda R\abs{2\sin(\beta/2)}\mathcal{Z}(f)^{\frac{1}{2}}} \\
    &+K_{0}(\lambda R\abs{2\sin(\beta/2)}\mathcal{Z}(f)^{\frac{1}{2}})\biggr)I \\
    &+\biggl(\frac{1}{R^{2}\mathcal{Z}(f)}\frac{2}{\lambda^{2}R^{2}\abs{2\sin(\beta/2)}^{2}\mathcal{Z}(f)}-\frac{K_{2}(\lambda R\abs{2\sin(\beta/2)}\mathcal{Z}(f)^{\frac{1}{2}})}{R^{2}\mathcal{Z}(f)}\biggr) \\
    &\biggl(R^{2}\compt(f)^{2}\nrmv(\alpha) \otimes \nrmv(\alpha)+R^{2}\compt(f)\mompt(f)\nrmv(\alpha) \otimes \tgtv(\alpha) \\
    &+R^{2}\compt(f)\mompt(f)\tgtv(\alpha)\otimes \nrmv(\alpha)+R^{2}\mompt(f)^{2}\tgtv(\alpha) \otimes \tgtv(\alpha)\biggr)\biggr)\biggr]_{L} \\
    &+f(\alpha) \nrmv(\alpha) \cdot \biggl(\biggl(\frac{-1}{\lambda^{2}R^{2}\abs{2\sin(\beta/2)}^{2}}+\frac{K_{1}(\lambda R\abs{2\sin(\beta/2)})}{\lambda R\abs{2\sin(\beta/2)}}+K_{0}(\lambda R\abs{2\sin(\beta/2)})\biggr)I \\
    &+\biggl(\frac{2}{\lambda^{2}R^{2}\abs{2\sin(\beta/2)}^{2}}-K_{2}(\lambda R\abs{2\sin(\beta/2)})\biggr) \\
    &\biggl(\sin^{2}(\beta/2)\nrmv(\alpha) \otimes \nrmv(\alpha)-\sin(\beta/2)\cos(\beta/2)\nrmv(\alpha) \otimes \tgtv(\alpha) \\
    &-\sin(\beta/2)\cos(\beta/2)\tgtv(\alpha)\otimes \nrmv(\alpha)+\cos^{2}(\beta/2)\tgtv(\alpha) \otimes \tgtv(\alpha)\biggr)\biggr).
\end{align*}
To simplify the first term in this expression, we compute that
\begin{align*}
    &\biggl[\frac{-1}{\lambda^{2}R^{2}\abs{2\sin(\beta/2)}^{2}\mathcal{Z}(f)}+\frac{K_{1}(\lambda R\abs{2\sin(\beta/2)}\mathcal{Z}(f)^{\frac{1}{2}})}{\lambda R\abs{2\sin(\beta/2)}\mathcal{Z}(f)^{\frac{1}{2}}}+K_{0}(\lambda R\abs{2\sin(\beta/2)}\mathcal{Z}(f)^{\frac{1}{2}})\biggr]_{L} \\
    =&\frac{-1}{\lambda^{2}R^{2}\abs{2\sin(\beta/2)}^{2}}\biggl[\frac{1}{\mathcal{Z}(f)}\biggr]_{L}+\biggl([K_{1}(\lambda R\abs{2\sin(\beta/2)}\mathcal{Z}(f)^{\frac{1}{2}})]_{0}\biggl[\frac{1}{\lambda R\abs{2\sin(\beta/2)}}\frac{1}{\mathcal{Z}(f)^{1/2}}\biggr]_{L} \\
    &+[K_{1}(\lambda R\abs{2\sin(\beta/2)}\mathcal{Z}(f)^{\frac{1}{2}}]_{L}\biggl[\frac{1}{\lambda R\abs{2\sin(\beta/2)}}\frac{1}{\mathcal{Z}(f)^{1/2}}\biggr]_{0}\biggr)+\biggl[K_{0}(\lambda R\abs{2\sin(\beta/2)}\mathcal{Z}(f)^{\frac{1}{2}})\biggr]_{L} \\
    =&\frac{\mathcal{Z}_{L}}{\lambda^{2}R^{2}\abs{2\sin(\beta/2)}^{2}}+\biggl(-\frac{1}{2}\frac{K_{1}(\lambda R\abs{2\sin(\beta/2)})}{\lambda R\abs{2\sin(\beta/2)}}\mathcal{Z}_{L} \\
    &+\frac{1}{4}\biggl(-K_{0}(\lambda R \abs{2\sin(\beta/2)})-K_{2}(\lambda R \abs{2\sin(\beta/2)})\biggr)\mathcal{Z}_{L}\biggr) \\
    &-\frac{1}{2}K_{1}(\lambda R\abs{2\sin(\beta/2)})\lambda R \abs{2\sin(\beta/2)}\mathcal{Z}_{L}
\end{align*}
and
\begin{align*}
    &\biggl[\biggl(\frac{1}{R^{2}\mathcal{Z}(f)}\frac{2}{\lambda^{2}R^{2}\abs{2\sin(\beta/2)}^{2}\mathcal{Z}(f)}-\frac{K_{2}(\lambda R\abs{2\sin(\beta/2)}\mathcal{Z}(f)^{\frac{1}{2}})}{R^{2}\mathcal{Z}(f)}\biggr) \\
    &\biggl(R^{2}\compt(f)^{2}\nrmv(\alpha) \otimes \nrmv(\alpha)+R^{2}\compt(f)\mompt(f)\nrmv(\alpha) \otimes \tgtv(\alpha) \\
    &+R^{2}\compt(f)\mompt(f)\tgtv(\alpha)\otimes \nrmv(\alpha)+R^{2}\mompt(f)^{2}\tgtv(\alpha) \otimes \tgtv(\alpha)\biggr)\biggr]_{L} \\
    =&\biggl(\compt_{0}^{2}\biggl[\frac{2}{\lambda^{2}R^{2}\abs{2\sin(\beta/2)}^{2}\mathcal{Z}(f)^{2}}-\frac{K_{2}(\lambda R\abs{2\sin(\beta/2)}\mathcal{Z}(f)^{\frac{1}{2}})}{\mathcal{Z}(f)}\biggr]_{L}
\end{align*}
\begin{align*}
    &+2\compt_{0}\compt_{L}(f)\biggl[\frac{2}{\lambda^{2}R^{2}\abs{2\sin(\beta/2)}^{2}\mathcal{Z}(f)^{2}}-\frac{K_{2}(\lambda R\abs{2\sin(\beta/2)}\mathcal{Z}(f)^{\frac{1}{2}})}{\mathcal{Z}(f)}\biggr]_{0}\biggr)\nrmv(\alpha) \otimes \nrmv(\alpha) \\
    &+\biggl(\compt_{0}\mompt_{0}\biggl[\frac{2}{\lambda^{2}R^{2}\abs{2\sin(\beta/2)}^{2}\mathcal{Z}(f)^{2}}-\frac{K_{2}(\lambda R\abs{2\sin(\beta/2)}\mathcal{Z}(f)^{\frac{1}{2}})}{\mathcal{Z}(f)}\biggr]_{L} \nonumber \\
    &+(\compt_{0}\mompt_{L}(f)+\compt_{L}(f)\mompt_{0})\biggl[\frac{2}{\lambda^{2}R^{2}\abs{2\sin(\beta/2)}^{2}\mathcal{Z}(f)^{2}}-\frac{K_{2}(\lambda R\abs{2\sin(\beta/2)}\mathcal{Z}(f)^{\frac{1}{2}})}{\mathcal{Z}(f)}\biggr]_{0}\biggr)\nrmv(\alpha) \otimes \tgtv(\alpha) \nonumber \\
    &+\biggl(\compt_{0}\mompt_{0}\biggl[\frac{2}{\lambda^{2}R^{2}\abs{2\sin(\beta/2)}^{2}\mathcal{Z}(f)^{2}}-\frac{K_{2}(\lambda R\abs{2\sin(\beta/2)}\mathcal{Z}(f)^{\frac{1}{2}})}{\mathcal{Z}(f)}\biggr]_{L} \nonumber \\
    &+(\compt_{0}\mompt_{L}(f)+\compt_{L}(f)\mompt_{0})\biggl[\frac{2}{\lambda^{2}R^{2}\abs{2\sin(\beta/2)}^{2}\mathcal{Z}(f)^{2}}-\frac{K_{2}(\lambda R\abs{2\sin(\beta/2)}\mathcal{Z}(f)^{\frac{1}{2}})}{\mathcal{Z}(f)}\biggr]_{0}\biggr)\tgtv(\alpha)\otimes \nrmv(\alpha) \nonumber \\
    &+\biggl(\mompt_{0}^{2}\biggl[\frac{2}{\lambda^{2}R^{2}\abs{2\sin(\beta/2)}^{2}\mathcal{Z}(f)^{2}}-\frac{K_{2}(\lambda R\abs{2\sin(\beta/2)}\mathcal{Z}(f)^{\frac{1}{2}})}{\mathcal{Z}(f)}\biggr]_{L} \nonumber \\
    &+2\mompt_{0}\mompt_{L}(f)\biggl[\frac{2}{\lambda^{2}R^{2}\abs{2\sin(\beta/2)}^{2}\mathcal{Z}(f)^{2}}-\frac{K_{2}(\lambda R\abs{2\sin(\beta/2)}\mathcal{Z}(f)^{\frac{1}{2}})}{\mathcal{Z}(f)}\biggr]_{0}\biggr)\tgtv(\alpha) \otimes \tgtv(\alpha) \nonumber \\
    =&\biggl(\sin^{2}(\beta/2)\biggl(\frac{-4\mathcal{Z}_{L}}{\lambda^{2} R^{2}\abs{2\sin(\beta/2)}^{2}}+\mathcal{Z}_{L}K_{2}(\lambda R\abs{2\sin(\beta/2)}) \nonumber \\
    &-\frac{1}{4}\biggl(-K_{1}(\lambda R \abs{2\sin(\beta/2)})-K_{3}(\lambda R \abs{2\sin(\beta/2)})\biggr)\lambda R \abs{2\sin(\beta/2)}\mathcal{Z}_{L}\biggr) \nonumber \\
    &-2\sin(\beta/2)(\cos(\beta)\DBL f(\alpha)-\sin(\beta/2)f(\alpha))\biggl(\frac{2}{\lambda^{2}R^{2}\abs{2\sin(\beta/2)}^{2}}-K_{2}(\lambda R\abs{2\sin(\beta/2)})\biggr)\biggr) \nonumber \\
    &\nrmv(\alpha) \otimes \nrmv(\alpha) \nonumber \\
    &+\biggl(-\sin(\beta/2)\cos(\beta/2)\biggl(\frac{-4\mathcal{Z}_{L}}{\lambda^{2} R^{2}\abs{2\sin(\beta/2)}^{2}}+\mathcal{Z}_{L}K_{2}(\lambda R \abs{2\sin(\beta/2)}) \nonumber \\
    &-\frac{1}{4}\biggl(-K_{1}(\lambda R \abs{2\sin(\beta/2)})-K_{3}(\lambda R \abs{2\sin(\beta/2)})\biggr)\lambda R \abs{2\sin(\beta/2)}\mathcal{Z}_{L}\biggr) \nonumber \\
    &+\biggl(-\sin(\beta/2)\cos(\beta/2)f(\alpha+\beta)+\cos(\beta/2)\biggl(\cos(\beta)\DBL f(\alpha)-\sin(\beta/2)f(\alpha)\biggr)\biggr) \nonumber \\
    &\biggl(\frac{2}{\lambda^{2}R^{2}\abs{2\sin(\beta/2)}^{2}}-K_{2}(\lambda R\abs{2\sin(\beta/2)})\biggr)\biggr)\nrmv(\alpha) \otimes \tgtv(\alpha) \nonumber \\
    &+\biggl(-\sin(\beta/2)\cos(\beta/2)\biggl(\frac{-4\mathcal{Z}_{L}}{\lambda^{2} R^{2}\abs{2\sin(\beta/2)}^{2}}+\mathcal{Z}_{L}K_{2}(\lambda R\abs{2\sin(\beta/2)}) \nonumber \\
    &-\frac{1}{4}\biggl(-K_{1}(\lambda R \abs{2\sin(\beta/2)})-K_{3}(\lambda R \abs{2\sin(\beta/2)})\biggr)\lambda R \abs{2\sin(\beta/2)}\mathcal{Z}_{L}\biggr) \nonumber \\
    &+\biggl(-\sin(\beta/2)\cos(\beta/2)f(\alpha+\beta)+\cos(\beta/2)\biggl(\cos(\beta)\DBL f(\alpha)-\sin(\beta/2)f(\alpha)\biggr)\biggr) \nonumber \\
    &\biggl(\frac{2}{\lambda^{2}R^{2}\abs{2\sin(\beta/2)}^{2}}-K_{2}(\lambda R\abs{2\sin(\beta/2)})\biggr)\biggr)\tgtv(\alpha)\otimes \nrmv(\alpha) \nonumber \\
    &+\biggl(\cos^{2}(\beta/2)\biggl(\frac{-4\mathcal{Z}_{L}}{\lambda^{2} R^{2}\abs{2\sin(\beta/2)}^{2}}+\mathcal{Z}_{L}K_{2}(\lambda R\abs{2\sin(\beta/2)}) \\
    &-\frac{1}{4}\biggl(-K_{1}(\lambda R \abs{2\sin(\beta/2)})-K_{3}(\lambda R \abs{2\sin(\beta/2)})\biggr)\lambda R \abs{2\sin(\beta/2)}\mathcal{Z}_{L}\biggr) \nonumber \\
    &+2\cos^{2}(\beta/2)f(\alpha+\beta)\biggl(\frac{2}{\lambda^{2}R^{2}\abs{2\sin(\beta/2)}^{2}}-K_{2}(\lambda R\abs{2\sin(\beta/2)})\biggr)\biggr)\tgtv(\alpha) \otimes \tgtv(\alpha). \nonumber
\end{align*}
Plugging these expressions into \(\kerk_{L}\), we obtain
\begin{align*}
    \kerk_{L}=&\biggl(\frac{1}{\lambda^{2}R^{2}\abs{2\sin(\beta/2)}^{2}}-\frac{1}{2}\frac{K_{1}(\lambda R\abs{2\sin(\beta/2)})}{\lambda R\abs{2\sin(\beta/2)}} \\
    &+\frac{1}{4}\biggl(-K_{0}(\lambda R \abs{2\sin(\beta/2)})-K_{2}(\lambda R \abs{2\sin(\beta/2)})\biggr) \\
    &-\frac{1}{2}K_{1}(\lambda R\abs{2\sin(\beta/2)})\lambda R \abs{2\sin(\beta/2)}\biggr)(f(\alpha)+f(\alpha+\beta))\nrmv(\alpha)\cdot I \\
    &+\biggl(\sin^{2}(\beta/2)\biggl(\frac{-4}{\lambda^{2} R^{2}\abs{2\sin(\beta/2)}^{2}}+K_{2}(\lambda R \abs{2\sin(\beta/2)}) \\
    &-\frac{1}{4}\biggl(-K_{1}(\lambda R \abs{2\sin(\beta/2)})-K_{3}(\lambda R \abs{2\sin(\beta/2)})\biggr)\lambda R \abs{2\sin(\beta/2)}\biggr)(f(\alpha)+f(\alpha+\beta)) \\
    &-2\sin(\beta/2)\biggl(\cos(\beta)\DBL f(\alpha)-\sin(\beta/2)f(\alpha)\biggr) \\
    &\biggl(\frac{2}{\lambda^{2}R^{2}\abs{2\sin(\beta/2)}^{2}}-K_{2}(\lambda R\abs{2\sin(\beta/2)})\biggr)\biggr)\nrmv(\alpha)\cdot\nrmv(\alpha) \otimes \nrmv(\alpha) \\
    &+\biggl(-\sin(\beta/2)\cos(\beta/2)\biggl(\frac{-4}{\lambda^{2} R^{2}\abs{2\sin(\beta/2)}^{2}}+K_{2}(\lambda R \abs{2\sin(\beta/2)}) \\
    &-\frac{1}{4}\biggl(-K_{1}(\lambda R \abs{2\sin(\beta/2)})-K_{3}(\lambda R \abs{2\sin(\beta/2)})\biggr)\lambda R \abs{2\sin(\beta/2)}\biggr)(f(\alpha)+f(\alpha+\beta)) \\
    &+\biggl(-\sin(\beta/2)\cos(\beta/2)f(\alpha+\beta)+\cos(\beta/2)\biggl(\cos(\beta)\DBL f(\alpha)-\sin(\beta/2)f(\alpha)\biggr)\biggr) \\
    &\biggl(\frac{2}{\lambda^{2}R^{2}\abs{2\sin(\beta/2)}^{2}}-K_{2}(\lambda R\abs{2\sin(\beta/2)})\biggr)\biggr)\nrmv(\alpha)\cdot\nrmv(\alpha) \otimes \tgtv(\alpha) \\
    &+\biggl(-\sin(\beta/2)\cos(\beta/2)\biggl(\frac{-4}{\lambda^{2} R^{2}\abs{2\sin(\beta/2)}^{2}}+K_{2}(\lambda R\abs{2\sin(\beta/2)}) \\
    &-\frac{1}{4}\biggl(-K_{1}(\lambda R \abs{2\sin(\beta/2)})-K_{3}(\lambda R \abs{2\sin(\beta/2)})\biggr)\lambda R \abs{2\sin(\beta/2)}\biggr)(f(\alpha)+f(\alpha+\beta)) \\
    &+\biggl(-\sin(\beta/2)\cos(\beta/2)f(\alpha+\beta)+\cos(\beta/2)\biggl(\cos(\beta)\DBL f(\alpha)-\sin(\beta/2)f(\alpha)\biggr)\biggr) \\
    &\biggl(\frac{2}{\lambda^{2}R^{2}\abs{2\sin(\beta/2)}^{2}}-K_{2}(\lambda R\abs{2\sin(\beta/2)})\biggr)\biggr)\nrmv(\alpha)\cdot\tgtv(\alpha)\otimes \nrmv(\alpha) \\
    &+\biggl(\cos^{2}(\beta/2)\biggl(\frac{-4}{\lambda^{2} R^{2}\abs{2\sin(\beta/2)}^{2}}+K_{2}(\lambda R\abs{2\sin(\beta/2)}) \\
    &-\frac{1}{4}\biggl(-K_{1}(\lambda R \abs{2\sin(\beta/2)})-K_{3}(\lambda R \abs{2\sin(\beta/2)})\biggr)\lambda R \abs{2\sin(\beta/2)}\biggr)(f(\alpha)+f(\alpha+\beta)) \\
    &+2\cos^{2}(\beta/2)f(\alpha+\beta)\biggl(\frac{2}{\lambda^{2}R^{2}\abs{2\sin(\beta/2)}^{2}}-K_{2}(\lambda R\abs{2\sin(\beta/2)})\biggr)\biggr)\nrmv(\alpha)\cdot\tgtv(\alpha) \otimes \tgtv(\alpha) \\
    &+f(\alpha)\biggl(\frac{-1}{\lambda^{2}R^{2}\abs{2\sin(\beta/2)}^{2}}+\frac{K_{1}(\lambda R\abs{2\sin(\beta/2)})}{\lambda R\abs{2\sin(\beta/2)}}+K_{0}(\lambda R\abs{2\sin(\beta/2)})\biggr)\nrmv(\alpha)\cdot I \\
    &+f(\alpha)\biggl(\frac{2}{\lambda^{2}R^{2}\abs{2\sin(\beta/2)}^{2}}-K_{2}(\lambda R\abs{2\sin(\beta/2)})\biggr) \\
    &\biggl(\sin^{2}(\beta/2)\nrmv(\alpha)\cdot\nrmv(\alpha) \otimes \nrmv(\alpha)-\sin(\beta/2)\cos(\beta/2)\nrmv(\alpha)\cdot\nrmv(\alpha) \otimes \tgtv(\alpha) \\
    &-\sin(\beta/2)\cos(\beta/2)\nrmv(\alpha)\cdot\tgtv(\alpha)\otimes \nrmv(\alpha)+\cos^{2}(\beta/2)\nrmv(\alpha)\cdot\tgtv(\alpha) \otimes \tgtv(\alpha)\biggr).
\end{align*}
It follows that
\begin{align*}
    &\int_{\mathbb{T}}\kerk_{L}\fmat_{0}(\alpha+\beta)d\beta=-2A_{\gamma}\int_{\mathbb{T}}\kerk_{L}\nrmv(\alpha+\beta)d\beta \\
    =&-2A_{\gamma}\biggl(f(\alpha)\int_{\mathbb{T}}\biggl(\frac{1}{\lambda^{2}R^{2}\abs{2\sin(\beta/2)}^{2}}-\frac{1}{2}\frac{K_{1}(\lambda R\abs{2\sin(\beta/2)})}{\lambda R\abs{2\sin(\beta/2)}} -\frac{1}{4}K_{0}(\lambda R \abs{2\sin(\beta/2)}) \\
    &-\frac{1}{4}K_{2}(\lambda R \abs{2\sin(\beta/2)})-\frac{\lambda R \abs{2\sin(\beta/2)}}{2}K_{1}(\lambda R\abs{2\sin(\beta/2)})\biggr)\cos\beta d\beta \\
    &+\int_{\mathbb{T}}\biggl(\frac{1}{\lambda^{2}R^{2}\abs{2\sin(\beta/2)}^{2}}-\frac{1}{2}\frac{K_{1}(\lambda R\abs{2\sin(\beta/2)})}{\lambda R\abs{2\sin(\beta/2)}} -\frac{1}{4}K_{0}(\lambda R \abs{2\sin(\beta/2)})-\frac{1}{4}K_{2}(\lambda R \abs{2\sin(\beta/2)}) \\
    &-\frac{\lambda R \abs{2\sin(\beta/2)}}{2}K_{1}(\lambda R\abs{2\sin(\beta/2)})\biggr)f(\alpha+\beta)\cos\beta d\beta \\
    &+f(\alpha)\int_{\mathbb{T}}\sin^{2}(\beta/2)\cos\beta\biggl(\frac{-4}{\lambda^{2} R^{2}\abs{2\sin(\beta/2)}^{2}}+K_{2}(\lambda R\abs{2\sin(\beta/2)}) \\
    &+\frac{\lambda R \abs{2\sin(\beta/2)}}{4}\biggl(K_{1}(\lambda R \abs{2\sin(\beta/2)})+K_{3}(\lambda R \abs{2\sin(\beta/2)})\biggr)\biggr) d\beta \\
    &+\int_{\mathbb{T}}\sin^{2}(\beta/2)\cos\beta\biggl(\frac{-4}{\lambda^{2} R^{2}\abs{2\sin(\beta/2)}^{2}}+K_{2}(\lambda R\abs{2\sin(\beta/2)}) \\
    &+\frac{\lambda R \abs{2\sin(\beta/2)}}{4}\biggl(K_{1}(\lambda R \abs{2\sin(\beta/2)})+K_{3}(\lambda R \abs{2\sin(\beta/2)})\biggr)\biggr)f(\alpha+\beta)d\beta \\
    &+\int_{\mathbb{T}} -\sin^{2}(\beta/2)(2+\cos\beta)f(\alpha+\beta)\biggl(\frac{2}{\lambda^{2}R^{2}\abs{2\sin(\beta/2)}^{2}}-K_{2}(\lambda R\abs{2\sin(\beta/2)})\biggr)d\beta \\
    &-\frac{1}{2}f(\alpha)\int_{\mathbb{T}}\sin^{2}\beta\biggl(\frac{-4}{\lambda^{2} R^{2}\abs{2\sin(\beta/2)}^{2}}+K_{2}(\lambda R\abs{2\sin(\beta/2)}) \\
    &+\frac{\lambda R \abs{2\sin(\beta/2)}}{4}\biggl(K_{1}(\lambda R \abs{2\sin(\beta/2)})+K_{3}(\lambda R \abs{2\sin(\beta/2)})\biggr)\biggr) d\beta \\
    &-\frac{1}{2}\int_{\mathbb{T}}\sin^{2}\beta\biggl(\frac{-4}{\lambda^{2} R^{2}\abs{2\sin(\beta/2)}^{2}}+K_{2}(\lambda R\abs{2\sin(\beta/2)}) \\
    &+\frac{\lambda R \abs{2\sin(\beta/2)}}{4}\biggl(K_{1}(\lambda R \abs{2\sin(\beta/2)})+K_{3}(\lambda R \abs{2\sin(\beta/2)})\biggr)\biggr) f(\alpha+\beta)d\beta \\
    &+f(\alpha)\int_{\mathbb{T}}\biggl(\frac{-1}{\lambda^{2}R^{2}\abs{2\sin(\beta/2)}^{2}}+\frac{K_{1}(\lambda R\abs{2\sin(\beta/2)})}{\lambda R\abs{2\sin(\beta/2)}}+K_{0}(\lambda R\abs{2\sin(\beta/2)})\biggr)\cos\beta d\beta \\
    &-f(\alpha)\int_{\mathbb{T}}(1-\cos\beta)\biggl(\frac{2}{\lambda^{2}R^{2}\abs{2\sin(\beta/2)}^{2}}-K_{2}(\lambda R\abs{2\sin(\beta/2)})\biggr) d\beta\biggr).
\end{align*}

\subsection{Further simplification}
We substitute the expressions derived in Sections \ref{firstterm} and \ref{secondterm} into \eqref{linevolf} and add
\begin{align*}
	0=-\frac{1}{2\pi R}f(\alpha)\int_{\mathbb{T}}\kerk_{0}\fmat_{0}(\alpha+\beta)d\beta
\end{align*}
to the right hand side of \eqref{linevolf} to obtain
\begin{align}
    &2\pi R f_{t}(\alpha,t) \label{feqnlin} \\
    =&\int_{\mathbb{T}} \biggl(\biggl(\frac{-1}{\lambda^{2}R^{2}\abs{2\sin(\beta/2)}^{2}}+\frac{K_{1}(\lambda R\abs{2\sin(\beta/2)})}{\lambda R\abs{2\sin(\beta/2)}}+K_{0}(\lambda R\abs{2\sin(\beta/2)})\biggr)\nrmv(\alpha) \cdot\fmat_{L}(\alpha+\beta)
\end{align}
\begin{align}
    &+\biggl(\frac{2}{\lambda^{2}R^{2}\abs{2\sin(\beta/2)}^{2}}-K_{2}(\lambda R\abs{2\sin(\beta/2)})\biggr) \nonumber \\
    &\biggl(\sin^{2}(\beta/2)\nrmv(\alpha) \cdot\nrmv(\alpha) \otimes \nrmv(\alpha)\fmat_{L}(\alpha+\beta)-\sin(\beta/2)\cos(\beta/2)\nrmv(\alpha) \cdot\nrmv(\alpha) \otimes \tgtv(\alpha)\fmat_{L}(\alpha+\beta) \nonumber \\
    &-\sin(\beta/2)\cos(\beta/2)\nrmv(\alpha) \cdot\tgtv(\alpha)\otimes \nrmv(\alpha)\fmat_{L}(\alpha+\beta) \\
    &+\cos^{2}(\beta/2)\nrmv(\alpha) \cdot\tgtv(\alpha) \otimes \tgtv(\alpha)\fmat_{L}(\alpha+\beta)\biggr)\biggr)d\beta \nonumber \\
    &-2A_{\gamma}\biggl(\int_{\mathbb{T}}\biggl(\frac{1}{\lambda^{2}R^{2}\abs{2\sin(\beta/2)}^{2}}-\frac{1}{2}\frac{K_{1}(\lambda R\abs{2\sin(\beta/2)})}{\lambda R\abs{2\sin(\beta/2)}} -\frac{1}{4}K_{0}(\lambda R \abs{2\sin(\beta/2)}) \\
    &-\frac{1}{4}K_{2}(\lambda R \abs{2\sin(\beta/2)})-\frac{\lambda R \abs{2\sin(\beta/2)}}{2}K_{1}(\lambda R\abs{2\sin(\beta/2)})\biggr)\cos\beta f(\alpha)d\beta \nonumber \\
    &+\int_{\mathbb{T}}\biggl(\frac{1}{\lambda^{2}R^{2}\abs{2\sin(\beta/2)}^{2}}-\frac{1}{2}\frac{K_{1}(\lambda R\abs{2\sin(\beta/2)})}{\lambda R\abs{2\sin(\beta/2)}} -\frac{1}{4}K_{0}(\lambda R \abs{2\sin(\beta/2)}) \\
    &-\frac{1}{4}K_{2}(\lambda R \abs{2\sin(\beta/2)})-\frac{\lambda R \abs{2\sin(\beta/2)}}{2}K_{1}(\lambda R\abs{2\sin(\beta/2)})\biggr)\cos\beta f(\alpha+\beta) d\beta \nonumber \\
    &+\int_{\mathbb{T}}-\sin^{2}(\beta/2)\biggl(\frac{-4}{\lambda^{2} R^{2}\abs{2\sin(\beta/2)}^{2}}+K_{2}(\lambda R\abs{2\sin(\beta/2)}) \nonumber \\
    &+\frac{\lambda R \abs{2\sin(\beta/2)}}{4}\biggl(K_{1}(\lambda R \abs{2\sin(\beta/2)})+K_{3}(\lambda R \abs{2\sin(\beta/2)})\biggr)\biggr)f(\alpha) d\beta \nonumber \\
    &+\int_{\mathbb{T}}-\sin^{2}(\beta/2)\biggl(\frac{-4}{\lambda^{2} R^{2}\abs{2\sin(\beta/2)}^{2}}+K_{2}(\lambda R\abs{2\sin(\beta/2)}) \nonumber \\
    &+\frac{\lambda R \abs{2\sin(\beta/2)}}{4}\biggl(K_{1}(\lambda R \abs{2\sin(\beta/2)})+K_{3}(\lambda R \abs{2\sin(\beta/2)})\biggr)\biggr) f(\alpha+\beta)d\beta \nonumber \\
    &+\int_{\mathbb{T}} -\sin^{2}(\beta/2)(2+\cos\beta)\biggl(\frac{2}{\lambda^{2}R^{2}\abs{2\sin(\beta/2)}^{2}}-K_{2}(\lambda R\abs{2\sin(\beta/2)})\biggr)f(\alpha+\beta)d\beta \nonumber \\
    &+\int_{\mathbb{T}}-\sin^{2}(\beta/2)\biggl(\frac{2}{\lambda^{2}R^{2}\abs{2\sin(\beta/2)}^{2}}-K_{2}(\lambda R\abs{2\sin(\beta/2)})\biggr)f(\alpha)d\beta\biggr)-2\pi\dot{\bm{C}}(t)\cdot \nrmv(\alpha). \nonumber 
\end{align}
We dedicate the rest of this section to simplifying this expression. To begin, we draw our attention to the first term in \eqref{feqnlin}:
\begin{align*}
    \int_{\mathbb{T}}\biggl(\frac{-1}{\lambda^{2}R^{2}\abs{2\sin(\beta/2)}^{2}}+\frac{K_{1}(\lambda R\abs{2\sin(\beta/2)})}{\lambda R\abs{2\sin(\beta/2)}}+K_{0}(\lambda R\abs{2\sin(\beta/2)})\biggr)\nrmv(\alpha) \cdot\fmat_{L}(\alpha+\beta)d\beta.
\end{align*}
For \(x=\lambda R\abs{2\sin(\beta/2)}\) sufficiently small,
\begin{align*}
    &\frac{-1}{\lambda^{2}R^{2}\abs{2\sin(\beta/2)}^{2}}+\frac{K_{1}(\lambda R\abs{2\sin(\beta/2)})}{\lambda R\abs{2\sin(\beta/2)}}+K_{0}(\lambda R\abs{2\sin(\beta/2)}) \\
    =&\frac{1}{4}(-1-2\gamma_{E}+2\log 2-2\log x)+\frac{1}{64}(11-12\gamma_{E}+12\log 2-12\log x)x^{2}+\mathcal{O}(x^{4}\abs{\log x}),
\end{align*}
where \(\gamma_{E}\) is the Euler-Mascheroni constant. Isolating the contribution from the \(-\frac{1}{2}\log x\) term in the expansion, we see that
\begin{align*}
    &-\int_{\mathbb{T}}\frac{1}{2}\log(\lambda R \abs{2\sin(\beta/2)})\nrmv(\alpha)\cdot \fmat_{L}(\alpha+\beta)d\beta=\frac{A_{\gamma}}{2}\biggl(-2\pi\mathcal{H}(f_{\alpha})(\alpha)+\int_{\T}\cos\beta f(\alpha+\beta)d\beta\biggr).
\end{align*}
In view of this observation, we write
\begin{align*}
	&\int_{\mathbb{T}}\biggl(\frac{-1}{\lambda^{2}R^{2}\abs{2\sin(\beta/2)}^{2}}+\frac{K_{1}(\lambda R\abs{2\sin(\beta/2)})}{\lambda R\abs{2\sin(\beta/2)}}+K_{0}(\lambda R\abs{2\sin(\beta/2)})\biggr)\nrmv(\alpha)\cdot\fmat_{L}(\alpha+\beta)d\beta \\
	=&-\int_{\mathbb{T}}\partial_{\beta}\biggl[\frac{-1}{\lambda^{2}R^{2}\abs{2\sin(\beta/2)}^{2}}+\frac{K_{1}(\lambda R\abs{2\sin(\beta/2)})}{\lambda R\abs{2\sin(\beta/2)}}+K_{0}(\lambda R\abs{2\sin(\beta/2)})\biggr] \\
    &2A_{\gamma}f_{\beta}(\alpha+\beta)\cos\beta d\beta \\
	=&-A_{\gamma}\pi\mathcal{H}(f_{\alpha})(\alpha) \\
	&-\int_{\mathbb{T}}\biggl(\frac{A_{\gamma}}{2\tan(\beta/2)}+2A_{\gamma}\cos\beta\biggl[\frac{\lambda^{2}R^{2}4\sin(\beta/2)\cos(\beta/2)}{(\lambda^{2}R^{2}4\sin^{2}(\beta/2))^{2}} \\
	&-\frac{1}{2}(K_{0}(\lambda R(4\sin^{2}(\beta/2))^{1/2})+K_{2}(\lambda R(4\sin^{2}(\beta/2))^{1/2})) \\
    &\frac{2\lambda R}{(4\sin^{2}(\beta/2))^{1/2}}\frac{\sin(\beta/2)\cos(\beta/2)}{\lambda R(4\sin^{2}(\beta/2))^{1/2}} \\
	&-\frac{K_{1}(\lambda R(4\sin^{2}(\beta/2))^{1/2})}{(\lambda R(4\sin^{2}(\beta/2))^{1/2})^{2}}\frac{2\lambda R}{(4\sin^{2}(\beta/2))^{1/2}}\sin(\beta/2)\cos(\beta/2) \\
	&-K_{1}(\lambda R(4\sin^{2}(\beta/2))^{1/2})\frac{2\lambda R}{(4\sin^{2}(\beta/2))^{1/2}}\sin(\beta/2)\cos(\beta/2)\biggr]\biggr)f_{\beta}(\alpha+\beta)d\beta.
\end{align*}
Next, we can simplify the second term in \eqref{feqnlin} to
\begin{align*}
	&\int_{\mathbb{T}}\biggl(\frac{2}{\lambda^{2}R^{2}\abs{2\sin(\beta/2)}^{2}}-K_{2}(\lambda R\abs{2\sin(\beta/2)})\biggr) \\
	&(\sin^{2}(\beta/2)\nrmv(\alpha)\cdot\nrmv(\alpha)\otimes \nrmv(\alpha)\fmat_{L}(\alpha+\beta) \\
    &-\sin(\beta/2)\cos(\beta/2)\nrmv(\alpha)\cdot\nrmv(\alpha)\otimes \tgtv(\alpha)\fmat_{L}(\alpha+\beta) \\
	&-\sin(\beta/2)\cos(\beta/2)\nrmv(\alpha)\cdot\tgtv(\alpha)\otimes \nrmv(\alpha)\fmat_{L}(\alpha+\beta) \\
    &+\cos^{2}(\beta/2)\nrmv(\alpha)\cdot\tgtv(\alpha)\otimes\tgtv(\alpha)\fmat_{L}(\alpha+\beta))d\beta \\
	=&-\int_{\mathbb{T}}\partial_{\beta}\biggl[\biggl(\frac{2}{\lambda^{2}R^{2}\abs{2\sin(\beta/2)}^{2}}-K_{2}(\lambda R\abs{2\sin(\beta/2)})\biggr)\sin^{2}(\beta/2)\\
    &(\cos^{3}\alpha+\cos\alpha\sin^{2}\alpha)2A_{\gamma}\biggr]f_{\beta}(\alpha+\beta)\cos(\alpha+\beta)d\beta \\
	&-\int_{\mathbb{T}}\partial_{\beta}\biggl[\biggl(\frac{2}{\lambda^{2}R^{2}\abs{2\sin(\beta/2)}^{2}}-K_{2}(\lambda R\abs{2\sin(\beta/2)})\biggr)\sin^{2}(\beta/2) \\
    &(\cos^{2}\alpha\sin\alpha+\sin^{3}\alpha)2A_{\gamma}\biggr]f_{\beta}(\alpha+\beta)\sin(\alpha+\beta)d\beta \\
	&-\int_{\mathbb{T}}\partial_{\beta}\biggl[\biggl(\frac{2}{\lambda^{2}R^{2}\abs{2\sin(\beta/2)}^{2}}-K_{2}(\lambda R\abs{2\sin(\beta/2)})\biggr)\sin(\beta/2)\cos(\beta/2) \\
    &(\cos^{2}\alpha\sin\alpha+\sin^{3}\alpha)2A_{\gamma}\biggr]f_{\beta}(\alpha+\beta)\cos(\alpha+\beta)d\beta \\
	&+\int_{\mathbb{T}}\partial_{\beta}\biggl[\biggl(\frac{2}{\lambda^{2}R^{2}\abs{2\sin(\beta/2)}^{2}}-K_{2}(\lambda R\abs{2\sin(\beta/2)})\biggr)\sin(\beta/2)\cos(\beta/2)\\
    &(\cos^{3}\alpha+\cos\alpha\sin^{2}\alpha)2A_{\gamma}\biggr]f_{\beta}(\alpha+\beta)\sin(\alpha+\beta)d\beta.
\end{align*}
Substituting these expressions into \eqref{feqnlin} and taking the Fourier transform, we find that for each \(k \in \mathbb{Z}\), the \(k\)th Fourier mode \(\mathcal{F}(f)(k)\) of \(f\) satisfies the evolution equation
\begin{align*}
	&2\pi R\partial_{t}\mathcal{F}(f)(k,t) \\
	=&-4A_{\gamma}\int_{\mathbb{T}}\partial_{\beta}\biggl[\biggl(\frac{2}{\lambda^{2}R^{2}\abs{2\sin(\beta/2)}^{2}}-K_{2}(\lambda R\abs{2\sin(\beta/2)})\biggr)\sin^{2}(\beta/2)\biggr] \nonumber \\
    &\frac{1}{4}(e^{-i(-k+1)\beta}+e^{-i(-k-1)\beta})d\beta \mathcal{F}(f_{\beta})(k) \nonumber \\
	&-4A_{\gamma}\int_{\mathbb{T}}\partial_{\beta}\biggl[\biggl(\frac{2}{\lambda^{2}R^{2}\abs{2\sin(\beta/2)}^{2}}-K_{2}(\lambda R\abs{2\sin(\beta/2)})\biggr)\sin(\beta/2)\cos(\beta/2)\biggr] \nonumber \\
    &\frac{1}{4i}(e^{i(k-1)\beta}-e^{i(k+1)\beta})d\beta \mathcal{F}(f_{\beta})(k) \nonumber \\
	&-A_{\gamma}\int_{\mathbb{T}}\biggl(\frac{1}{2\tan(\beta/2)}+2\cos\beta\biggl[\frac{\lambda^{2}R^{2}4\sin(\beta/2)\cos(\beta/2)}{(\lambda^{2}R^{2}4\sin^{2}(\beta/2))^{2}} \nonumber \\
	&-\frac{1}{2}(K_{0}(\lambda R(4\sin^{2}(\beta/2))^{1/2})+K_{2}(\lambda R(4\sin^{2}(\beta/2))^{1/2})) \\
    &\frac{2\lambda R}{(4\sin^{2}(\beta/2))^{1/2}}\frac{\sin(\beta/2)\cos(\beta/2)}{\lambda R(4\sin^{2}(\beta/2))^{1/2}} \nonumber \\
	&-\frac{K_{1}(\lambda R(4\sin^{2}(\beta/2))^{1/2})}{(\lambda R(4\sin^{2}(\beta/2))^{1/2})^{2}}\frac{2\lambda R}{(4\sin^{2}(\beta/2))^{1/2}}\sin(\beta/2)\cos(\beta/2) \nonumber \\
	&-K_{1}(\lambda R(4\sin^{2}(\beta/2))^{1/2})\frac{2\lambda R}{(4\sin^{2}(\beta/2))^{1/2}}\sin(\beta/2)\cos(\beta/2)\biggr]\biggr)e^{ik\beta}d\beta\mathcal{F}(f_{\beta})(k) \nonumber \\
	&+\mathcal{F}(-A_{\gamma}\pi\mathcal{H}(f_{\alpha})(\alpha))(k) \nonumber \\
	&-2A_{\gamma}\biggl(\int_{\mathbb{T}}\biggl(\frac{1}{\lambda^{2}R^{2}\abs{2\sin(\beta/2)}^{2}}-\frac{1}{2}\frac{K_{1}(\lambda R\abs{2\sin(\beta/2)})}{\lambda R\abs{2\sin(\beta/2)}} -\frac{1}{4}K_{0}(\lambda R \abs{2\sin(\beta/2)}) \\
    &-\frac{1}{4}K_{2}(\lambda R \abs{2\sin(\beta/2)})-\frac{\lambda R \abs{2\sin(\beta/2)}}{2}K_{1}(\lambda R\abs{2\sin(\beta/2)})\biggr)\cos\beta d\beta \nonumber \\
	&+\int_{\mathbb{T}}\biggl(\frac{1}{\lambda^{2}R^{2}\abs{2\sin(\beta/2)}^{2}}-\frac{1}{2}\frac{K_{1}(\lambda R\abs{2\sin(\beta/2)})}{\lambda R\abs{2\sin(\beta/2)}} -\frac{1}{4}K_{0}(\lambda R \abs{2\sin(\beta/2)}) \\
    &-\frac{1}{4}K_{2}(\lambda R \abs{2\sin(\beta/2)})-\frac{\lambda R \abs{2\sin(\beta/2)}}{2}K_{1}(\lambda R\abs{2\sin(\beta/2)})\biggr)\cos\beta e^{ik\beta}d\beta \nonumber \\
	&+\int_{\mathbb{T}}-\sin^{2}(\beta/2)\biggl(\frac{-4}{\lambda^{2} R^{2}\abs{2\sin(\beta/2)}^{2}}+K_{2}(\lambda R\abs{2\sin(\beta/2)}) \nonumber \\
	&+\frac{\lambda R \abs{2\sin(\beta/2)}}{4}\biggl(K_{1}(\lambda R \abs{2\sin(\beta/2)})+K_{3}(\lambda R \abs{2\sin(\beta/2)})\biggr)\biggr) d\beta \nonumber \\
	&+\int_{\mathbb{T}}-\sin^{2}(\beta/2)\biggl(\frac{-4}{\lambda^{2} R^{2}\abs{2\sin(\beta/2)}^{2}}+K_{2}(\lambda R\abs{2\sin(\beta/2)}) \nonumber \\
	&+\frac{\lambda R \abs{2\sin(\beta/2)}}{4}\biggl(K_{1}(\lambda R \abs{2\sin(\beta/2)})+K_{3}(\lambda R \abs{2\sin(\beta/2)})\biggr)\biggr)e^{ik\beta}d\beta \nonumber \\
	&+\int_{\mathbb{T}} -\sin^{2}(\beta/2)(2+\cos\beta)\biggl(\frac{2}{\lambda^{2}R^{2}\abs{2\sin(\beta/2)}^{2}}-K_{2}(\lambda R\abs{2\sin(\beta/2)})\biggr)e^{ik\beta}d\beta \nonumber \\
	&+\int_{\mathbb{T}}-\sin^{2}(\beta/2)\biggl(\frac{2}{\lambda^{2}R^{2}\abs{2\sin(\beta/2)}^{2}}-K_{2}(\lambda R\abs{2\sin(\beta/2)})\biggr)d\beta\biggr)\mathcal{F}(f)(k) \\
	&-2\pi\biggl(\dot{C}_{1}(t)\frac{1}{2}(\delta_{1k}+\delta_{-1k})+\dot{C}_{2}(t)\frac{1}{2i}(\delta_{1k}-\delta_{-1k})\biggr), \nonumber
\end{align*}
where \(\bm{C}(t)\eqdef(C_{1}(t),C_{2}(t))^{T}\). For notational brevity, we express this equation as
\begin{align}
	\partial_{t}\mathcal{F}(f)(k,t)=&\frac{1}{2\pi R}\biggl[A_{\gamma}I_{1}(\lambda,R,k)ik\mathcal{F}(f)(k)-A_{\gamma}\pi\abs{k}\mathcal{F}(f)(k)-2A_{\gamma}I_{2}(\lambda,R,k)\mathcal{F}(f)(k)\biggr] \label{I1I2} \\
	&-\frac{1}{R}\biggl(\dot{C}_{1}(t)\frac{1}{2}(\delta_{1k}+\delta_{-1k})+\dot{C}_{2}(t)\frac{1}{2i}(\delta_{1k}-\delta_{-1k})\biggr) \notag,
\end{align}
where \(I_{1}\) and \(I_{2}\) are the integrals bounded uniformly in \(k\).

In Appendix \ref{appendA}, we establish that for all \(k \geq 2\),
\begin{align} \label{kgeq2}
	\pi+\text{Im}(I_{1})(\lambda,R,k)+\frac{2\text{Re}(I_{2})(\lambda,R,k)}{\abs{k}}>0.
\end{align}
In Appendix \ref{appendB}, we prove that
\begin{align} \label{keq1}
	\pi+\text{Im}(I_{1})(\lambda,R,1)+2\text{Re}(I_{2})(\lambda,R,1)<0.
\end{align}
Furthermore, an application of the Riemann-Lebesgue lemma yields
\begin{align} \label{rieleb}
	\lim_{k \to \infty}\text{Im}(I_{1})(\lambda,R,k)=0, \quad \lim_{k \to \infty}\text{Re}(I_{2})(\lambda,R,k)=0.
\end{align}
We note that
\begin{align*}
	&-\frac{1}{R}\biggl(\dot{C}_{1}(t)\frac{1}{2}(\delta_{1k}+\delta_{-1k})+\dot{C}_{2}(t)\frac{1}{2i}(\delta_{1k}-\delta_{-1k})\biggr) \\
	=&\frac{A_{\gamma}}{2\pi R}\frac{3}{2}(\pi+\text{Im}(I_{1})(\lambda,R,1)+2\text{Re}(I_{2})(\lambda,R,1))\mathcal{F}(f)(1)\delta_{1k} \\
	&+\frac{A_{\gamma}}{2\pi R}\frac{3}{2}(\pi+\text{Im}(I_{1})(\lambda,R,1)+2\text{Re}(I_{2})(\lambda,R,1))\mathcal{F}(f)(-1)\delta_{-1k}.
\end{align*}

\section{The Fixed Point Argument}
To establish the well-posedness of \eqref{NEWequation}, we must first specify the precise sense in which a function \(f\) is a solution. In this section, we formulate a notion of solution to \eqref{NEWequation} that is suitable for a contraction mapping argument based on Picard's theorem in an infinite dimensional Banach space setting.

Let \(\mathbb{P}: \mathcal{F}^{0,1}(\mathbb{T}) \to \dot{Z}^{0,1}(\mathbb{T})\) be the projection operator onto the mean-zero subspace, defined by \(\mathbb{P}(f)(\alpha)\eqdef f(\alpha)-\hat{f}(0)\). Under this projection, \eqref{NEWequation} is equivalent to the system of equations
\begin{align*}
    \hat{f}(0)_{t}&=\mathcal{F}(\mathcal{R}(f))(0), \quad (\mathbb{P}f)_{t}(\alpha)=\mathbb{P}(\mathcal{R}(f))(\alpha),
\end{align*}
where \(\mathcal{R}(f)\) denotes the right hand side of \eqref{NEWequation}.
\begin{lemma} \label{equivsys}
    For any \(T>0\) and \(\nu_{0}>0\), suppose that \(f \in C((0,T];\wiener{1}) \cap C^{1}((0,T];\mathcal{F}^{0,1}(\mathbb{T}))\).
    Then \(f\) satisfies \eqref{NEWequation} if and only if it satisfies
    \begin{align}
        \hat{f}(0,t)&=\hat{f}(0,0), \quad (\mathbb{P}f)_{t}(\alpha,t)=\mathbb{P}(\mathcal{R}(f))(\alpha,t). \label{f0idpft}
    \end{align}
\end{lemma}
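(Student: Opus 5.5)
The plan is to show that, for every admissible \(f\), the zeroth Fourier mode of the right-hand side vanishes: \(\mathcal{F}(\mathcal{R}(f))(0,t)=0\). Geometrically this is conservation of the area of \(\Omega_{1}(t)\). Once this identity is in hand, the equivalence in Lemma \ref{equivsys} reduces to bookkeeping with the projection \(\mathbb{P}\).

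\emph{Step 1 (area in terms of \(\hat f(0)\)).} Under the parametrization \eqref{parametrization}, the same polar computation as in the section on the initial data gives
\begin{align*}
|\Omega_{1}(t)| = \tfrac12\int_{\mathbb{T}} R^{2}(1+2f(\alpha,t))\,d\alpha = \pi R^{2} + 2\pi R^{2}\hat f(0,t).
\end{align*}
The pole \(\bm{C}(t)\) does not enter, since area is translation invariant. Hence \(\frac{d}{dt}|\Omega_1(t)| = 2\pi R^{2}\,\hat f(0)_t\) whenever \(f\in C^{1}((0,T];\mathcal{F}^{0,1}(\mathbb{T}))\).

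\emph{Step 2 (the area is conserved, so the mean of \(\mathcal{R}(f)\) is zero).} I would work with the normal kinematic condition \eqref{normalnoslip}. By the section ``Revisiting the no-slip condition'', under \eqref{zeroCOND} it yields exactly \eqref{NEWequation}. Equivalently, \(\mathcal{R}(f)\) is the expression obtained from \(\bm{u}\cdot z_\alpha^{\perp}\) via that computation. Integrating the identity \(R^{2}f_t = -(z_t\cdot z_\alpha^\perp)\) (after the cancellation of the tangential terms) over \(\mathbb{T}\) shows that
\begin{align*}
2\pi R^{2}\,\mathcal{F}(\mathcal{R}(f))(0) = -\int_{\mathbb{T}} \bm{u}(z(\alpha))\cdot z_\alpha(\alpha)^{\perp}\,d\alpha + \dot{\bm{C}}\cdot\!\int_{\mathbb{T}} z_\alpha^\perp\,d\alpha.
\end{align*}
The second term vanishes because \(z_\alpha^\perp\) is an exact derivative. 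The first term equals the outward flux \(\int_{\Gamma}\bm{u}\cdot\bm{\nu}\,ds\); note that \(-z_\alpha^\perp\) is the outward normal for a counter-clockwise curve. By the divergence theorem this flux is \(\int_{\Omega_1}\nabla\cdot\bm{u}=0\). Here I use two facts. First, the single layer potential \eqref{slp} with the Brinkman kernel \(\bm{G}\) is divergence free in \(\Omega_1\). Second, its interior trace coincides with the boundary integral in \eqref{NoSlipEQN}, since the representation has no jump across \(\Gamma\).

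\emph{Step 3 (equivalence).} Suppose \(f\) solves \eqref{NEWequation}. Then \(\hat f(0)_t=\mathcal{F}(\mathcal{R}(f))(0)=0\) on \((0,T]\), so \(\hat f(0,t)\) is constant. Continuity of \(\hat f(0,\cdot)\) at \(t=0\), inherited from the attainment of the initial datum \(f_0\) in \(\mathcal{F}^{0,1}\), identifies this constant as \(\hat f(0,0)\). Applying \(\mathbb{P}\) to \eqref{NEWequation} gives the second identity in \eqref{f0idpft}. Conversely, suppose \eqref{f0idpft} holds. Then \(\hat f(0)_t=0=\mathcal{F}(\mathcal{R}(f))(0)\) by Step 2. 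Adding this to \((\mathbb{P}f)_t=\mathbb{P}\mathcal{R}(f)\) recovers \(f_t=\mathcal{R}(f)\).

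\emph{Main obstacle.} The main obstacle is Step 2 at the stated regularity, \(f\in\wiener{1}\). At this regularity the curve is only \(C^{1}\), so the kernel has a logarithmic singularity and \(\bm{F}\) involves \(f_{\alpha\alpha}\). The divergence-theorem argument must therefore be justified, not just quoted. I would first prove \(\int_{\mathbb{T}}\bm{u}(z)\cdot z_\alpha^\perp\,d\alpha=0\) for smooth \(f\) (for instance trigonometric polynomials). For such \(f\) the interior limit of the layer potential is classical. Alternatively, one can verify the identity directly: write \(\nabla\cdot\bm{G}=0\) away from the diagonal, exchange the order of integration, and excise a small neighborhood of \(\beta=0\), whose contribution vanishes in the limit. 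I would then pass to general \(f\) by density, using continuity of \(f\mapsto\mathcal{F}(\mathcal{R}(f))(0)\) in the \(\wiener{1}\cap\wiener{2}\) topology, which is available for almost every \(t\) and then extended to all \(t\) by continuity in time.
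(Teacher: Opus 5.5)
Your proposal follows the paper's proof. You integrate the normal kinematic condition \eqref{normalnoslip} over \(\mathbb{T}\), identify \(\int_{\mathbb{T}}\bm{u}(z)\cdot z_{\alpha}^{\perp}\,d\alpha\) with minus the outward flux, and use \(\nabla\cdot\bm{u}=0\) in \(\Omega_{1}\) to get \(\mathcal{F}(\mathcal{R}(f))(0)=0\), which gives both directions at once. As in the paper, identifying \(\mathcal{R}(f)\) with the normal-flux expression relies on the constraint \eqref{zeroCOND}. Your treatment of the \(\dot{\bm{C}}\) term and of continuity at \(t=0\) spells out steps the paper leaves implicit. The density argument is unnecessary, since for \(t>0\) the weight \(e^{\nu(t)\abs{k}}\) with \(\nu(t)>0\) already makes \(f(t)\) real-analytic.
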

\begin{proof}
    For any \(T>0\) and \(\nu_{0}>0\), suppose that \(f \in C((0,T];\wiener{1}) \cap C^{1}((0,T];\mathcal{F}^{0,1}(\mathbb{T}))\). Upon integrating the left hand side of \eqref{normalnoslip} with respect to \(\alpha\), we obtain
    \begin{align*}
    	\int_{-\pi}^{\pi}z_{t}(\alpha,t) \cdot z_{\alpha}(\alpha,t)^{\perp}d\alpha=-R^{2}\int_{-\pi}^{\pi}f_{t}(\alpha,t)d\alpha.
    \end{align*}
    Upon integrating the right hand side of \eqref{normalnoslip} with respect to \(\alpha\), we obtain
    \begin{align} \label{incomp}
    	\int_{-\pi}^{\pi}\bm{u}(z(\alpha,t),t)\cdot z_{\alpha}(\alpha,t)^{\perp}d\alpha=-\int_{\Gamma}\bm{u} \cdot \bm{\nu}ds=-\int_{\Omega_{1}}\nabla \cdot \bm{u}dx=0.
    \end{align}
    If \(f\) is a solution to \eqref{NEWequation}, then
    \begin{align*}
        -R^{2}\int_{\mathbb{T}}f_{t}(\alpha,t)d\alpha=0,
    \end{align*}
    which implies that for any \(t \in (0,T]\), \(\hat{f}(0,t)=\hat{f}(0,0)\), as needed. If \eqref{f0idpft} holds, then due to \eqref{incomp}, it is clear that \(\mathcal{F}(\mathcal{R}(f))(0)=0\), which completes the proof.
\end{proof}

The evolution equation \eqref{NEWequation} can be decomposed as
\begin{align}
    f_{t}(\alpha)=\mathcal{L}(f)(\alpha)+\mathcal{N}(f)(\alpha), \label{feqnln}
\end{align}
where \(\mathcal{L}\) represents the principal linear operator and \(\mathcal{N}\) gathers the remaining nonlinear terms. Applying the projection \(\mathbb{P}\) to both sides, we obtain \((\mathbb{P}f)_{t}(\alpha)=\mathbb{P}\mathcal{L}(f)(\alpha)+\mathbb{P}\mathcal{N}(f)(\alpha)=\mathcal{L}(\mathbb{P}f)(\alpha)+\mathbb{P}\tilde{\mathcal{N}}(\mathbb{P}f)(\alpha)\), where \(\tilde{\mathcal{N}}(\mathbb{P}f)=\mathcal{N}(\mathbb{P}f+\hat{f}(0))\).

For a sufficiently small \(\nu_{0}>0\), let \(\mathfrak{B}(\dot{\mathcal{F}}_{\nu}^{1,1})\) denote the space of bounded linear operators on \(\wiener{1}\). We define the linear solution operator \(e^{\cdot \mathcal{L}}: [0,\infty) \to \mathfrak{B}(\dot{\mathcal{F}}_{\nu}^{1,1})\) in the frequency space via 
\begin{align*}
	\mathcal{F}(e^{t\mathcal{L}}g)(k)\eqdef e^{\frac{A_{\gamma}}{2\pi R}\biggl(-\pi\abs{k}+I_{1}(\lambda,R,k)ik-2I_{2}(\lambda,R,k)+(\delta_{1k}+\delta_{-1k})\frac{3}{2}(\pi+\text{Im}(I_{1})(\lambda,R,1)+2\text{Re}(I_{2})(\lambda,R,1))\biggr)t}\mathcal{F}(g)(k)
\end{align*}
for \(k \in \mathbb{Z}\). It is straightforward to verify that \(e^{\cdot \mathcal{L}}\) is a \(C^{0}\)-semigroup generated by \(\mathcal{L}\).
\begin{definition} \label{mildsoln}
	A mild solution to \eqref{NEWequation} with initial data \(f(0)=f_{0}\) is a function \(f \in C([0,T];\wiener{1})\) for some \(T>0\) and \(\nu_{0}>0\) satisfying the first equation in \eqref{f0idpft} and
	\begin{align*}
		f(0)&=f_{0}\in \mathcal{F}^{1,1}(\mathbb{T}), \quad f(t)=e^{t\mathcal{L}}f_{0}+\int_{0}^{t}e^{(t-s)\mathcal{L}}\mathbb{P}\tilde{\mathcal{N}}(\mathbb{P}f)(s)ds.
	\end{align*}
\end{definition}
In the remainder of the paper, we present a contraction mapping argument to obtain a mild solution to \eqref{NEWequation} with \(f(0)=f_{0}\). This formulation is equivalent to finding a fixed point for the initial value problem in the infinite dimensional Banach space setting:
\begin{align}
	f(t)&=\mathcal{T}(\mathbb{P}f)(t) \label{ode} \\
	f(0)&=f_{0}\in \mathcal{F}^{1,1}(\mathbb{T}), \label{init}
\end{align}
where the nonlinear solution operator \(\mathcal{T}\) is defined by
\begin{align}
	\mathcal{T}(\mathbb{P}f)(t)\eqdef e^{t\mathcal{L}}f_{0}+\int_{0}^{t}e^{(t-s)\mathcal{L}}\mathbb{P}\tilde{\mathcal{N}}(\mathbb{P}f)(s)ds. \label{contraction}
\end{align}
\begin{lemma} \label{estimates}
	Fix \(T>0\), \(\nu_{0}>0\), \(\mu>0\), \(\gamma>0\), \(\lambda>0\), and \(R>0\). For any \(g: [0,T] \to \wiener{1}\) such that \(g(s)\) is real-valued for each \(s \in [0,T]\), let
	\begin{align}
		F(g)(\alpha,t)\eqdef\int_{0}^{t}e^{(t-s)\mathcal{L}}g(\alpha,s)ds. \label{operf}
	\end{align}
	If
	\begin{align}
		\nu_{0}<&\min\biggl[\inf_{k \geq 2}\biggl[\frac{A_{\gamma}}{2\pi R}\biggl(\pi+\text{Im}(I_{1})(\lambda,R,k)+\frac{2\text{Re}(I_{2})(\lambda,R,k)}{\abs{k}}\biggr)\biggr], \label{nuthreshold} \\
		&-\frac{A_{\gamma}}{2\pi R}\frac{1}{2}(\pi+\text{Im}(I_{1})(\lambda,R,1)+2\text{Re}(I_{2})(\lambda,R,1))\biggr], \notag
	\end{align}
	then for \(0 \leq t \leq T\),
	\begin{align*}
		&\norms{F(g)(\cdot,t)}{1} \leq \int_{0}^{t}\norms{g(\cdot,\tau)}{1}d\tau, \\
		&\int_{0}^{t}\norms{F(g)(\cdot,\tau)}{2}d\tau \lesssim_{\nu_{0}, A_{\gamma}, \lambda, R} \int_{0}^{t}\norms{g(\cdot,\tau)}{1}d\tau,
	\end{align*}
	where \(\lesssim_{\nu_{0}, A_{\gamma}, \lambda, R}\) denotes an inequality up to multiplication by a constant that depends on \(\nu_{0}\), \(A_{\gamma}\), \(R\), and \(\lambda\).
\end{lemma}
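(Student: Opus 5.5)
The proof works entirely in frequency space, one Fourier mode at a time. For $k\neq 0$ we write the symbol of $\mathcal{L}$ as $\frac{A_\gamma}{2\pi R}\,m(k)$, where
\begin{align*}
m(k)\eqdef -\pi\abs{k}+I_{1}(\lambda,R,k)ik-2I_{2}(\lambda,R,k)+(\delta_{1k}+\delta_{-1k})\tfrac{3}{2}\bigl(\pi+\text{Im}(I_{1})(\lambda,R,1)+2\text{Re}(I_{2})(\lambda,R,1)\bigr).
\end{align*}
Only the real part of the symbol affects $\abs{\mathcal{F}(e^{t\mathcal{L}}g)(k)}$, and
\begin{align*}
\text{Re}\,m(k)=-\abs{k}\Bigl(\pi+\text{Im}(I_1)\tfrac{k}{\abs{k}}+\tfrac{2\text{Re}(I_2)}{\abs{k}}\Bigr)+\text{(the } k=\pm1 \text{ correction)}.
\end{align*}

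\textbf{Step 1: coercivity of the symbol.} We show there is $c_0>0$ with $-\frac{A_\gamma}{2\pi R}\text{Re}\,m(k)\ge c_0\abs{k}$ for every $k\neq0$, and that the threshold \eqref{nuthreshold} gives $c_0>\nu_0$. For $\abs{k}\ge2$ this is \eqref{kgeq2}. Negative $k$ follow from $k\ge2$ because $g$ is real-valued: conjugating the $\beta$-integrals defining $I_1,I_2$ should give $\text{Im}(I_1)(-k)=-\text{Im}(I_1)(k)$ and $\text{Re}(I_2)(-k)=\text{Re}(I_2)(k)$, and we will verify these symmetries directly. For $\abs{k}=1$, write $Q=\pi+\text{Im}(I_1)(1)+2\text{Re}(I_2)(1)$, which is negative by \eqref{keq1}. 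The real part becomes $-Q+\frac32 Q=\frac12Q<0$, and \eqref{nuthreshold} says precisely that $-\frac{A_\gamma}{2\pi R}\frac12 Q>\nu_0$. The infimum over $k\ge2$ is strictly positive because of \eqref{kgeq2} together with \eqref{rieleb}: the quantity tends to $\frac{A_\gamma}{2}R^{-1}\cdot\frac{\pi}{\pi}\cdot$ a positive limit as $k\to\infty$, so positivity is not lost at infinity. Setting $\delta\eqdef c_0-\nu_0>0$, we conclude that $\abs{e^{t\frac{A_\gamma}{2\pi R}m(k)}}\le e^{-(\nu_0+\delta)\abs{k}t}$.

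\textbf{Step 2: handling the exponential weight.} Since $\nu$ is increasing and concave with $\nu'\le\nu_0$, we have $\nu(t)-\nu(s)\le\nu_0(t-s)$ for $s\le t$. Therefore
\begin{align*}
e^{\nu(t)\abs{k}}\abs{k}\,\abs{\widehat{F(g)}(k,t)}\le\int_0^t e^{-\delta\abs{k}(t-s)}\,e^{\nu(s)\abs{k}}\abs{k}\,\abs{\hat g(k,s)}\,ds.
\end{align*}
Summing over $k\neq0$ and bounding $e^{-\delta\abs{k}(t-s)}\le1$ gives the first inequality with constant exactly $1$.

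\textbf{Step 3: the $L^1_t\dot{\mathcal{F}}^{2,1}_\nu$ bound.} Multiply the display in Step 2 by $\abs{k}$, integrate in $t$ over $[0,T']$ with $T'\le t$, and use Fubini. The resulting time integral satisfies $\int_s^{T'}\abs{k}e^{-\delta\abs{k}(\tau-s)}d\tau\le1/\delta$, which gives
\begin{align*}
\int_0^t\norms{F(g)}{2}\,d\tau\le\delta^{-1}\int_0^t\norms{g}{1}\,d\tau,
\end{align*}
with $\delta$ depending on $\nu_0,A_\gamma,\lambda,R$.

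The main obstacle is Step 1. It requires the uniform strict gap and the sign conventions of $I_1,I_2$ for negative $k$, and both must be checked against the Appendix results and \eqref{rieleb}. Steps 2 and 3 are routine once $\delta>0$ is secured.
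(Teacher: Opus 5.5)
Your proposal is correct and follows essentially the same route as the paper: mode-by-mode bounds on the symbol using the threshold \eqref{nuthreshold} (with real part $\tfrac12 Q$ at $k=\pm1$), the inequality $\nu(t)-\nu(s)\le\nu_0(t-s)$ to absorb the exponential weight, and Fubini with $\int_s^t\abs{k}e^{-\delta\abs{k}(\tau-s)}d\tau\le\delta^{-1}$ for the $L^1_t\dot{\mathcal{F}}^{2,1}_\nu$ bound. The differences are cosmetic. You use a single gap $\delta$ rather than separate constants for $k=1$ and $k\ge2$. You also treat $k<0$ through the conjugation symmetry $I_j(-k)=\overline{I_j(k)}$, which comes from the real $\beta$-integrands and not from $g$ being real, whereas the paper writes the norms as $2\sum_{k\ge1}$ and uses the same symmetry implicitly.
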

\begin{proof}
	Fix \(T>0\), \(\nu_{0}>0\), \(\mu>0\), \(\gamma>0\), \(\lambda>0\), and \(R>0\). Suppose that \(g: [0,T] \to \wiener{1}\) is a function such that \(g(s)\) is real-valued for each \(s \in [0,T]\). Then for any \(t \in [0,T]\),
	\begin{align*}
		&\norms{F(g)(\cdot,t)}{1} \\
		=&2\sum_{k \geq 1}e^{\nu(t) \abs{k}}\abs{k}\abs{\mathcal{F}(F(g)(\cdot,t))(k)} \\
		\leq&2\int_{0}^{t}e^{\nu(t)-\nu(s)}e^{\nu(s)}e^{\frac{A_{\gamma}}{2\pi R}\frac{1}{2}(\pi+\text{Im}(I_{1})(\lambda,R,1)+2\text{Re}(I_{2})(\lambda,R,1))(t-s)}\abs{\mathcal{F}(g(\cdot,s))(1)}ds \\
		&+2\sum_{k \geq 2}\abs{k}\int_{0}^{t}e^{(\nu(t)-\nu(s)) \abs{k}}e^{\nu(s)\abs{k}}e^{-\frac{A_{\gamma}}{2\pi R}\abs{k}\biggl(\pi+\text{Im}(I_{1})(\lambda,R,k)+\frac{2\text{Re}(I_{2})(\lambda,R,k)}{\abs{k}}\biggr)(t-s)}\abs{\mathcal{F}(g(\cdot,s))(k)}ds.
	\end{align*}
	Choose \(\nu_{0}>0\) small enough that \eqref{nuthreshold} holds. Since \(\nu(t)-\nu(s) \leq \nu_{0}(t-s)\), we have
	\begin{align*}
		&e^{\nu(t)-\nu(s)}e^{\frac{A_{\gamma}}{2\pi R}\frac{1}{2}(\pi+\text{Im}(I_{1})(\lambda,R,1)+2\text{Re}(I_{2})(\lambda,R,1))(t-s)} \leq 1
	\end{align*}
	and for \(k \geq 2\),
	\begin{align*}
		&e^{(\nu(t)-\nu(s))\abs{k}}e^{-\frac{A_{\gamma}}{2\pi R}\abs{k}\biggl(\pi+\text{Im}(I_{1})(\lambda,R,k)+\frac{2\text{Re}(I_{2})(\lambda,R,k)}{\abs{k}}\biggr)(t-s)} \leq 1.
	\end{align*}
	It follows that
	\begin{align*}
		\norms{F(g)(\cdot,t)}{1}\leq&\int_{0}^{t}\norms{g(\cdot,s)}{1}ds.
	\end{align*}
	Similarly,
	\begin{align*}
		&\norms{F(g)(\cdot,\tau)}{2} \\
		=&2\int_{0}^{\tau}e^{\nu(\tau)-\nu(s)}e^{\frac{A_{\gamma}}{2\pi R}\frac{1}{2}(\pi+\text{Im}(I_{1})(\lambda,R,1)+2\text{Re}(I_{2})(\lambda,R,1))(\tau-s)}e^{\nu(s)}\abs{\mathcal{F}(g(\cdot,s))(1)}ds \\
		&+2\sum_{k \geq 2}\abs{k}^{2}\int_{0}^{\tau}e^{(\nu(\tau)-\nu(s)) \abs{k}}e^{-\frac{A_{\gamma}}{2\pi R}\abs{k}\biggl(\pi+\text{Im}(I_{1})(\lambda,R,k)+\frac{2\text{Re}(I_{2})(\lambda,R,k)}{\abs{k}}\biggr)(\tau-s)}e^{\nu(s)\abs{k}}\abs{\mathcal{F}(g(\cdot,s))(k)}ds.
	\end{align*}
	Since \(\nu(t)-\nu(s) \leq \nu_{0}(t-s)\), we have
	\begin{align*}
		&\int_{0}^{t}\norms{F(g)(\cdot,\tau)}{2}d\tau \\
		\leq&2\int_{0}^{t}\int_{0}^{\tau}e^{\nu(\tau)-\nu(s)}e^{\frac{A_{\gamma}}{2\pi R}\frac{1}{2}(\pi+\text{Im}(I_{1})(\lambda,R,1)+2\text{Re}(I_{2})(\lambda,R,1))(\tau-s)}e^{\nu(s)}\abs{\mathcal{F}(g(\cdot,s))(1)}dsd\tau \\
		&+2\sum_{k \geq 2}\int_{0}^{t}\abs{k}\int_{0}^{\tau}e^{(\nu(\tau)-\nu(s)) \abs{k}}e^{-\frac{A_{\gamma}}{2\pi R}\abs{k}\biggl(\pi+\text{Im}(I_{1})(\lambda,R,k)+\frac{2\text{Re}(I_{2})(\lambda,R,k)}{\abs{k}}\biggr)(\tau-s)}e^{\nu(s)\abs{k}}\abs{k}\abs{\mathcal{F}(g(\cdot,s))(k)}dsd\tau \\
		\leq&2\int_{0}^{t}e^{\nu(s)}\abs{\mathcal{F}(g(\cdot,s))(1)}\int_{s}^{t}e^{(\tau-s)\biggl(\nu_{0}+\frac{A_{\gamma}}{2\pi R}\frac{1}{2}(\pi+\text{Im}(I_{1})(\lambda,R,1)+2\text{Re}(I_{2})(\lambda,R,1))\biggr)}d\tau ds \\
		&+2\sum_{k \geq 2}\int_{0}^{t}e^{\nu(s)\abs{k}}\abs{k}\abs{\mathcal{F}(g(\cdot,s))(k)}\int_{s}^{t}\abs{k}e^{(\tau-s)\abs{k}\biggl(\nu_{0}-\frac{A_{\gamma}}{2\pi R}\biggl(\pi+\text{Im}(I_{1})(\lambda,R,k)+\frac{2\text{Re}(I_{2})(\lambda,R,k)}{\abs{k}}\biggr)\biggr)}d\tau ds.
	\end{align*}
	In view of
	\begin{align*}
		&\int_{s}^{t}e^{(\tau-s)\biggl(\nu_{0}+\frac{A_{\gamma}}{2\pi R}\frac{1}{2}\biggl(\pi+\text{Im}(I_{1})(\lambda,R,1)+2\text{Re}(I_{2})(\lambda,R,1)\biggr)\biggr)}d\tau \\
		=&\frac{1-e^{(t-s)\biggl(\nu_{0}+\frac{A_{\gamma}}{2\pi R}\frac{1}{2}\biggl(\pi+\text{Im}(I_{1})(\lambda,R,1)+2\text{Re}(I_{2})(\lambda,R,1)\biggr)\biggr)}}{-\nu_{0}-\frac{A_{\gamma}}{2\pi R}\frac{1}{2}\biggl(\pi+\text{Im}(I_{1})(\lambda,R,1)+2\text{Re}(I_{2})(\lambda,R,1)\biggr)}
	\end{align*}
	and
	\begin{align*}
		&\int_{s}^{t}\abs{k}e^{(\tau-s)\abs{k}\biggl(\nu_{0}-\frac{A_{\gamma}}{2\pi R}\biggl(\pi+\text{Im}(I_{1})(\lambda,R,k)+\frac{2\text{Re}(I_{2})(\lambda,R,k)}{\abs{k}}\biggr)\biggr)}d\tau \\
		=&\frac{1-e^{(t-s)\abs{k}\biggl(\nu_{0}-\frac{A_{\gamma}}{2\pi R}\biggl(\pi+\text{Im}(I_{1})(\lambda,R,k)+\frac{2\text{Re}(I_{2})(\lambda,R,k)}{\abs{k}}\biggr)\biggr)}}{\frac{A_{\gamma}}{2\pi R}\biggl(\pi+\text{Im}(I_{1})(\lambda,R,k)+\frac{2\text{Re}(I_{2})(\lambda,R,k)}{\abs{k}}\biggr)-\nu_{0}},
	\end{align*} 
	we obtain
	\begin{align*}
		&\int_{0}^{t}\norms{F(g)(\cdot,\tau)}{2}d\tau \\
		\leq&\frac{1}{-\nu_{0}-\frac{A_{\gamma}}{2\pi R}\frac{1}{2}\biggl(\pi+\text{Im}(I_{1})(\lambda,R,1)+2\text{Re}(I_{2})(\lambda,R,1)\biggr)}2\int_{0}^{t}e^{\nu(s)}\abs{\mathcal{F}(g(\cdot,s))(1)}ds \\
		&+\frac{1}{\inf_{k \geq 2}\biggl[\frac{A_{\gamma}}{2\pi R}\biggl(\pi+\text{Im}(I_{1})(\lambda,R,k)+\frac{2\text{Re}(I_{2})(\lambda,R,k)}{\abs{k}}\biggr)\biggr]-\nu_{0}}2\sum_{k \geq 2}\int_{0}^{t}e^{\nu(s)\abs{k}}\abs{k}\abs{\mathcal{F}(g(\cdot,s))(k)}ds \\
		\leq&\biggl(\frac{1}{-\nu_{0}-\frac{A_{\gamma}}{2\pi R}\frac{1}{2}\biggl(\pi+\text{Im}(I_{1})(\lambda,R,1)+2\text{Re}(I_{2})(\lambda,R,1)\biggr)} \\
		&+\frac{1}{\inf_{k \geq 2}\biggl[\frac{A_{\gamma}}{2\pi R}\biggl(\pi+\text{Im}(I_{1})(\lambda,R,k)+\frac{2\text{Re}(I_{2})(\lambda,R,k)}{\abs{k}}\biggr)\biggr]-\nu_{0}}\biggr)\int_{0}^{t}\norms{g(\cdot,s)}{1}ds.
	\end{align*}
\end{proof}
\begin{lemma} \label{estimates2}
	Fix \(T>0\), \(\nu_{0}>0\), \(\mu>0\), \(\gamma>0\), \(\lambda>0\), and \(R>0\). Suppose that \(f_{0}\) is a real-valued function in \(\dot{\mathcal{F}}^{1,1}(\mathbb{T})\). If \(\nu_{0}>0\) is small enough that \eqref{nuthreshold} holds, then
	\begin{align*}
		\int_{0}^{T}\norms{e^{t\mathcal{L}}f_{0}}{2}dt &\lesssim_{\nu_{0},A_{\gamma},\lambda,R}\norm{f_{0}}_{\dot{\mathcal{F}}^{1,1}}, \\
		\norms{e^{t\mathcal{L}}f_{0}}{1} &\leq \norm{f_{0}}_{\dot{\mathcal{F}}^{1,1}}, \quad 0 \leq t \leq T,
	\end{align*}
	where \(\lesssim_{\nu_{0},A_{\gamma},\lambda,R}\) denotes an inequality up to multiplication by a constant that depends on \(\nu_{0}\), \(A_{\gamma}\), \(\lambda\), and \(R\).
\end{lemma}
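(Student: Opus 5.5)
The plan is to argue mode by mode on the Fourier side, mirroring the proof of Lemma~\ref{estimates} but without the time convolution. Since \(f_{0}\) is real-valued, \(\widehat{f_{0}}(-k)=\overline{\widehat{f_{0}}(k)}\). The multiplier defining \(e^{t\mathcal{L}}\) has modulus depending only on its real part. We check that this real part is even in \(k\); this is already implicit in the factor \(2\sum_{k\geq 1}\) used in Lemma~\ref{estimates}. Hence
\begin{align*}
\norms{e^{t\mathcal{L}}f_{0}}{s}=2\sum_{k\geq 1}e^{\nu(t)k}k^{s}\abs{\mathcal{F}(e^{t\mathcal{L}}f_{0})(k)}, \quad s\in\{1,2\}.
\end{align*}

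For \(k=1\), the real part of the exponent equals \(\frac{A_{\gamma}}{2\pi R}\frac{1}{2}(\pi+\text{Im}(I_{1})(\lambda,R,1)+2\text{Re}(I_{2})(\lambda,R,1))t\). This comes from combining \(-\pi+\text{Im}(I_1)\cdot(-1)\)-type terms with the \(\frac{3}{2}\) correction, exactly as in the proof of Lemma~\ref{estimates}. By \eqref{keq1} this quantity is negative. For \(k\geq 2\), the real part is \(-\frac{A_{\gamma}}{2\pi R}k(\pi+\text{Im}(I_{1})(\lambda,R,k)+2\text{Re}(I_{2})(\lambda,R,k)/k)t\), which is negative by \eqref{kgeq2}.

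Using \(\nu(t)\leq\nu_{0}t\) (the case \(s=0\) of \(\nu(t)-\nu(s)\leq\nu_{0}(t-s)\)) and \eqref{nuthreshold}, we get two pointwise bounds. For \(k=1\),
\begin{align*}
e^{\nu(t)}\abs{\mathcal{F}(e^{t\mathcal{L}}f_{0})(1)}\leq e^{-c_{1}t}\abs{\widehat{f_{0}}(1)}, \quad c_{1}\eqdef-\nu_{0}-\tfrac{A_{\gamma}}{2\pi R}\tfrac{1}{2}\bigl(\pi+\text{Im}(I_{1})(\lambda,R,1)+2\text{Re}(I_{2})(\lambda,R,1)\bigr)>0 .
\end{align*}
For \(k\geq2\),
\begin{align*}
e^{\nu(t)k}\abs{\mathcal{F}(e^{t\mathcal{L}}f_{0})(k)}\leq e^{-c_{2}kt}\abs{\widehat{f_{0}}(k)},
\end{align*}
where \(c_{2}\) is the infimum in \eqref{nuthreshold} minus \(\nu_{0}\), so \(c_{2}>0\). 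Dropping the decaying factors and summing gives \(\norms{e^{t\mathcal{L}}f_{0}}{1}\leq 2\sum_{k\geq1}k\abs{\widehat{f_{0}}(k)}=\norm{f_{0}}_{\dot{\mathcal{F}}^{1,1}}\).

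For the \(L^{1}_{t}\) bound, we integrate the pointwise bounds in time and apply Tonelli. The \(k=1\) mode contributes at most \(2c_{1}^{-1}\abs{\widehat{f_{0}}(1)}\). For \(k\geq2\), we have \(\int_{0}^{T}k^{2}e^{-c_{2}kt}dt\leq k/c_{2}\). Summing yields
\begin{align*}
\int_{0}^{T}\norms{e^{t\mathcal{L}}f_{0}}{2}dt\leq (c_{1}^{-1}+c_{2}^{-1})\norm{f_{0}}_{\dot{\mathcal{F}}^{1,1}}.
\end{align*}
The constant depends only on \(\nu_{0},A_{\gamma},\lambda,R\) and is uniform in \(T\).

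The only delicate point is that the infimum over \(k\geq2\) in \eqref{nuthreshold} is strictly positive, so that \(c_{2}>0\) and \(\nu_{0}\) can actually be chosen. Each term is positive by \eqref{kgeq2}, and by \eqref{rieleb} the terms tend to \(\pi\). A positive sequence with a positive limit has a positive infimum, so this holds.
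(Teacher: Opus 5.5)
Your proof is correct and follows essentially the paper's argument: a mode-by-mode Fourier bound using \eqref{keq1}, \eqref{kgeq2}, the threshold \eqref{nuthreshold} and $\nu(t)\le\nu_0 t$. The only differences are minor. You absorb $e^{\nu(t)k}$ into the decaying exponential before integrating in time, whereas the paper integrates by parts in $t$ and then bounds the $\nu'(t)$ remainder; both routes give the same constants $c_1^{-1}+c_2^{-1}$. Your remark that the infimum in \eqref{nuthreshold} is strictly positive, via \eqref{rieleb}, makes explicit a point the paper leaves implicit.
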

\begin{proof}
	Fix \(T>0\), \(\nu_{0}>0\), \(\mu>0\), \(\gamma>0\), \(\lambda>0\), and \(R>0\). Suppose that \(f_{0}\) is a real-valued function in \(\dot{\mathcal{F}}^{1,1}(\mathbb{T})\). Let \(\nu_{0}>0\) be small enough that \eqref{nuthreshold} holds. Then
	\begin{align*}
		\norms{e^{t\mathcal{L}}f_{0}}{2} \leq& 2e^{\nu(t)}e^{\frac{A_{\gamma}}{2\pi R}\frac{1}{2}(\pi+\text{Im}(I_{1})(\lambda,R,1)+2\text{Re}(I_{2})(\lambda,R,1))t}\abs{\mathcal{F}(f_{0})(1)} \\
		&+2\sum_{k \geq 2}e^{\nu(t)\abs{k}}\abs{k}^{2}e^{\frac{A_{\gamma}}{2\pi R}\biggl(-\pi\abs{k}-\text{Im}(I_{1})(\lambda,R,k)k-2\text{Re}(I_{2})(\lambda,R,k)\biggr)t}\abs{\mathcal{F}(f_{0})(k)}.
	\end{align*}
	It follows that
	\begin{align*}
		&\int_{0}^{T}\norms{e^{t\mathcal{L}}f_{0}}{2}dt \\
		\leq&2\abs{\mathcal{F}(f_{0})(1)}\frac{1}{\frac{A_{\gamma}}{2\pi R}\frac{1}{2}(\pi+\text{Im}(I_{1})(\lambda,R,1)+2\text{Re}(I_{2})(\lambda,R,1))} \\
		&\biggl(e^{\nu(t)}e^{\frac{A_{\gamma}}{2\pi R}\frac{1}{2}(\pi+\text{Im}(I_{1})(\lambda,R,1)+2\text{Re}(I_{2})(\lambda,R,1))t} \mid_{t=0}^{t=T} \\
        &-\int_{0}^{T}\frac{\partial}{\partial t}(e^{\nu(t)})e^{\frac{A_{\gamma}}{2\pi R}\frac{1}{2}(\pi+\text{Im}(I_{1})(\lambda,R,1)+2\text{Re}(I_{2})(\lambda,R,1))t}dt\biggr) \\
		&+2\sum_{k \geq 2}\abs{k}\abs{\mathcal{F}(f_{0})(k)}\frac{1}{-\frac{A_{\gamma}}{2\pi R}\biggl(\pi+\text{Im}(I_{1})(\lambda,R,k)+\frac{2\text{Re}(I_{2})(\lambda,R,k)}{\abs{k}}\biggr)} \\
		&\biggl(e^{\nu(t)\abs{k}}e^{-\frac{A_{\gamma}}{2\pi R}\abs{k}\biggl(\pi+\text{Im}(I_{1})(\lambda,R,k)+\frac{2\text{Re}(I_{2})(\lambda,R,k)}{\abs{k}}\biggr)t} \mid_{t=0}^{t=T} \\
        &-\int_{0}^{T}\frac{\partial}{\partial t}(e^{\nu(t)\abs{k}})e^{-\frac{A_{\gamma}}{2\pi R}\abs{k}\biggl(\pi+\text{Im}(I_{1})(\lambda,R,k)+\frac{2\text{Re}(I_{2})(\lambda,R,k)}{\abs{k}}\biggr)t}dt\biggr).
	\end{align*}
	Since
	\begin{align*}
		&\int_{0}^{T}\frac{\partial}{\partial t}(e^{\nu(t)})e^{\frac{A_{\gamma}}{2\pi R}\frac{1}{2}(\pi+\text{Im}(I_{1})(\lambda,R,1)+2\text{Re}(I_{2})(\lambda,R,1))t}dt \\
		=&\int_{0}^{T}\frac{\nu_{0}}{(1+t)^{2}}e^{\nu(t)}e^{\frac{A_{\gamma}}{2\pi R}\frac{1}{2}(\pi+\text{Im}(I_{1})(\lambda,R,1)+2\text{Re}(I_{2})(\lambda,R,1))t}dt \\
		\leq&\int_{0}^{T}\nu_{0}e^{\biggl(\nu_{0}+\frac{A_{\gamma}}{2\pi R}\frac{1}{2}(\pi+\text{Im}(I_{1})(\lambda,R,1)+2\text{Re}(I_{2})(\lambda,R,1))\biggr)t}dt \\
		\leq&\frac{\nu_{0}}{-\biggl(\nu_{0}+\frac{A_{\gamma}}{2\pi R}\frac{1}{2}(\pi+\text{Im}(I_{1})(\lambda,R,1)+2\text{Re}(I_{2})(\lambda,R,1))\biggr)}
	\end{align*}
	and
	\begin{align*}
		&\int_{0}^{T}\frac{\partial}{\partial t}(e^{\nu(t)\abs{k}})e^{-\frac{A_{\gamma}}{2\pi R}\abs{k}\biggl(\pi+\text{Im}(I_{1})(\lambda,R,k)+\frac{2\text{Re}(I_{2})(\lambda,R,k)}{\abs{k}}\biggr)t}dt \\
		=&\int_{0}^{T}\frac{\nu_{0}}{(1+t)^{2}}\abs{k}e^{\nu(t)\abs{k}}e^{-\frac{A_{\gamma}}{2\pi R}\abs{k}\biggl(\pi+\text{Im}(I_{1})(\lambda,R,k)+\frac{2\text{Re}(I_{2})(\lambda,R,k)}{\abs{k}}\biggr)t}dt \\
		\leq&\int_{0}^{T}\nu_{0}\abs{k}e^{\biggl(\nu_{0}-\frac{A_{\gamma}}{2\pi R}\biggl(\pi+\text{Im}(I_{1})(\lambda,R,k)+\frac{2\text{Re}(I_{2})(\lambda,R,k)}{\abs{k}}\biggr)\biggr)\abs{k}t}dt \\
		\leq&\frac{\nu_{0}}{\inf_{k \geq 2}\biggl[\frac{A_{\gamma}}{2\pi R}\biggl(\pi+\text{Im}(I_{1})(\lambda,R,k)+\frac{2\text{Re}(I_{2})(\lambda,R,k)}{\abs{k}}\biggr)\biggr]-\nu_{0}},
	\end{align*}
	we obtain
	\begin{align*}
		&\int_{0}^{T}\norms{e^{t\mathcal{L}}f_{0}}{2}dt \\
		\leq&\biggl(\frac{1}{-\frac{A_{\gamma}}{2\pi R}\frac{1}{2}(\pi+\text{Im}(I_{1})(\lambda,R,1)+2\text{Re}(I_{2})(\lambda,R,1))} \\
		&\frac{\nu_{0}}{-\biggl(\nu_{0}+\frac{A_{\gamma}}{2\pi R}\frac{1}{2}(\pi+\text{Im}(I_{1})(\lambda,R,1)+2\text{Re}(I_{2})(\lambda,R,1))\biggr)} \\
		&+\frac{1}{\inf_{k \geq 2}\biggl[\frac{A_{\gamma}}{2\pi R}\biggl(\pi+\text{Im}(I_{1})(\lambda,R,k)+\frac{2\text{Re}(I_{2})(\lambda,R,k)}{\abs{k}}\biggr)\biggr]} \\
		&\frac{\nu_{0}}{\inf_{k \geq 2}\biggl[\frac{A_{\gamma}}{2\pi R}\biggl(\pi+\text{Im}(I_{1})(\lambda,R,k)+\frac{2\text{Re}(I_{2})(\lambda,R,k)}{\abs{k}}\biggr)\biggr]-\nu_{0}} \\
		&+\frac{1}{-\frac{A_{\gamma}}{2\pi R}\frac{1}{2}(\pi+\text{Im}(I_{1})(\lambda,R,1)+2\text{Re}(I_{2})(\lambda,R,1))} \\
		&+\frac{1}{\inf_{k \geq 2}\biggl[\frac{A_{\gamma}}{2\pi R}\biggl(\pi+\text{Im}(I_{1})(\lambda,R,k)+\frac{2\text{Re}(I_{2})(\lambda,R,k)}{\abs{k}}\biggr)\biggr]}\biggr)\norms[]{f_{0}}{1}.
	\end{align*}
	Next, for \(0 \leq t \leq T\),
	\begin{align*}
		\norms{e^{t\mathcal{L}}f_{0}}{1} =& 2e^{\nu(t)}e^{\frac{A_{\gamma}}{2\pi R}\frac{1}{2}(\pi+\text{Im}(I_{1})(\lambda,R,1)+2\text{Re}(I_{2})(\lambda,R,1))t}\abs{\mathcal{F}(f_{0})(1)} \\
		&+2\sum_{k \geq 2}e^{\nu(t)\abs{k}}\abs{k}e^{-\frac{A_{\gamma}}{2\pi R}\abs{k}\biggl(\pi+\text{Im}(I_{1})(\lambda,R,k)+\frac{2\text{Re}(I_{2})(\lambda,R,k)}{\abs{k}}\biggr)t}\abs{\mathcal{F}(f_{0})(k)} \\
		\leq&2e^{\biggl(\nu_{0}+\frac{A_{\gamma}}{2\pi R}\frac{1}{2}(\pi+\text{Im}(I_{1})(\lambda,R,1)+2\text{Re}(I_{2})(\lambda,R,1))\biggr)t}\abs{\mathcal{F}(f_{0})(1)} \\
		&+2\sum_{k \geq 2}\abs{k}e^{-\biggl(\inf_{k \geq 2}\biggl[\frac{A_{\gamma}}{2\pi R}\biggl(\pi+\text{Im}(I_{1})(\lambda,R,k)+\frac{2\text{Re}(I_{2})(\lambda,R,k)}{\abs{k}}\biggr)\biggr]-\nu_{0}\biggr)\abs{k}t}\abs{\mathcal{F}(f_{0})(k)} \\
		\leq&\norms[]{f_{0}}{1}.
	\end{align*}
\end{proof}
The following result summarizes the contraction mapping argument.
\begin{theorem} \label{fixedptthm}
	Let \(\mu>0\), \(\gamma>0\), \(\lambda>0\), and \(R>0\). Consider the integral equation (\ref{ode}) with the initial condition (\ref{init}). 
	There exist \(\nu_{0}=\nu_{0}(\mu,\gamma,\lambda,R)>0\) and \(\epsilon=\epsilon(\mu,\gamma,\lambda,R)>0\) such that for any \(T>0\), if \(\norm{f_{0}}_{\dot{\mathcal{F}}^{1,1}}\leq\epsilon\), then \(\mathcal{T}\) is a contraction on the closed ball
	\begin{align*}
		B_{\epsilon,T,\nu}(e^{t\mathcal{L}}f_{0}) \eqdef \{g \in X_{T,\nu}(\mathbb{T}) : \norm{g-e^{t\mathcal{L}}f_{0}}_{X_{T,\nu}} \leq \epsilon\}.
	\end{align*}
\end{theorem}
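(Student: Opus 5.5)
We will run the standard Banach fixed point argument on \(B_{\epsilon,T,\nu}(e^{t\mathcal{L}}f_{0})\), using Lemma \ref{estimates} and Lemma \ref{estimates2} for the linear part and a nonlinear estimate for \(\mathbb{P}\tilde{\mathcal{N}}\) in \(\wiener{1}\). Throughout, \(\nu_{0}\) is fixed once and for all so that \eqref{nuthreshold} holds; by \eqref{kgeq2}, \eqref{keq1} and \eqref{rieleb}, the infimum over \(k\ge 2\) is strictly positive and the \(k=1\) quantity is strictly negative, so such \(\nu_0\) exists and depends only on \(\mu,\gamma,\lambda,R\).

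\textbf{Step 1 (self-map).} For \(g\in B_{\epsilon,T,\nu}\), write \(\mathcal{T}(g)-e^{t\mathcal{L}}f_{0}=F(\mathbb{P}\tilde{\mathcal{N}}(g))\) with \(F\) as in \eqref{operf}. Lemma \ref{estimates} then gives
\begin{align*}
\norm{\mathcal{T}(g)-e^{t\mathcal{L}}f_{0}}_{X_{T,\nu}}\lesssim\int_{0}^{T}\norms{\mathbb{P}\tilde{\mathcal{N}}(g)(\cdot,\tau)}{1}d\tau .
\end{align*}
By Lemma \ref{estimates2} and the triangle inequality, \(\norm{g}_{X_{T,\nu}}\le C\epsilon\). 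It therefore suffices to prove the nonlinear bound
\begin{align*}
\int_{0}^{T}\norms{\mathbb{P}\tilde{\mathcal{N}}(g)}{1}d\tau\le P\bigl(\norm{g}_{L^\infty_T\dot{\mathcal F}^{1,1}_\nu}\bigr)\,\norm{g}_{L^\infty_T\dot{\mathcal F}^{1,1}_\nu}\int_{0}^{T}\norms{g}{2}d\tau ,
\end{align*}
where \(P\) is an explicit function that is bounded near \(0\). The right side is then \(\lesssim\epsilon^{2}\le\epsilon\) for \(\epsilon\) small. The constants are independent of \(T\), which is what makes the statement valid for every \(T>0\).

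\textbf{Step 2 (the nonlinear estimate, the main obstacle).} \(\mathcal{N}\) consists of all terms of \eqref{NEWequation} of at least quadratic order in \(f\). These are:
\begin{itemize}
\item the term \(\kerk_{0}\fmat_{N}\);
\item the cross terms \(\kerk_{L}(\fmat_{L}+\fmat_{N})\);
\item the term \(\kerk_{N}\fmat\).
\end{itemize}
Here \(\fmat_N\) contains \(\partial_\alpha\) of \(f_\alpha\) times functions of \((f,f_\alpha)\), and \(\kerk_N\) contains the expansions of \(\mathcal{Z}^{-1}\), \(\mathcal{Z}^{-1/2}\) and \(K_n(\lambda R|2\sin(\beta/2)|\mathcal{Z}^{1/2})\) around \(\mathcal{Z}_0=1\).

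I would expand every smooth nonlinearity (\((1+2f)^{1/2}\), \(\noCANCEL_j\), \(\mathcal{Z}^{-m}\), \((f_\alpha^2+(1+2f)^2)^{-1/2}\)) as a power series. Each of these converges in the algebra \(\dot{\mathcal F}^{0,1}_\nu\) when \(\norm{f}_{\mathcal F^{0,1}_\nu}\le\norms{f}{1}\) is small. Two facts then do the work:
\begin{itemize}
\item the weighted algebra property \(\norms{fg}{s}\le C_s(\norms{f}{s}\norm{g}_{\mathcal{F}^{0,1}_\nu}+\norm{f}_{\mathcal{F}^{0,1}_\nu}\norms{g}{s})\), which holds because \(e^{\nu|k|}\le e^{\nu|k-j|}e^{\nu|j|}\);
\item the fact that \(\DBTF\) has symbol \((e^{ik\beta}-1)/(2\sin(\beta/2))\), bounded by \(|k|\), so \(\DBTF\) behaves like one derivative of \(f\), uniformly in \(\beta\).
\end{itemize}

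The kernels: the singular part is at worst \(\log|\beta|\) (times \(\partial_\beta\) after integrating by parts, as in Section 9), or is a smooth bounded Bessel remainder. After pulling one \(\beta\)-derivative onto the kernel, each term in \(\mathcal{N}\) therefore has the form (kernel with integrable singularity) \(\times\) (product of \(f\), \(\DBB f\), \(\DBTF\)) \(\times\) (one copy of \(f_{\alpha\alpha}\) or \(\partial_\alpha\DBTF\)). Its \(\dot{\mathcal F}^{1,1}_\nu\) norm is then bounded by \(\norms{f}{2}\) times a power series in \(\norms{f}{1}\), with every monomial at least linear in \(\norms{f}{1}\).

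The delicate points are twice-differentiated factors such as \(\partial_\alpha\gmat_N\), and terms where a naive count gives two factors at the top regularity \(\dot{\mathcal{F}}^{2,1}\). There, interpolation \(\norms{f}{3/2}^2\le\norms{f}{1}\norms{f}{2}\) and the \(\log\)-singular kernel bound (Fourier multipliers of size \(\log|k|\), absorbed by \(|k|^{s}\) gaps) keep the estimate quadratic with one \(L^1_t\dot{\mathcal F}^{2,1}\) factor. The projection \(\mathbb{P}\), and the time-constant mean \(\hat f(0,0)\) fixed by Lemma \ref{equivsys}, are harmless: they only shift \(f\) by a constant bounded by \(\epsilon\).

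\textbf{Step 3 (contraction).} Apply the same expansion to the difference \(\tilde{\mathcal N}(g_1)-\tilde{\mathcal N}(g_2)\), writing each difference of products and of power series telescopically. This gives
\begin{align*}
\int_0^T\norms{\mathbb{P}\tilde{\mathcal{N}}(g_1)-\mathbb{P}\tilde{\mathcal{N}}(g_2)}{1}d\tau\lesssim\epsilon\norm{g_1-g_2}_{X_{T,\nu}}.
\end{align*}
Combining this with Lemma \ref{estimates} yields \(\norm{\mathcal{T}g_1-\mathcal{T}g_2}_{X_{T,\nu}}\le C\epsilon\norm{g_1-g_2}_{X_{T,\nu}}\le\tfrac12\norm{g_1-g_2}_{X_{T,\nu}}\) for small \(\epsilon\). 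Continuity in time into \(\dot Z^{1,1}_\nu\) follows from the \(C^0\)-semigroup property and dominated convergence on the Fourier side. This closes the argument.
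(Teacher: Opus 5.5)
Your Steps 1 and 3 match the paper's proof. You fix \(\nu_0\) by \eqref{nuthreshold}, bound \(\norm{e^{t\mathcal L}f_0}_{X_{T,\nu}}\le C_1\norm{f_0}_{\dot{\mathcal F}^{1,1}}\) by Lemma \ref{estimates2}, and bound the Duhamel term by \(C_2\int_0^T\norms{\mathcal N(f)}{1}\,ds\) via Lemma \ref{estimates}. You close with \(\int_0^T\norms{f}{1}\norms{f}{2}\,ds\le\norm{f}_{X_{T,\nu}}^2\le(1+C_1)^2\epsilon^2\), and the contraction has Lipschitz constant of order \((1+C_1)\epsilon\le\frac12\), with all constants uniform in \(T\). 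The paper does not derive the nonlinear input inside this proof. It quotes Lemma \ref{nonlinearestimate} and Lemma \ref{nonlinearestimatescont}, and its choice \(\epsilon\le\epsilon_i(1+C_1)^{-1}\) makes their smallness hypotheses hold. Citing those two lemmas would complete your argument.

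As a replacement for those lemmas, your Step 2 does not close. The target \(\norms{\mathcal N(f)}{1}\lesssim\norms{f}{1}\norms{f}{2}\) is scale-critical: the right side carries exactly the three derivatives the left side costs. So there is no \(\abs{k}^{s}\) gap to absorb a \(\log\abs{k}\). A logarithm on the top-frequency factor would require \(L^1_t\dot{\mathcal F}^{2+,1}_\nu\), which \(X_{T,\nu}\) does not control. The interpolation \(\norms{f}{3/2}^2\le\norms{f}{1}\norms{f}{2}\) only redistributes a fixed total of three derivatives. It cannot remove a logarithm, and it cannot repair a term with two \(\dot{\mathcal F}^{2,1}_\nu\) factors.

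The form you arrive at has the same problem. It is an integrable kernel, coupled to \(\alpha\) through \(D_\beta f(\alpha)\) inside \(\mathcal Z(f)\), \(\compt(f)\), \(\mompt(f)\), times a copy of \(f_{\alpha\alpha}(\alpha+\beta)\). If you bound the symbol of \(D_\beta f\) by \(\abs{k}\) uniformly in \(\beta\), this does not give back the derivative spent on \(f_{\alpha\alpha}\). The \(\alpha\)-derivative in the \(\dot{\mathcal F}^{1,1}_\nu\) norm then lands on \(f_{\alpha\alpha}\) and costs \(\dot{\mathcal F}^{3,1}_\nu\). Sharper bounds that integrate \(\abs{1/\sin(\beta/2)}\) over \(\abs{\beta}\gtrsim 1/\abs{k}\) still leave a factor \(\log\abs{k}\).

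The missing idea is the structure behind the paper's nonlinear estimates:
\begin{itemize}
\item Integrate by parts so that \(\fmat\) is replaced by \(z_\alpha/\abs{z_\alpha}\), which contains only \(f_\alpha\).
\item All the singularity then sits in \(\partial_\beta\kerk\): a \(\cot(\beta/2)\) principal-value part plus \(\partial_\beta\mathcal Z/(2\mathcal Z)\).
\item The Bessel corrections are desingularized by \(\abs{J_1(y)},\abs{I_1(y)},\abs{H_1(y)}\le Cy\).
\item The remaining \(1/\sin(\beta/2)\) integrals are evaluated exactly through \eqref{finitesum} and \eqref{pvsinint}, which are bounded uniformly in the frequencies, so no logarithm appears.
\end{itemize}

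Two smaller points. First, your list of nonlinear terms omits \(-\bigl((1+2f)^{1/2}-1\bigr)\dot{\bm C}(t)\cdot\nrmv(\alpha)/R\); this is harmless since \(\abs{\dot{\bm C}(t)}\lesssim\abs{\hat f(1)}+\abs{\hat f(-1)}\). Second, \(\norm{f_0}_{\dot{\mathcal F}^{1,1}}\le\epsilon\) says nothing about \(\hat f_0(0)\). Your power-series step needs \(\hat f_0(0)=0\), so that \(\norm{f}_{\mathcal F^{0,1}_\nu}\le\norms{f}{1}\) and \(1+2f\) stays positive. That comes from \eqref{f_cond} and is preserved in time by Lemma \ref{equivsys}.
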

\begin{proof}
	Let \(\mu>0\), \(\gamma>0\), \(\lambda>0\), and \(R>0\). We choose \(\nu_{0}>0\) sufficiently small that \eqref{nuthreshold} holds and let
	\begin{align*}
		\epsilon\eqdef&\min\biggl\{\epsilon_{1}(1+C_{1})^{-1},(C_{2} G(\epsilon_{1})(1+C_{1})^{2})^{-1}, \epsilon_{2}(1+C_{1})^{-1}, \frac{1}{2}(C_{3}H(\epsilon_{2},\epsilon_{2})2(1+C_{1}))^{-1}\biggr\},
	\end{align*}
	where \(\epsilon_{1}>0\) is the constant in Lemma \ref{nonlinearestimate} and \(\epsilon_{2}>0\) is the constant in Lemma \ref{nonlinearestimatescont}. We formally introduce the constants \(C_{1}=C_{1}(\nu_{0},A_{\gamma},R,\lambda)>0\), \(C_{2}=C_{2}(\nu_{0},A_{\gamma},R,\lambda)>0\), and \(C_{3}=C_{3}(\nu_{0},A_{\gamma},R,\lambda)>0\) later in the proof. Fix \(T>0\) and let \(f_{0}\in \mathcal{F}^{1,1}(\mathbb{T})\) such that \(\norm{f_{0}}_{\dot{\mathcal{F}}^{1,1}}\leq\epsilon\). First, we show that \(e^{t\mathcal{L}}f_{0} \in X_{T,\nu}\). In view of Lemma \ref{estimates2},
	\begin{align} \label{centerestimate}
		\norm{e^{t\mathcal{L}}f_{0}}_{X_{T,\nu}}=&\norm{e^{t\mathcal{L}}f_{0}}_{L^{\infty}(0,T;\wiener{1})} +\norm{e^{t\mathcal{L}}f_{0}}_{L^{1}(0,T;\wiener{2})} \\
		=&\sup_{0\leq t \leq T}\norms{e^{t\mathcal{L}}f_{0}}{1}+\int_{0}^{T}\norms{e^{t\mathcal{L}}f_{0}}{2}dt \leq C_{1}\norm{f_{0}}_{\dot{\mathcal{F}}^{1,1}} \nonumber
	\end{align}
	for some constant \(C_{1}=C_{1}(\nu_{0},A_{\gamma},\lambda,R)>0\). Next, we show that for any \(T>0\), \(\mathcal{T}\) maps the closed ball \(B_{\epsilon,T,\nu}(e^{t\mathcal{L}}f_{0})\) into itself. Let \(f \in B_{\epsilon,T,\nu}(e^{t\mathcal{L}}f_{0})\). Then
	\begin{align*}
		&\norm{\mathcal{T}(\mathbb{P}f)(t)-e^{t\mathcal{L}}f_{0}}_{X_{T,\nu}} \\
		=&\norm{\int_{0}^{t}e^{(t-s)\mathcal{L}}\mathbb{P}\tilde{N}(\mathbb{P}f)(\cdot,s)ds}_{L^{\infty}(0,T;\wiener{1})}+\norm{\int_{0}^{t}e^{(t-s)\mathcal{L}}\mathbb{P}\tilde{N}(\mathbb{P}f)(\cdot,s)ds}_{L^{1}(0,T;\wiener{2})} \\
		=&\sup_{0 \leq t \leq T}\norms{\int_{0}^{t}e^{(t-s)\mathcal{L}}\mathbb{P}\tilde{\mathcal{N}}(\mathbb{P}f)(\cdot,s)ds}{1}+\int_{0}^{T}\norms{\int_{0}^{t}e^{(t-s)\mathcal{L}}\mathbb{P}\tilde{\mathcal{N}}(\mathbb{P}f)(\cdot,s)ds}{2}dt \\
		=&\sup_{0 \leq t \leq T}\norms{F(\mathbb{P}\tilde{\mathcal{N}}(\mathbb{P}f))(\cdot,t)}{1}+\int_{0}^{T}\norms{F(\mathbb{P}\tilde{\mathcal{N}}(\mathbb{P}f))(\cdot,t)}{2}dt,
	\end{align*}
	where \(F\) is defined in (\ref{operf}). By Lemma \ref{estimates},
	\begin{align*}
		\norm{\mathcal{T}(\mathbb{P}f)(t)-e^{t\mathcal{L}}f_{0}}_{X_{T,\nu}}\leq&C_{2}\int_{0}^{T}\norms{\mathbb{P}\tilde{\mathcal{N}}(\mathbb{P}f)(\cdot,s)}{1}ds
	\end{align*}
	for some constant \(C_{2}=C_{2}(\nu_{0},A_{\gamma},\lambda,R)>0\). Since \(\tilde{\mathcal{N}}(\mathbb{P}f)=\mathcal{N}(f)\), we have
	\begin{align*}
		\norm{\mathcal{T}(\mathbb{P}f)(t)-e^{t\mathcal{L}}f_{0}}_{X_{T,\nu}}\leq&C_{2}\int_{0}^{T}\norms{\mathcal{N}(f)(\cdot,s)}{1}ds.
	\end{align*}
	Using that \(f \in B_{\epsilon,T,\nu}(e^{t\mathcal{L}}f_{0})\), \eqref{centerestimate}, and \(\norm{f_{0}}_{\dot{\mathcal{F}}^{1,1}}\leq\epsilon\), we obtain
	\begin{align*}
		\norm{f}_{X_{T,\nu}} \leq& \norm{f-e^{t\mathcal{L}}f_{0}}_{X_{T,\nu}}+\norm{e^{t\mathcal{L}}f_{0}}_{X_{T,\nu}} \leq \epsilon + C_{1}\norm{f_{0}}_{\dot{\mathcal{F}}^{1,1}} \leq (1+C_{1})\epsilon.
	\end{align*}
	Since \(\norms{f}{1} \leq \norm{f}_{X_{T,\nu}} \leq \epsilon_{1}\), by Lemma \ref{nonlinearestimate},
	\begin{align*}
		&\int_{0}^{T}\norms{\mathcal{N}(f)(\cdot,s)}{1}ds \leq G(\epsilon_{1})\int_{0}^{T}\norms{f(\cdot,s)}{2}\norms{f(\cdot,s)}{1}ds,
	\end{align*}
	where \(G=G(\nu_{0}, A_{\gamma}, R)\) is the function given in Lemma \ref{nonlinearestimate}. Moreover, since
	\begin{align*}
		\int_{0}^{T}\norms{f(\cdot,s)}{2}ds \leq \norm{f}_{X_{T,\nu}},
	\end{align*}
	we have
	\begin{align*}
		&\int_{0}^{T}\norms{f(\cdot,s)}{2}\norms{f(\cdot,s)}{1}ds \leq \norm{f}_{X_{T,\nu}}^{2} \leq (1+C_{1})^{2}\epsilon^{2}.
	\end{align*}
	Hence,
	\begin{align*}
		&\int_{0}^{T}\norms{\mathcal{N}(f)(\cdot,s)}{1}ds \leq G(\epsilon_{1})(1+C_{1})^{2}\epsilon^{2}.
	\end{align*}
	Therefore,
	\begin{align*}
		&\norm{\mathcal{T}(\mathbb{P}f)(t)-e^{t\mathcal{L}}f_{0}}_{X_{T,\nu}} \leq C_{2} G(\epsilon_{1})(1+C_{1})^{2}\epsilon^{2} \leq \epsilon.
	\end{align*}
	Lastly, we show that \(\mathcal{T}\) is a contraction on \(B_{\epsilon,T,\nu}(e^{t\mathcal{L}}f_{0})\). If \(f_{1}, f_{2} \in B_{\epsilon,T,\nu}(e^{t\mathcal{L}}f_{0})\), then
	\begin{align*}
		&\norm{\mathcal{T}(f_{1})-\mathcal{T}(f_{2})}_{X_{T,\nu}} \\
		=&\norm{\mathcal{T}(f_{1})-\mathcal{T}(f_{2})}_{L^{\infty}(0,T;\wiener{1})}+\norm{\mathcal{T}(f_{1})-\mathcal{T}(f_{2})}_{L^{1}(0,T;\wiener{2})} \\
		=&\sup_{0 \leq t \leq T}\norms{F\biggl(\mathbb{P}\tilde{N}(\mathbb{P}f_{1})(\cdot,t)-\mathbb{P}\tilde{N}(\mathbb{P}f_{2})(\cdot,t)\biggr)}{1}+\int_{0}^{T}\norms{F\biggl(\mathbb{P}\tilde{N}(\mathbb{P}f_{1})(\cdot,t)-\mathbb{P}\tilde{N}(\mathbb{P}f_{2})(\cdot,t)\biggr)}{2}dt,
	\end{align*}
	where \(F\) is defined in \eqref{operf}. By Lemma \ref{estimates},
	\begin{align*}
		\norm{\mathcal{T}(f_{1})-\mathcal{T}(f_{2})}_{X_{T,\nu}}\leq C_{3}&\int_{0}^{T}\norms{N(f_{1})(\cdot,s)-N(f_{2})(\cdot,s)}{1}ds
	\end{align*}
	for some constant \(C_{3}=C_{3}(\nu_{0},A_{\gamma},\lambda,R)>0\). For any \(i \in \{1,2\}\), \(\norms{f_{i}}{1} \leq \norm{f_{i}}_{X_{T, \nu}} \leq (1+C_{1})\epsilon \leq \epsilon_{2}\).	Hence, by Lemma \ref{nonlinearestimatescont},
	\begin{align*}
		&\norm{\mathcal{T}(f_{1})-\mathcal{T}(f_{2})}_{X_{T,\nu}} \\
		\leq&C_{3}H(\epsilon_{2},\epsilon_{2})\int_{0}^{T}\biggl((\norms{f_{1}(\cdot,s)}{1}+\norms{f_{2}(\cdot,s)}{1})\norms{f_{1}(\cdot,s)-f_{2}(\cdot,s)}{2} \\
		&+\norms{f_{1}(\cdot,s)-f_{2}(\cdot,s)}{1}(\norms{f_{1}(\cdot,s)}{2}+\norms{f_{2}(\cdot,s)}{2})\biggr)ds \\
		\leq&C_{3}H(\epsilon_{2},\epsilon_{2})\biggl(2(1+C_{1})\epsilon\int_{0}^{T}\norms{f_{1}(\cdot,s)-f_{2}(\cdot,s)}{2}ds+2(1+C_{1})\epsilon\sup_{0\leq s \leq T}\norms{f_{1}(\cdot,s)-f_{2}(\cdot,s)}{1}\biggr) \\
		\leq&C_{3}H(\epsilon_{2},\epsilon_{2})2(1+C_{1})\epsilon\norm{f_{1}-f_{2}}_{X_{T,\nu}}\leq\frac{1}{2}\norm{f_{1}-f_{2}}_{X_{T,\nu}},
	\end{align*}
	where \(H=H(\nu_{0}, A_{\gamma},R)\) is the function given in Lemma \ref{nonlinearestimatescont}. Therefore, \(\mathcal{T}\) is indeed a contraction on \(B_{\epsilon,T,\nu}(e^{t\mathcal{L}}f_{0})\).
\end{proof}
\begin{remark}
	Theorem \ref{fixedptthm} implies that for all \(t \in [0,\infty)\),
	\begin{align*}
		\norms{f(\cdot,t)}{1} \leq \norm{f_{0}}_{\dot{\mathcal{F}}^{1,1}}e^{-Ct},
	\end{align*}
	where \(C=C(\nu_{0},A_{\gamma}, \lambda,R,\norm{f_{0}}_{\dot{\mathcal{F}}^{1,1}})>0\).
\end{remark}

In summary, we have proved the following main theorem.
\begin{theorem}
	Let \(f_{0} \in \mathcal{F}^{1,1}(\mathbb{T})\). For any \(\mu>0\), \(\gamma>0\), \(\lambda>0\), and \(R>0\), there exist \(\nu_{0}(\mu,\gamma,\lambda, R)>0\) and \(\epsilon(\mu,\gamma,\lambda, R)>0\) such that for any \(T>0\), if \(\norm{f_{0}}_{\dot{\mathcal{F}}^{1,1}}\leq\epsilon\), then there exists a unique mild solution \(f \in C([0,T];\dot{Z}_{\nu}^{1,1}(\mathbb{T}))\cap L^{1}(0,T;\wiener{2})\) to (\ref{NEWequation}) as defined in Definition \ref{mildsoln}. Moreover, for any \(t \in [0,\infty)\),
	\begin{align*}
		\norms{f(\cdot,t)}{1} \leq \norm{f_{0}}_{\dot{\mathcal{F}}^{1,1}}e^{-Ct},
	\end{align*}
	where \(C=C(\nu_{0},A_{\gamma}, \lambda, R,\norm{f_{0}}_{\dot{\mathcal{F}}^{1,1}})>0\).
\end{theorem}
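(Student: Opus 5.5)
The main theorem follows from Theorem \ref{fixedptthm} and the linear estimates of Lemmas \ref{estimates} and \ref{estimates2}. I will assemble it in three steps: existence, uniqueness and regularity, and decay.

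\emph{Existence.} Fix \(\nu_0\) satisfying \eqref{nuthreshold}; this is possible by \eqref{kgeq2}, \eqref{keq1} and \eqref{rieleb}, which make the infimum over \(k\ge2\) strictly positive. Take \(\epsilon\) as in the proof of Theorem \ref{fixedptthm}. For any \(T>0\) and \(\norm{f_0}_{\dot{\mathcal{F}}^{1,1}}\le\epsilon\), the map \(\mathcal{T}\) is a contraction on the complete metric space \(B_{\epsilon,T,\nu}(e^{t\mathcal{L}}f_0)\), a closed subset of the Banach space \(X_{T,\nu}\). The Banach fixed point theorem then gives a unique fixed point \(f\), which satisfies \eqref{ode}--\eqref{init}.

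\emph{Regularity and the mild-solution conditions.} I will set \(\hat f(0,t)=\hat f_0(0)\), consistent with Lemma \ref{equivsys}. Continuity \(f\in C([0,T];\dot Z^{1,1}_\nu)\) up to \(t=0\) follows from two facts. First, \(e^{t\mathcal{L}}f_0\to f_0\) in \(\dot{\mathcal{F}}^{1,1}\) by dominated convergence on Fourier modes, since \(e^{\nu(t)|k|}\to1\) and the multiplier is bounded by \(1\). Second, the Duhamel term is bounded by \(\int_0^t\norms{\mathcal{N}(f)}{1}ds\to0\), by Lemma \ref{estimates} and the integrability of \(\norms{f}{2}\norms{f}{1}\). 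The mean-zero property of \(\mathbb{P}f\) is built into \(\mathbb{P}\). Together these verify Definition \ref{mildsoln}. Because \(\epsilon\) and \(\nu_0\) do not depend on \(T\), the fixed points for different \(T\) agree on common intervals by uniqueness, so they glue into a global solution on \([0,\infty)\).

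\emph{Uniqueness.} This holds within the ball. To extend it to the whole class of solutions small in \(X_{T,\nu}\), I will argue on a short interval where \(\norm{g-e^{t\mathcal{L}}f_0}_{X_{t_0,\nu}}\le\epsilon\); this holds by continuity of the \(X\)-norm in \(T\). A continuation argument then propagates uniqueness to all of \([0,T]\).

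\emph{Decay, the main obstacle.} The \(k=1\) multiplier decays at the rate \(c_1=-\frac{A_\gamma}{4\pi R}(\pi+\text{Im}(I_1)+2\text{Re}(I_2))>0\), and the \(k\ge2\) multipliers decay at rate at least \(c_2|k|\). I will choose \(0<C<\min(c_1,c_2)-\nu_0\). Set \(\Phi(t)=\sup_{s\le t}e^{Cs}\norms{f(s)}{1}\) and rerun Lemmas \ref{estimates} and \ref{estimates2} with the weight \(e^{Ct}\). The linear part gives \(\le\norm{f_0}_{\dot{\mathcal{F}}^{1,1}}\), because there is spare decay \(\min(c_1,c_2)-\nu_0-C>0\). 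For the Duhamel part, the nonlinear bound \(\norms{\mathcal N(f)}{1}\le G\norms{f}{2}\norms{f}{1}\) gives a contribution of at most \(C'\Phi(t)\int_0^t\norms{f}{2}ds\le C'(1+C_1)\epsilon\,\Phi(t)\). This yields \(\Phi(t)\le\norm{f_0}_{\dot{\mathcal{F}}^{1,1}}+C''\epsilon\Phi(t)\), hence \(\Phi\lesssim\norm{f_0}_{\dot{\mathcal{F}}^{1,1}}\).

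The hard part is removing the constant factor to match the stated inequality with constant exactly \(1\). I would do this by shrinking the rate \(C\) depending on \(\norm{f_0}_{\dot{\mathcal{F}}^{1,1}}\), which is allowed by the stated dependence of \(C\). Concretely, I would bound \(e^{Ct}\norms{f(t)}{1}\le e^{-(\delta-C)t}\norm{f_0}_{\dot{\mathcal{F}}^{1,1}}+(\text{Duhamel})\) and absorb the Duhamel term into the gap \(1-e^{-(\delta-C)t}\) for small \(C\). If that absorption fails for small \(t\), I would settle for the bound with a multiplicative constant, which is what the fixed-point estimate naturally gives.
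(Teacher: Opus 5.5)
Your existence argument is the paper's own. There, the main theorem is simply Theorem \ref{fixedptthm} (contraction on $B_{\epsilon,T,\nu}(e^{t\mathcal{L}}f_0)$ with $\nu_0,\epsilon$ independent of $T$) together with the Remark that follows it. Your continuity-at-$t=0$ and gluing remarks are fine.

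The genuine gap is the decay bound, which the statement claims with constant exactly $1$ in front of $\norm{f_0}_{\dot{\mathcal{F}}^{1,1}}$. Your weighted Duhamel bootstrap only gives $\Phi(t)\le\norm{f_0}_{\dot{\mathcal{F}}^{1,1}}/(1-C''\epsilon)$. A constant $K>1$ cannot be moved into the rate, since $Ke^{-Ct}\le e^{-C't}$ fails near $t=0$.

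The repair you sketch does not work either. Near $t=0$ the Duhamel term is controlled only by $\epsilon\int_0^t\norms{f}{2}ds$. For data merely in $\dot{\mathcal{F}}^{1,1}$ this is $o(1)$ but in general not $O(t)$: for $\hat f_0(k)=k^{-2}\log^{-2}k$, already $\int_0^t\norms{e^{s\mathcal{L}}f_0}{2}ds\sim 1/\log(1/t)$. So it cannot be absorbed into a uniform gap $1-e^{-(\delta-C)t}\approx(\delta-C)t$. Falling back to a multiplicative constant proves a weaker statement than the one claimed. Note also that the paper's Remark only asserts the decay as a consequence of Theorem \ref{fixedptthm}, so there is no finished argument there for you to cite.

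What is missing is a pointwise-in-time Lyapunov inequality in place of the time-integrated Duhamel bound. From the mild formulation, each $\hat f(k,\cdot)$ is absolutely continuous with $\partial_t\hat f(k)=\mu_k\hat f(k)+\widehat{\mathcal{N}(f)}(k)$ a.e. Here $\text{Re}\,\mu_{\pm1}=-c_1$ and $\text{Re}\,\mu_k\le -c_2\abs{k}$ for $\abs{k}\ge 2$, so $\partial_t\abs{\hat f(k)}\le \text{Re}\,\mu_k\abs{\hat f(k)}+\abs{\widehat{\mathcal{N}(f)}(k)}$ a.e. Multiply by $e^{\nu(t)\abs{k}}\abs{k}$ and use $\nu'\le\nu_0$. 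Then sum over $k$, after integrating in time, which is legitimate because $f\in L^1(0,T;\wiener{2})$. Lemma \ref{nonlinearestimate} then gives
\[
\frac{d}{dt}\norms{f}{1}\le-\bigl(\delta-G(\norms{f}{1})\norms{f}{1}\bigr)\norms{f}{2},\qquad \delta=\min(c_1,c_2)-\nu_0>0 .
\]
Assume $G(\norm{f_0}_{\dot{\mathcal{F}}^{1,1}})\norm{f_0}_{\dot{\mathcal{F}}^{1,1}}<\delta$. A continuity argument shows $\norms{f}{1}$ is nonincreasing, so the smallness persists. Then $\norms{f}{2}\ge\norms{f}{1}$ for mean-zero $f$, and Gr\"onwall gives $\norms{f(\cdot,t)}{1}\le\norm{f_0}_{\dot{\mathcal{F}}^{1,1}}e^{-Ct}$ with $C=\delta-G(\norm{f_0}_{\dot{\mathcal{F}}^{1,1}})\norm{f_0}_{\dot{\mathcal{F}}^{1,1}}$. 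This has constant $1$ and exactly the dependence on $\norm{f_0}_{\dot{\mathcal{F}}^{1,1}}$ that the statement allows.
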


\section{The Nonlinear Estimates}
To close the contraction mapping argument detailed in Theorem \ref{fixedptthm}, we need to derive an appropriate estimate for the terms on the right hand side of \eqref{NEWequation} that are nonlinear in \(f\). The rest of this section is dedicated to proving the following lemmas.
\begin{lemma} \label{nonlinearestimate}
	Let \(T>0\), \(\nu_{0}>0\), \(\mu>0\), \(\gamma>0\), \(\lambda>0\), and \(R>0\). Given \(t \in [0,T]\), if \(f \in \wiener{1}\) satisfies \(\norms{f}{1}\leq\epsilon_{1}\) for some sufficiently small \(\epsilon_{1}>0\), then
	\begin{align*}
		\norms{\mathcal{N}(f)}{1} \lesssim_{\nu_{0},A_{\gamma}, \lambda, R}^{1}\norms{f_{\alpha\alpha}}{0}\norms{f_{\alpha}}{0},
	\end{align*}
	where \(\lesssim_{\nu_{0},A_{\gamma},\lambda,R}^{1}\) denotes an inequality up to multiplication by a monotone increasing function \(G=G(\nu_{0},A_{\gamma},\) \(R,\lambda): [0,\epsilon_{1}] \to [0,\infty)\) of \(\norms{f}{1}\) which depends on \(\nu_{0}\), \(A_{\gamma}\), \(\lambda\), and \(R\).
\end{lemma}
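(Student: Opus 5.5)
We split $\mathcal{N}(f)=\mathcal{R}(f)-\mathcal{L}(f)$ into the explicit multilinear pieces coming from the expansion of the kernel, the forcing, and the prefactor, and estimate each in $\wiener{1}$ using the algebra property of the weighted Wiener spaces. Concretely, writing $\kerk=\kerk_0+\kerk_L+\kerk_N$ and $\fmat=\fmat_0+\fmat_L+\fmat_N$, we have
\begin{align*}
2\pi R\,\mathcal{N}(f)=\int_{\T}\bigl(\kerk_N\fmat_0+\kerk_L\fmat_L+\kerk_0\fmat_N+(\kerk_L+\kerk_N)(\fmat_L+\fmat_N)-\kerk_L\fmat_L\bigr)(\alpha+\beta)\,d\beta-2\pi\bigl((1+2f)^{\frac12}-1\bigr)\dot{\bm C}\cdot\nrmv,
\end{align*}
so every term is at least quadratic in $f$. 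The basic tools we use are: (i) the product estimate $\norms{gh}{s}\lesssim \norms{g}{s}\norm{h}_{\mathcal{F}_\nu^{0,1}}+\norm{g}_{\mathcal{F}_\nu^{0,1}}\norms{h}{s}$ for $s\in\{0,1\}$, which follows from $e^{\nu|k|}\le e^{\nu|k-j|}e^{\nu|j|}$ and $|k|\le|k-j|+|j|$; (ii) the analytic-function estimate $\norm{\Phi(f)}_{\mathcal{F}^{0,1}_\nu}\le \tilde\Phi(\norm{f}_{\mathcal{F}^{0,1}_\nu})$ for $\Phi$ analytic near $0$ with majorant series $\tilde\Phi$, applied to $(1+2f)^{\pm1/2}$, $\noCANCEL_1,\noCANCEL_2,\noCANCEL_3$, $\mathcal{Z}^{-1}$, $\mathcal{Z}^{-1/2}$; and (iii) for the difference quotients, $\DBTF=\int_0^1 f_\alpha(\alpha+s\beta)\,ds\cdot\frac{\beta}{\SB}$-type representations, giving $\norms{\DBTF}{s}\lesssim\norms{f_\alpha}{s}$ uniformly in $\beta$. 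Since $f$ is mean zero in the relevant component (after $\mathbb{P}$, the constant mode only enters as a harmless parameter), $\norm{f}_{\mathcal{F}^{0,1}_\nu}\le\norms{f}{1}\le\epsilon_1$ and $\norms{f}{0}\le\norms{f_\alpha}{0}$, so all the analytic coefficients are bounded by a monotone function $G$ of $\norms{f}{1}$, provided $\epsilon_1$ is small enough that $\mathcal{Z}$ stays near $1$ and $1+2f$ stays near $1$.

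The kernel requires separating singular and smooth parts in $\beta$. We expand each Bessel combination appearing in $\kerk$ in the variable $x=\lambda R|2\sin(\beta/2)|\mathcal{Z}^{1/2}$: the combinations $\frac{-1}{x^2}+\frac{K_1(x)}{x}+K_0(x)$ and $\frac{2}{x^2}-K_2(x)$ are of the form $a\log x+b+(\text{entire in }x^2)\cdot(1+\log x)$. Thus $\log x=\log(\lambda R|2\sin(\beta/2)|)+\frac12\log\mathcal{Z}$, and the nonlinear parts of the kernel are either smooth in $\beta$ with an integrable $\log|\sin(\beta/2)|$ weight times analytic functions of $f(\alpha),f(\alpha+\beta),\DBTF$ (bounded in $\beta$), or $\log\mathcal{Z}$, $\mathcal{Z}^{-1}-1$, etc., which are analytic in these quantities with at least one factor of $f$. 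Hence, after integrating in $\beta$ and using Minkowski's inequality in the weighted norm (translation $\alpha\mapsto\alpha+\beta$ preserves $\norms{\cdot}{s}$), each term of the kernel is controlled by $\int_{\T}(1+|\log|\sin(\beta/2)||)\,d\beta$ times products of the above norms.

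For the counting of derivatives we use the forcing $\fmat$, which contains $f_{\alpha\alpha}$ linearly (by \eqref{gmatWF}, $\gmat$ is $f_{\alpha\alpha}$ times an analytic function of $(f,f_\alpha)$ plus a function of $(f,f_\alpha)$). In the terms where $\fmat_L$ or $\fmat_N$ appears, we first integrate by parts in $\beta$ exactly as in the linear analysis, writing $\fmat(\alpha+\beta)=\partial_\beta(\cdot)$ to move one derivative onto the kernel. Taking $\norms{\cdot}{1}$ of the resulting expression then places at most two derivatives in total, and by (i) the highest-order factor is estimated in $\norms{\cdot}{1}$ applied to $f_\alpha$, i.e. $\norms{f_{\alpha\alpha}}{0}$, while all remaining factors are estimated in $\mathcal{F}^{0,1}_\nu$-type norms, each $\lesssim\norms{f_\alpha}{0}$. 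Since each term is at least quadratic, one factor $\norms{f_\alpha}{0}$ always survives, giving the bound $G(\norms{f}{1})\norms{f_{\alpha\alpha}}{0}\norms{f_\alpha}{0}$. The $\dot{\bm C}$ term is immediate: $|\dot{\bm C}|\lesssim|\hat f(\pm1)|\le\norms{f_\alpha}{0}$, and $\norms{((1+2f)^{1/2}-1)\nrmv}{1}\lesssim G\,\norms{f_\alpha}{1}$.

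The main obstacle is the term where the derivative falls on the singular part of the kernel after the $\partial_\alpha$ from the $\dot{\mathcal F}^{1,1}_\nu$ norm: for $\kerk_N\fmat_0$ and $\kerk_L\fmat_L$, differentiating $\log\mathcal{Z}$ or $\mathcal{Z}^{-1}$ in $\alpha$ produces $\partial_\alpha\DBTF=\DBL f_\alpha(\alpha)$, which is a difference quotient of $f_\alpha$ paired against a $\frac{1}{\tan(\beta/2)}$-type kernel, i.e. a commutator involving the Hilbert transform at the critical level. To handle it, we keep $\DBL f_\alpha$ as a difference quotient and bound it via $\norms{\DBL f_\alpha}{0}\lesssim\norms{f_{\alpha\alpha}}{0}$ uniformly in $\beta$, so that only the integrable logarithmic weight remains. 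Where a genuine principal value survives, we would fall back on the Fourier-side identity $\frac{1}{2\pi}\pv\int_{\T}\frac{e^{ik\beta}-1}{2\tan(\beta/2)}\,d\beta=\frac{i}{2}\operatorname{sgn}(k)$ to show that the multiplier is bounded, and then apply (i).
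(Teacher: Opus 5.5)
Your architecture matches the paper's. You split into $\kerk_{0,L,N}$ and $\fmat_{0,L,N}$, integrate by parts in $\beta$ to take the derivative off $\fmat$, and peel off the Stokes-type singular part of each Bessel combination; your $\log x$ expansion plays the role of the paper's remainders $J_1,I_1,H_1=O(w)$. You then control the analytic coefficients by the algebra property of $\mathcal{F}^{0,1}_\nu$.

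The gap is in how you estimate what the integration by parts produces. Once $\partial_\beta$ falls on $\kerk$, the kernel is no longer logarithmic. Besides $\cot(\beta/2)$ from $\partial_\beta\log|2\sin(\beta/2)|$, you get factors $\partial_\beta\DBTF$ with $O(1)$ weights and nonlinear coefficients:
\begin{itemize}
\item $\partial_\beta\mathcal{Z}/\mathcal{Z}$ contains terms like $2\cos^{2}\beta\,\DBTF\,\partial_\beta\DBTF/\mathcal{Z}$;
\item the $\compt\mompt$ component contains terms like $(\mathcal{Z}^{-1}-1)\,\partial_\beta\DBTF$ times a bounded weight.
\end{itemize}
After the $\partial_\alpha$ of the norm, these become $\partial_\beta\DBL f_\alpha(\alpha)$ or $\partial_\beta\DBTF\cdot\DBL f_\alpha(\alpha)$.

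By the last formula of the paper, $\partial_\beta\DBL g$ has symbol $e^{ik\beta}\bigl(1-\frac{\sin(k\beta/2)}{k\tan(\beta/2)}e^{-ik\beta/2}\bigr)/(2\sin(\beta/2))$ acting on $\widehat{g_\alpha}(k)$. Its modulus is comparable to $\min(|k|,|\beta|^{-1})$, and $\int_{\T}\min(|k|,|\beta|^{-1})\,d\beta\approx 1+\log|k|$. So Minkowski in $\beta$ with uniform-in-$\beta$ bounds loses $\log(2+|k|)$ on whichever factor carries $\partial_\beta$. You get $\sum_k e^{\nu|k|}|k|^{2}\log(2+|k|)|\hat f(k)|$ in place of $\norms{f}{2}$, or $\sum_k e^{\nu|k|}|k|\log(2+|k|)|\hat f(k)|$ in place of $\norms{f}{1}$.

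At this critical level the loss cannot be absorbed: the fixed point needs exactly $\int_0^T\norms{f}{2}\norms{f}{1}\,ds\le\norm{f}_{X_{T,\nu}}^2$, and $\nu(0)=0$. Your third paragraph's derivative count treats $\partial_\beta\kerk$ as costing nothing, which is where this is hidden.

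Your stated remedy does not close the gap.
\begin{itemize}
\item You say the uniform bound $\norms{\DBL f_\alpha}{0}\lesssim\norms{f_{\alpha\alpha}}{0}$ leaves ``only the integrable logarithmic weight.'' This contradicts your own description: if $\DBL f_\alpha$ is paired with a $\cot(\beta/2)$-type kernel, bounding it uniformly leaves $\cot(\beta/2)$, which is not integrable.
\item The genuinely singular factors are $\partial_\beta\DBTF$ and $\partial_\beta\DBL f_\alpha(\alpha)$, not $\DBL f_\alpha$.
\item Your fallback identity $\frac{1}{2\pi}\pv\int_{\T}\frac{e^{ik\beta}-1}{2\tan(\beta/2)}\,d\beta=\frac{i}{2}\operatorname{sgn}(k)$ only covers convolution-type integrals $\pv\int\cot(\beta/2)h(\alpha+\beta)\,d\beta$. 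For kernels involving $\DBTF$ you cannot afterwards ``apply (i)'', because the $\beta$-integral couples the frequencies of all the factors.
\end{itemize}

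The missing step is what the paper imports from Section~5 of \cite{MR4679708} and Section~3 of \cite{MR4596732}:
\begin{enumerate}
\item Expand the analytic nonlinearities as power series in $f(\alpha)$, $f(\alpha+\beta)$, $\DBTF$ and $f_\alpha(\alpha+\beta)$.
\item Write the $k$-th Fourier mode of each monomial as a convolution sum carrying the $\beta$-dependent symbols \eqref{FourierDBFA}.
\item Expand each $\frac{\sin(k_j\beta/2)}{k_j\sin(\beta/2)}$ as an average of $|k_j|$ exponentials via \eqref{finitesum}.
\item Bound every resulting $\beta$-integral by the uniformly bounded principal values \eqref{pvsinint}.
\end{enumerate}
This gives multilinear symbols bounded uniformly in all frequencies, so no logarithm is lost. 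Only then does your counting (one factor in $\dot{\mathcal{F}}^{2,1}_\nu$, the rest in $\dot{\mathcal{F}}^{1,1}_\nu$ or $\mathcal{F}^{0,1}_\nu$) yield $G(\norms{f}{1})\norms{f_{\alpha\alpha}}{0}\norms{f_\alpha}{0}$.
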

\begin{lemma} \label{nonlinearestimatescont}
	Let \(T>0\), \(\nu_{0}>0\), \(\mu>0\), \(\gamma>0\), \(\lambda>0\), and \(R>0\). Given \(t \in [0,T]\), if \(f_{1} \in \wieners{1}\) and \(f_{2} \in \wieners{1}\) satisfy \(\norms{f_{1}}{1}\leq\epsilon_{2}\) and \(\norms{f_{2}}{1}\leq\epsilon_{2}\) for some sufficiently small \(\epsilon_{2}>0\), then
	\begin{align*}
		\norms{\mathcal{N}(f_{1})-\mathcal{N}(f_{2})}{1}\lesssim_{\nu_{0},A_{\gamma},\lambda,R}^{1,2} &(\norms{f_{1}}{1}+\norms{f_{2}}{1})\norms{f_{1}-f_{2}}{2} \\
		&+\norms{f_{1}-f_{2}}{1}(\norms{f_{1}}{2}+\norms{f_{2}}{2}),
	\end{align*}
	where \(\lesssim_{\nu_{0},A_{\gamma},\lambda,R}^{1,2}\) denotes an inequality up to multiplication by a function \(H=H(\nu_{0},A_{\gamma},R,\lambda): [0,\epsilon_{2}] \times [0,\epsilon_{2}] \to [0,\infty)\) of \(\norms{f_{1}}{1}\) and \(\norms{f_{2}}{1}\), which is monotone increasing with respect to each argument.
\end{lemma}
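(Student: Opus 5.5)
The approach is to write $\mathcal{N}(f_1)-\mathcal{N}(f_2)$ as an integral along the segment joining the two functions and estimate the linearized nonlinearity. Set $f_\theta = f_2+\theta(f_1-f_2)$ for $\theta\in[0,1]$ and $h=f_1-f_2$. Then
\begin{align*}
\mathcal{N}(f_1)-\mathcal{N}(f_2)=\int_0^1 D\mathcal{N}(f_\theta)[h]\,d\theta .
\end{align*}
Since $\norms{f_\theta}{1}\le\epsilon_2$ by convexity of the seminorm, it suffices to show, uniformly for $\norms{g}{1}\le \epsilon_2$,
\begin{align*}
\norms{D\mathcal{N}(g)[h]}{1}\lesssim \norms{g}{1}\norms{h}{2}+\norms{h}{1}\norms{g}{2},
\end{align*}
with a constant that is monotone in $\norms{g}{1}$. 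Averaging in $\theta$ then gives the claimed bilinear structure, since $\norms{f_\theta}{2}\le\norms{f_1}{2}+\norms{f_2}{2}$.

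To get this bound, I will reuse the decomposition behind Lemma \ref{nonlinearestimate}. There, $\mathcal{N}(f)$ is expanded as a finite sum of terms of the form
\begin{align*}
\int_{\mathbb{T}} \mathfrak{K}(\beta)\, m(\alpha,\beta)\, \partial_\alpha^{j}\fmat\text{-type factors}\,d\beta .
\end{align*}
Here $\mathfrak{K}$ is one of the explicit Bessel/logarithmic kernels evaluated at $\lambda R\abs{2\sin(\beta/2)}$. The multiplier $m$ is a product of the quantities $f(\alpha)$, $f(\alpha+\beta)$, $\DBTF$, the functions $\noCANCEL_1,\noCANCEL_2,\noCANCEL_3$, $\mathcal{Z}(f)^{-1}$ and $\mathcal{Z}(f)^{-1/2}$, together with the quadratic and higher part of $\gmat(f)$ from \eqref{gmatWF}. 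Each such piece is at least quadratic in $f$, with at most one factor carrying two derivatives. Differentiating in $f$ in the direction $h$ is a Leibniz rule: every term becomes a sum of terms in which exactly one factor is replaced by its $h$-version. The $\theta$-integrated difference is therefore a telescoping sum of products in which one slot contains $h$ (or $\DBL h$, $h_\alpha$, $h_{\alpha\alpha}$) and the others contain $f_1$ or $f_2$.

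Each such product is estimated with the algebra property of $\wiener{s}$. Two facts are needed. First, the Leibniz-type inequality
\begin{align*}
\norms{uv}{1}\le \norms{u}{1}\norm{v}_{\mathcal{F}^{0,1}_\nu}+\norm{u}_{\mathcal{F}^{0,1}_\nu}\norms{v}{1},
\end{align*}
which follows from $\abs{k}\le\abs{k-j}+\abs{j}$ and $e^{\nu\abs{k}}\le e^{\nu\abs{k-j}}e^{\nu\abs{j}}$. Second, the bound $\norm{D_\beta u}_{\mathcal{F}^{s,1}_\nu}\lesssim \norms{u}{s+1}$ uniformly in $\beta$, using that the symbol $(e^{ik\beta}-1)/(2\sin(\beta/2))$ is bounded by $\abs{k}$.

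The analytic functions $\noCANCEL_i$ and $\mathcal{Z}^{-1/2}$ are expanded as power series in $f$. These series converge in the Wiener algebra once $\epsilon_2$ is small, and this is where the monotone function $H$ comes from. The differences $\noCANCEL_i(f_1)-\noCANCEL_i(f_2)$ are bounded term by term by $\norms{h}{\cdot}$ times a power series in $\norms{f_1}{1},\norms{f_2}{1}$. The mean-zero assumption lets $\norm{\cdot}_{\mathcal{F}^{0,1}_\nu}\le\norms{\cdot}{1}$. Distributing the single top-order derivative onto either $h$ or $f_i$ produces exactly the two terms on the right of the estimate. The $\beta$-integrals against $\mathfrak{K}$ are finite because, as in Section~\ref{firstterm}, the logarithmic singularity is integrated by parts into the Hilbert-transform form plus kernels bounded by $\abs{\log\abs{\beta}}$.

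The main obstacle is the top-order term. In $\mathcal{N}$, two derivatives can fall on $f$ through $\gmat_N$, and after integrating by parts against the kernel, the $\log$ singularity produces a $\mathcal{H}\partial_\alpha$-type operator acting on a product. To keep the total order at $\norms{\cdot}{2}$ rather than $3$, this must be done as in Lemma \ref{nonlinearestimate}. The plan is to move one $\partial_\beta$ onto the kernel, writing $f_{\alpha\alpha}(\alpha+\beta)=\partial_\beta f_\alpha(\alpha+\beta)$. This turns the singular kernel into a $1/\tan(\beta/2)$ kernel, which is bounded on $\wiener{1}$ in the Fourier multiplier sense. One must then check that this integration by parts commutes with the linearization in $h$. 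Everything else is bookkeeping of finitely many terms.
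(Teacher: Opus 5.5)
The gap is at the step you yourself flag as the main obstacle, and the proposed fix does not close it. Your segment reduction to $D\mathcal{N}(g)[h]$, the Leibniz inequality and the uniform bound on $D_\beta u$ are all fine.

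When you write $f_{\alpha\alpha}(\alpha+\beta)=\partial_\beta f_\alpha(\alpha+\beta)$ and integrate by parts, the resulting $\cot(\beta/2)$ does not act on a function of $\alpha+\beta$ alone. It multiplies the $\beta$-dependent factors in your multiplier $m(\alpha,\beta)$: $D_\beta g(\alpha)$, $D_\beta h(\alpha)$, $\mathcal{Z}(f)^{-1}$, $\mathcal{U}(f)$ and $\mathcal{V}(f)$. Moreover, $\partial_\beta$ also lands on these factors and produces
\begin{align*}
\partial_\beta\big[D_\beta f(\alpha)\big]=\frac{f_\alpha(\alpha+\beta)-\cos(\beta/2)\,D_\beta f(\alpha)}{2\sin(\beta/2)},
\end{align*}
which is a new $1/\sin(\beta/2)$ singularity. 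This is exactly the factor $\partial_\beta[\mathcal{Z}(f)]/(2\mathcal{Z}(f))$ in the paper's formula for $\partial_\beta[J(w)]$.

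Terms such as $\mathrm{pv}\int_{\mathbb{T}}\cot(\beta/2)\,D_\beta g(\alpha)\,h_\alpha(\alpha+\beta)\,d\beta$ are multilinear commutator-type operators, not Fourier multipliers. So boundedness of $\mathcal{H}$ on $\dot{\mathcal{F}}^{1,1}_\nu$ says nothing about them. Your other tools fail too:
\begin{itemize}
\item Minkowski in $\beta$ plus the uniform-in-$\beta$ bound for $D_\beta u$ diverges, since $\int_{\mathbb{T}}|\sin(\beta/2)|^{-1}d\beta=\infty$.
\item Bounding the symbol in absolute value, $|\mathcal{F}(\partial_\beta D_\beta f)(k)|\lesssim\min(k^2,|k|/|\beta|)\,|\hat f(k)|$, loses a factor $\log|k|$ after the $\beta$-integration. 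That loss cannot be absorbed when the estimate has to close with exactly $\norms{\cdot}{2}$.
\item Your claim that the remaining kernels are bounded by $|\log|\beta||$ holds for the explicit Bessel kernels at $\lambda R|2\sin(\beta/2)|$. It fails once $\partial_\beta$ has hit the $f$-dependence inside $\mathcal{Z}(f)$.
\end{itemize}

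The missing ingredient is the one the paper imports from \cite{MR4679708} and \cite{MR4596732}: compute each Fourier mode exactly, inside the $\beta$-integral. By \eqref{FourierDBFA} and \eqref{finitesum}, the symbol of $D_\beta f$ is $e^{ik\beta/2}$ times an average of $|k|$ pure exponentials $e^{il\beta/2}$. The same holds for the numerator of $\partial_\beta D_\beta f$. Hence every term becomes a principal-value integral of $1/\sin(\beta/2)$ against a combination of exponentials with bounded $\ell^1$ weight. By \eqref{pvsinint} these integrals are bounded uniformly in $l$. This gives a multilinear symbol bound that is independent of the frequencies and of the number of factors. That independence is what lets the power series in $\mathcal{Z}(f)^{-1}$, $\mathcal{G}_i$ and $(1+2f)^{-1}$ sum into a monotone function $H$.

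For the non-Stokes Bessel pieces, the paper additionally uses $|J_1(y)|,|I_1(y)|,|H_1(y)|\le C|y|$. The resulting factor $|2\sin(\beta/2)|$ cancels the $\cot(\beta/2)$ coming from $\partial_\beta w$.

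With these symbol bounds in place, your linearization in $h$ and your placement of the single top-order derivative go through as you describe. One smaller point: you also use a mean-zero hypothesis that the lemma does not state. It is available in the application, where the contraction runs in $\dot{Z}^{1,1}_\nu$, but you should say so.
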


We recall that
\begin{align*}
    \mathcal{N}(f)(\alpha)=&\biggl[\frac{1}{2\pi R} \int_{\T}\kerk(\alpha,\beta) \fmat(\alpha +\beta) d\beta-(1+2f(\alpha,t))^{\frac{1}{2}}\frac{\dot{\bm{C}}(t)\cdot \nrmv(\alpha)}{R}\biggr]_{N} \\
    =&\frac{1}{2\pi R}\int_{\mathbb{T}}\kerk_{0}(\alpha,\beta)\fmat_{N}(\alpha+\beta)+\kerk_{L}(\alpha,\beta)\fmat_{L}(\alpha+\beta)+\kerk_{L}(\alpha,\beta)\fmat_{N}(\alpha+\beta) \\
    &+\kerk_{N}(\alpha,\beta)\fmat_{0}(\alpha+\beta)+\kerk_{N}(\alpha,\beta)\fmat_{L}(\alpha+\beta)+\kerk_{N}(\alpha,\beta)\fmat_{N}(\alpha+\beta)d\beta \\
    &-\biggl(f(\alpha)\frac{\dot{\bm{C}}(t)\cdot \nrmv(\alpha)}{R}+f(\alpha)^{2}\mathcal{G}_{1}\frac{\dot{\bm{C}}(t)\cdot \nrmv(\alpha)}{R}\biggr).
\end{align*}

\subsection{The integral terms}
We can derive appropriate estimates for the six nonlinear integral terms at once by studying the integral
\begin{align*}
	&\int_{\mathbb{T}}\kerk(\alpha,\beta)\fmat(\alpha+\beta)d\beta =\int_{\mathbb{T}}\kerk(\alpha,\beta)2 A_\gamma \frac{d}{d\beta}\left(\frac{z_{\alpha}(\alpha+\beta)}{\abs{z_{\alpha}(\alpha+\beta)}}\right)d\beta =-2A_{\gamma}\int_{\mathbb{T}}\partial_{\beta}[\kerk(\alpha,\beta)]\frac{z_{\alpha}(\alpha+\beta)}{\abs{z_{\alpha}(\alpha+\beta)}}d\beta,
\end{align*}
where
\begin{align*}
	&\kerk(\alpha,\beta)(1+2f(\alpha))^{-\frac{1}{2}} \\
	=&\nrmv(\alpha) \cdot \biggl(\biggl(\frac{-1}{\lambda^{2}R^{2}\abs{2\sin(\beta/2)}^{2}\mathcal{Z}(f)}+\frac{K_{1}(\lambda R\abs{2\sin(\beta/2)}\mathcal{Z}(f)^{\frac{1}{2}})}{\lambda R\abs{2\sin(\beta/2)}\mathcal{Z}(f)^{\frac{1}{2}}}+K_{0}(\lambda R\abs{2\sin(\beta/2)}\mathcal{Z}(f)^{\frac{1}{2}})\biggr)I \\
	&+\biggl(\frac{1}{R^{2}\mathcal{Z}(f)}\frac{2}{\lambda^{2}R^{2}\abs{2\sin(\beta/2)}^{2}\mathcal{Z}(f)}-\frac{K_{2}(\lambda R\abs{2\sin(\beta/2)}\mathcal{Z}(f)^{\frac{1}{2}})}{R^{2}\mathcal{Z}(f)}\biggr) \\
	&\biggl(R^{2}\compt(f)^{2}\nrmv(\alpha) \otimes \nrmv(\alpha)+R^{2}\compt(f)\mompt(f)\nrmv(\alpha) \otimes \tgtv(\alpha) \\
	&+R^{2}\compt(f)\mompt(f)\tgtv(\alpha)\otimes \nrmv(\alpha)+R^{2}\mompt(f)^{2}\tgtv(\alpha) \otimes \tgtv(\alpha)\biggr)\biggr).
\end{align*}
We first derive an estimate for this integral and then discard the terms that do not correspond to the original nonlinear integral terms.

We observe that
\begin{align}
	&\norms{-2A_{\gamma}\int_{\mathbb{T}}\partial_{\beta}[\kerk(\alpha,\beta)]\frac{z_{\alpha}(\alpha+\beta)}{\abs{z_{\alpha}(\alpha+\beta)}}d\beta}{1} \label{integralnorm} \\
	=&\sum_{k \in \mathbb{Z} \setminus \{0\}}e^{\nu(t)\abs{k}}\abs{k}\abs{-2A_{\gamma}\int_{\mathbb{T}}\mathcal{F}\biggl(\partial_{\beta}[\kerk(\alpha,\beta)]\frac{z_{\alpha}(\alpha+\beta)}{\abs{z_{\alpha}(\alpha+\beta)}}\biggr)(k)d\beta}. \nonumber
\end{align}
Taking the derivative of \(\kerk(\alpha,\beta)(1+2f(\alpha))^{-\frac{1}{2}}\) with respect to \(\beta\), we get
\begin{align*}
	&\partial_{\beta}[\kerk(\alpha,\beta)](1+2f(\alpha))^{-\frac{1}{2}} \\
	=&\nrmv(\alpha) \cdot \biggl(\frac{\partial}{\partial\beta}\biggl(\frac{-1}{\lambda^{2}R^{2}\abs{2\sin(\beta/2)}^{2}\mathcal{Z}(f)}+\frac{K_{1}(\lambda R\abs{2\sin(\beta/2)}\mathcal{Z}(f)^{\frac{1}{2}})}{\lambda R\abs{2\sin(\beta/2)}\mathcal{Z}(f)^{\frac{1}{2}}}+K_{0}(\lambda R\abs{2\sin(\beta/2)}\mathcal{Z}(f)^{\frac{1}{2}})\biggr)I \\
	&+\frac{\partial}{\partial\beta}\biggl[\biggl(\frac{1}{R^{2}\mathcal{Z}(f)}\frac{2}{\lambda^{2}R^{2}\abs{2\sin(\beta/2)}^{2}\mathcal{Z}(f)}-\frac{K_{2}(\lambda R\abs{2\sin(\beta/2)}\mathcal{Z}(f)^{\frac{1}{2}})}{R^{2}\mathcal{Z}(f)}\biggr)R^{2}\compt(f)^{2}\biggr]\nrmv(\alpha) \otimes \nrmv(\alpha) \\
	&+\frac{\partial}{\partial\beta}\biggl[\biggl(\frac{1}{R^{2}\mathcal{Z}(f)}\frac{2}{\lambda^{2}R^{2}\abs{2\sin(\beta/2)}^{2}\mathcal{Z}(f)}-\frac{K_{2}(\lambda R\abs{2\sin(\beta/2)}\mathcal{Z}(f)^{\frac{1}{2}})}{R^{2}\mathcal{Z}(f)}\biggr)R^{2}\compt(f)\mompt(f)\biggr]\nrmv(\alpha) \otimes \tgtv(\alpha) \\
	&+\frac{\partial}{\partial\beta}\biggl[\biggl(\frac{1}{R^{2}\mathcal{Z}(f)}\frac{2}{\lambda^{2}R^{2}\abs{2\sin(\beta/2)}^{2}\mathcal{Z}(f)}-\frac{K_{2}(\lambda R\abs{2\sin(\beta/2)}\mathcal{Z}(f)^{\frac{1}{2}})}{R^{2}\mathcal{Z}(f)}\biggr)R^{2}\compt(f)\mompt(f)\biggr]\tgtv(\alpha)\otimes \nrmv(\alpha) \\
	&+\frac{\partial}{\partial\beta}\biggl[\biggl(\frac{1}{R^{2}\mathcal{Z}(f)}\frac{2}{\lambda^{2}R^{2}\abs{2\sin(\beta/2)}^{2}\mathcal{Z}(f)}-\frac{K_{2}(\lambda R\abs{2\sin(\beta/2)}\mathcal{Z}(f)^{\frac{1}{2}})}{R^{2}\mathcal{Z}(f)}\biggr)R^{2}\mompt(f)^{2}\biggr]\tgtv(\alpha) \otimes \tgtv(\alpha)\biggr).
\end{align*}
Using
\begin{align*}
	\frac{z_{\alpha}(\alpha+\beta)}{\abs{z_{\alpha}(\alpha+\beta)}}=\frac{f_{\alpha}(\alpha+\beta)}{(1+2f(\alpha+\beta))\biggl(\frac{f_{\alpha}(\alpha+\beta)^{2}}{(1+2f(\alpha+\beta))^{2}}+1\biggr)^{1/2}}\nrmv(\alpha+\beta)+\frac{1}{\biggl(\frac{f_{\alpha}(\alpha+\beta)^{2}}{(1+2f(\alpha+\beta))^{2}}+1\biggr)^{1/2}}\tgtv(\alpha+\beta),
\end{align*}
we obtain
\begin{align*}
	&\partial_{\beta}[\kerk(\alpha,\beta)]\frac{z_{\alpha}(\alpha+\beta)}{\abs{z_{\alpha}(\alpha+\beta)}}(1+2f(\alpha))^{-\frac{1}{2}} \\
	=&\biggl(\frac{\partial}{\partial\beta}\biggl(\frac{-1}{\lambda^{2}R^{2}\abs{2\sin(\beta/2)}^{2}\mathcal{Z}(f)}+\frac{K_{1}(\lambda R\abs{2\sin(\beta/2)}\mathcal{Z}(f)^{\frac{1}{2}})}{\lambda R\abs{2\sin(\beta/2)}\mathcal{Z}(f)^{\frac{1}{2}}}+K_{0}(\lambda R\abs{2\sin(\beta/2)}\mathcal{Z}(f)^{\frac{1}{2}})\biggr) \\
	&\cos\beta f_{\alpha}(\alpha+\beta)(1+2f(\alpha+\beta))^{-1}\biggl(\frac{f_{\alpha}(\alpha+\beta)^{2}}{(1+2f(\alpha+\beta))^{2}}+1\biggr)^{-1/2} \\
	&-\frac{\partial}{\partial\beta}\biggl(\frac{-1}{\lambda^{2}R^{2}\abs{2\sin(\beta/2)}^{2}\mathcal{Z}(f)}+\frac{K_{1}(\lambda R\abs{2\sin(\beta/2)}\mathcal{Z}(f)^{\frac{1}{2}})}{\lambda R\abs{2\sin(\beta/2)}\mathcal{Z}(f)^{\frac{1}{2}}}+K_{0}(\lambda R\abs{2\sin(\beta/2)}\mathcal{Z}(f)^{\frac{1}{2}})\biggr) \\
	&\sin\beta\biggl(\frac{f_{\alpha}(\alpha+\beta)^{2}}{(1+2f(\alpha+\beta))^{2}}+1\biggr)^{-1/2} \\
	&+\frac{\partial}{\partial\beta}\biggl[\biggl(\frac{1}{R^{2}\mathcal{Z}(f)}\frac{2}{\lambda^{2}R^{2}\abs{2\sin(\beta/2)}^{2}\mathcal{Z}(f)}-\frac{K_{2}(\lambda R\abs{2\sin(\beta/2)}\mathcal{Z}(f)^{\frac{1}{2}})}{R^{2}\mathcal{Z}(f)}\biggr)R^{2}\compt(f)^{2}\biggr] \\
	&\cos\beta f_{\alpha}(\alpha+\beta)(1+2f(\alpha+\beta))^{-1}\biggl(\frac{f_{\alpha}(\alpha+\beta)^{2}}{(1+2f(\alpha+\beta))^{2}}+1\biggr)^{-1/2} \\
	&+\frac{\partial}{\partial\beta}\biggl[\biggl(\frac{1}{R^{2}\mathcal{Z}(f)}\frac{2}{\lambda^{2}R^{2}\abs{2\sin(\beta/2)}^{2}\mathcal{Z}(f)}-\frac{K_{2}(\lambda R\abs{2\sin(\beta/2)}\mathcal{Z}(f)^{\frac{1}{2}})}{R^{2}\mathcal{Z}(f)}\biggr)R^{2}\compt(f)\mompt(f)\biggr] \\
	&\sin\beta f_{\alpha}(\alpha+\beta)(1+2f(\alpha+\beta))^{-1}\biggl(\frac{f_{\alpha}(\alpha+\beta)^{2}}{(1+2f(\alpha+\beta))^{2}}+1\biggr)^{-1/2} \\
	&+\frac{\partial}{\partial\beta}\biggl[\biggl(\frac{1}{R^{2}\mathcal{Z}(f)}\frac{2}{\lambda^{2}R^{2}\abs{2\sin(\beta/2)}^{2}\mathcal{Z}(f)}-\frac{K_{2}(\lambda R\abs{2\sin(\beta/2)}\mathcal{Z}(f)^{\frac{1}{2}})}{R^{2}\mathcal{Z}(f)}\biggr)R^{2}\compt(f)\mompt(f)\biggr] \\
	&\cos\beta\biggl(\frac{f_{\alpha}(\alpha+\beta)^{2}}{(1+2f(\alpha+\beta))^{2}}+1\biggr)^{-1/2}\biggr).
\end{align*}
In view of
\begin{align*}
	\biggl(\frac{f_{\alpha}^{2}(\alpha,t)}{(1+2f(\alpha,t))^{2}}+1\biggr)^{-1/2}=&\sum_{n_{1}=0}^{\infty}\sum_{n_{2}=0}^{\infty}\binom{-1/2}{n_{1}}\binom{-2n_{1}}{n_{2}}2^{n_{2}}f_{\alpha}(\alpha,t)^{2n_{1}}f(\alpha,t)^{n_{2}}, \\
	(1+2f(\alpha+\beta))^{-1}=& \sum_{n=0}^{\infty}\binom{-1}{n}2^{n}f(\alpha+\beta)^{n},
\end{align*}
we arrive at
\begin{align} \label{integrand}
	&\partial_{\beta}[\kerk(\alpha,\beta)]\frac{z_{\alpha}(\alpha+\beta)}{\abs{z_{\alpha}(\alpha+\beta)}} \\
	=&(1+2f(\alpha))^{\frac{1}{2}} \nonumber \\
	&\biggl(\frac{\partial}{\partial\beta}\biggl(\frac{-1}{\lambda^{2}R^{2}\abs{2\sin(\beta/2)}^{2}\mathcal{Z}(f)}+\frac{K_{1}(\lambda R\abs{2\sin(\beta/2)}\mathcal{Z}(f)^{\frac{1}{2}})}{\lambda R\abs{2\sin(\beta/2)}\mathcal{Z}(f)^{\frac{1}{2}}}+K_{0}(\lambda R\abs{2\sin(\beta/2)}\mathcal{Z}(f)^{\frac{1}{2}})\biggr) \nonumber \\
	&\cos\beta f_{\alpha}(\alpha+\beta)\sum_{n=0}^{\infty}\binom{-1}{n}2^{n}f(\alpha+\beta)^{n}\sum_{n_{1}=0}^{\infty}\sum_{n_{2}=0}^{\infty}\binom{-1/2}{n_{1}}\binom{-2n_{1}}{n_{2}}2^{n_{2}}f_{\alpha}(\alpha+\beta)^{2n_{1}}f(\alpha+\beta)^{n_{2}} \nonumber \\
	&-\frac{\partial}{\partial\beta}\biggl(\frac{-1}{\lambda^{2}R^{2}\abs{2\sin(\beta/2)}^{2}\mathcal{Z}(f)}+\frac{K_{1}(\lambda R\abs{2\sin(\beta/2)}\mathcal{Z}(f)^{\frac{1}{2}})}{\lambda R\abs{2\sin(\beta/2)}\mathcal{Z}(f)^{\frac{1}{2}}}+K_{0}(\lambda R\abs{2\sin(\beta/2)}\mathcal{Z}(f)^{\frac{1}{2}})\biggr) \nonumber \\
	&\sin\beta\sum_{n_{1}=0}^{\infty}\sum_{n_{2}=0}^{\infty}\binom{-1/2}{n_{1}}\binom{-2n_{1}}{n_{2}}2^{n_{2}}f_{\alpha}(\alpha+\beta)^{2n_{1}}f(\alpha+\beta)^{n_{2}} \nonumber \\
	&+\frac{\partial}{\partial\beta}\biggl[\frac{1}{\mathcal{Z}(f)}\biggl(\frac{2}{\lambda^{2}R^{2}\abs{2\sin(\beta/2)}^{2}\mathcal{Z}(f)}-K_{2}(\lambda R\abs{2\sin(\beta/2)}\mathcal{Z}(f)^{\frac{1}{2}})\biggr)\compt(f)^{2}\biggr] \nonumber \\
	&\cos\beta f_{\alpha}(\alpha+\beta)\sum_{n=0}^{\infty}\binom{-1}{n}2^{n}f(\alpha+\beta)^{n}\sum_{n_{1}=0}^{\infty}\sum_{n_{2}=0}^{\infty}\binom{-1/2}{n_{1}}\binom{-2n_{1}}{n_{2}}2^{n_{2}}f_{\alpha}(\alpha+\beta)^{2n_{1}}f(\alpha+\beta)^{n_{2}} \nonumber \\
	&+\frac{\partial}{\partial\beta}\biggl[\frac{1}{\mathcal{Z}(f)}\biggl(\frac{2}{\lambda^{2}R^{2}\abs{2\sin(\beta/2)}^{2}\mathcal{Z}(f)}-K_{2}(\lambda R\abs{2\sin(\beta/2)}\mathcal{Z}(f)^{\frac{1}{2}})\biggr)\compt(f)\mompt(f)\biggr] \nonumber \\
	&\sin\beta f_{\alpha}(\alpha+\beta)\sum_{n=0}^{\infty}\binom{-1}{n}2^{n}f(\alpha+\beta)^{n}\sum_{n_{1}=0}^{\infty}\sum_{n_{2}=0}^{\infty}\binom{-1/2}{n_{1}}\binom{-2n_{1}}{n_{2}}2^{n_{2}}f_{\alpha}(\alpha+\beta)^{2n_{1}}f(\alpha+\beta)^{n_{2}} \nonumber \\
	&+\frac{\partial}{\partial\beta}\biggl[\frac{1}{\mathcal{Z}(f)}\biggl(\frac{2}{\lambda^{2}R^{2}\abs{2\sin(\beta/2)}^{2}\mathcal{Z}(f)}-K_{2}(\lambda R\abs{2\sin(\beta/2)}\mathcal{Z}(f)^{\frac{1}{2}})\biggr)\compt(f)\mompt(f)\biggr] \nonumber \\
	&\cos\beta\sum_{n_{1}=0}^{\infty}\sum_{n_{2}=0}^{\infty}\binom{-1/2}{n_{1}}\binom{-2n_{1}}{n_{2}}2^{n_{2}}f_{\alpha}(\alpha+\beta)^{2n_{1}}f(\alpha+\beta)^{n_{2}}\biggr). \nonumber
\end{align}

To complete the derivation of an appropriate estimate for the norm \eqref{integralnorm}, we adapt the techniques from Section~5 of \cite{MR4679708} or Section~3 of \cite{MR4596732}. Specifically, the next step is to take the \(k\)th Fourier mode of the expression \eqref{integrand}. Unlike the Muskat and Peskin models, however, our system features modified Bessel functions of the second kind, which must be estimated carefully. In particular, we need to establish bounds for the Fourier modes of
\begin{align} \label{exp1}
	&\frac{\partial}{\partial\beta}\biggl(\frac{-1}{\lambda^{2}R^{2}\abs{2\sin(\beta/2)}^{2}\mathcal{Z}(f)}+\frac{K_{1}(\lambda R\abs{2\sin(\beta/2)}\mathcal{Z}(f)^{\frac{1}{2}})}{\lambda R\abs{2\sin(\beta/2)}\mathcal{Z}(f)^{\frac{1}{2}}}+K_{0}(\lambda R\abs{2\sin(\beta/2)}\mathcal{Z}(f)^{\frac{1}{2}})\biggr),
\end{align}
\begin{align} \label{exp2}
	&\frac{\partial}{\partial\beta}\biggl(\frac{2}{\lambda^{2}R^{2}\abs{2\sin(\beta/2)}^{2}\mathcal{Z}(f)}-K_{2}(\lambda R\abs{2\sin(\beta/2)}\mathcal{Z}(f)^{\frac{1}{2}})\biggr),
\end{align}
and
\begin{align} \label{exp3}
	&\frac{2}{\lambda^{2}R^{2}\abs{2\sin(\beta/2)}^{2}\mathcal{Z}(f)}-K_{2}(\lambda R\abs{2\sin(\beta/2)}\mathcal{Z}(f)^{\frac{1}{2}}).
\end{align}

For notational convenience, we let \(w(\alpha,\beta)\eqdef\lambda R \abs{2\sin(\beta/2)}\mathcal{Z}(f)^{\frac{1}{2}}\). In the case of \eqref{exp1}, we further define \(J(w)\eqdef -\frac{1}{w^{2}}+\frac{K_{1}(w)}{w}+K_{0}(w)\). We observe that
\begin{align*}
	\frac{\partial}{\partial\beta}[J(w)]=&J'(w)\frac{\partial w}{\partial\beta}=\biggl(-\frac{1}{2}+J_{1}(w)\biggr)\biggl(\frac{\cos(\beta/2)}{2\sin(\beta/2)}+\frac{1}{2\mathcal{Z}(f)}\frac{\partial}{\partial\beta}[\mathcal{Z}(f)]\biggr),
\end{align*}
where \(J_{1}(w)\eqdef\frac{1}{2}+\frac{2}{w^{2}}-K_{2}(w)-wK_{1}(w)\). Since there exists a uniform constant \(C_{J_{1}}>0\) such that \(\abs{J_{1}(y)} \leq C_{J_{1}}\abs{y}\) for all \(y \geq 0\), we obtain
\begin{align} \label{J1estimate}
	\abs{\mathcal{F}(J_{1}(w(\cdot,\beta)))(k)} \leq C_{J_{1}}\mathcal{F}(w(\cdot,\beta))(0)=C_{J_{1}}\lambda R \abs{2\sin(\beta/2)}\mathcal{F}(\mathcal{Z}(f)^{\frac{1}{2}})(0).
\end{align}
Therefore,
\begin{align} \label{derivJ}
	\frac{\partial}{\partial\beta}[J(w)]=&-\frac{1}{2}\biggl(\frac{\cos(\beta/2)}{2\sin(\beta/2)}+\frac{1}{2\mathcal{Z}(f)}\frac{\partial}{\partial\beta}[\mathcal{Z}(f)]\biggr)+J_{1}(w)\biggl(\frac{\cos(\beta/2)}{2\sin(\beta/2)}+\frac{1}{2\mathcal{Z}(f)}\frac{\partial}{\partial\beta}[\mathcal{Z}(f)]\biggr).
\end{align}
In \eqref{derivJ}, both the first and second terms can be estimated using the standard techniques from Section~5 of \cite{MR4679708} or Section~3 of \cite{MR4596732}, except that the second term requires an application of the estimate \eqref{J1estimate} to desingularize the integrand upon integration with respect to \(\beta\). The desingularization is made possible by the factor of \(\abs{2\sin(\beta/2)}\) in \eqref{J1estimate}.

For \eqref{exp2}, we introduce \(I(w)\eqdef \frac{2}{w^{2}}-K_{2}(w)\). Differentiating with respect to \(\beta\) yields
\begin{align} \label{II}
	\frac{\partial}{\partial\beta}[I(w)]=I'(w)\frac{\partial w}{\partial\beta}=I_{1}(w)\biggl(\frac{\cos(\beta/2)}{2\sin(\beta/2)}+\frac{1}{2\mathcal{Z}(f)}\frac{\partial}{\partial\beta}[\mathcal{Z}(f)]\biggr),
\end{align}
where \(I_{1}(w) \eqdef -\frac{4}{w^{2}}+\frac{w}{2}(K_{1}(w)+K_{3}(w))\). Since there exists a uniform constant \(C_{I_{1}}>0\) such that \(\abs{I_{1}(y)} \leq C_{I_{1}}\abs{y}\) for all \(y \geq 0\), it follows that
\begin{align} \label{Iestimate}
	\abs{\mathcal{F}(I_{1}(w(\cdot,\beta)))(k)} \leq C_{I_{1}}\mathcal{F}(w(\cdot,\beta))(0)=C_{I_{1}}\lambda R \abs{2\sin(\beta/2)}\mathcal{F}(\mathcal{Z}(f)^{\frac{1}{2}})(0).
\end{align}
The right hand side of \eqref{II} can be estimated via the techniques from Section~5 of \cite{MR4679708} or Section~3 of \cite{MR4596732}, except that we resolve the singularity of the integrand upon integration with respect to \(\beta\) by applying the estimate \eqref{Iestimate}.

Lastly, for \eqref{exp3} we define \(H(w)\eqdef \frac{2}{w^{2}}-K_{2}(w)=\frac{1}{2}+H_{1}(w)\), where \(H_{1}(w)\eqdef -\frac{1}{2}+ \frac{2}{w^{2}}-K_{2}(w)\). We note that there exists a uniform constant \(C_{H_{1}}>0\) such that \(\abs{H_{1}(y)} \leq C_{H_{1}}\abs{y}\) for all \(y \geq 0\). Then
\begin{align} \label{H1estimate}
	\abs{\mathcal{F}(H_{1}(w(\cdot,\beta)))(k)} \leq&C_{H_{1}}\mathcal{F}(w(\cdot,\beta))(0)=C_{H_{1}}\lambda R\abs{2\sin(\beta/2)}\mathcal{F}(\mathcal{Z}(f)^{\frac{1}{2}})(0).
\end{align}
The expression \(H\) can then be estimated using the techniques from Section~5 of \cite{MR4679708} or Section~3 of \cite{MR4596732}, in addition to \eqref{H1estimate}.

We complete the derivation of the remaining estimates in Lemmas \ref{nonlinearestimate} and \ref{nonlinearestimatescont} by combining our estimates for modified Bessel functions of the second kind with the techniques from Section~5 of \cite{MR4679708} or Section~3 of \cite{MR4596732} along with the following explicit formulas: 
\begin{align} \label{pvsinint}
	&\mbox{pv}\int_{\mathbb{T}}\frac{e^{-\frac{i\beta}{2}l}}{\sin(\beta/2)}d\beta=
	\begin{cases}
		-2i\pi &\mbox{if \(l\) is odd, \(l>1\)} \\
		4i\biggl(\frac{\pi}{2}-2\sum_{m=0}^{\frac{l}{2}-1}\frac{(-1)^{m}}{2m+1}\biggr)-2i\pi &\mbox{if \(l\) is even, \(l>1\)} \\
		0 &\mbox{if \(l=0\)},
	\end{cases}
\end{align}
\begin{align} \label{finitesum}
	\frac{\sin(\abs{k}\beta/2)}{\abs{k}\sin(\beta/2)}=\frac{1}{\abs{k}}\sum_{m=0}^{\abs{k}-1}e^{i(-2m+\abs{k}-1)\beta/2}, \quad \abs{k} \geq 1,
\end{align}
\begin{align} \label{FourierDBFA}
	\mathcal{F}(\DBTF)(k)=
	\begin{cases}
		\frac{\sin(k\beta/2)}{k\sin(\beta/2)}e^{\frac{ik\beta}{2}}\mathcal{F}(f_{\alpha})(k) &\mbox{if \(k \neq 0\)}, \\
		0 &\mbox{if \(k=0\)},
	\end{cases}
\end{align}
\begin{align}
	\mathcal{F}\biggl(\frac{\partial}{\partial\beta}[\DBTF]\biggr)(k)=e^{ik\beta}\frac{1-\frac{\sin(k\beta/2)}{k\tan(\beta/2)}e^{-ik\beta/2}}{2\sin(\beta/2)}\mathcal{F}(f_{\alpha})(k).
\end{align}

\appendix
\section{Proof of the claim in \texorpdfstring{\eqref{kgeq2}}{kgeq2}} \label{appendA}
We define \(x(\beta)\eqdef \lambda R \abs{2\sin(\beta/2)}\), which yields the derivative \(\partial_{\beta}x(\beta)=\frac{\lambda R \sin\beta}{\abs{2\sin(\beta/2)}}\). Setting \(p(x)\eqdef \frac{2}{x^{2}}-K_{2}(x)\) and applying the identity \(K_{2}'(x)=-K_{1}(x)-\frac{2}{x}K_{2}(x)\), we observe that \(p'(x)=-2\biggl(\frac{2}{x^{3}}-\frac{K_{2}(x)}{x}-K_{1}(x)\biggr)-K_{1}(x)\). Using the relation \(\frac{1}{4}(e^{-i(-k+1)\beta}+e^{-i(-k-1)\beta})=e^{ik\beta}\frac{1}{2}\cos\beta\), we find
\begin{align*}
	I_{11}\eqdef& -4\int_{\mathbb{T}}\partial_{\beta}\biggl[\biggl(\frac{2}{\lambda^{2}R^{2}\abs{2\sin(\beta/2)}^{2}}-K_{2}(\lambda R \abs{2\sin(\beta/2)})\biggr)\sin^{2}(\beta/2)\biggr]\frac{1}{4}(e^{-i(-k+1)\beta}+e^{-i(-k-1)\beta})d\beta \\
	=&\int_{\mathbb{T}}[-2p'(x)\partial_{\beta}x(\beta) \sin^{2}(\beta/2)\cos\beta-p(x)\sin\beta\cos\beta]e^{ik\beta}d\beta.
\end{align*}
Similarly, invoking \(\frac{1}{4i}(e^{i(k-1)\beta}-e^{i(k+1)\beta})=-e^{ik\beta}\frac{1}{2}\sin\beta\), we obtain
\begin{align*}
	I_{12}\eqdef&-4\int_{\mathbb{T}}\partial_{\beta}\biggl[\biggl(\frac{2}{\lambda^{2}R^{2}\abs{2\sin(\beta/2)}^{2}}-K_{2}(\lambda R\abs{2\sin(\beta/2)})\biggr)\sin(\beta/2)\cos(\beta/2)\biggr]\frac{1}{4i}(e^{i(k-1)\beta}-e^{i(k+1)\beta})d\beta \\
	=&\int_{\mathbb{T}}[p'(x)\partial_{\beta}x(\beta) \sin^{2}\beta+p(x)\sin\beta\cos\beta]e^{ik\beta}d\beta.
\end{align*}
Adding these two expressions yields
\begin{align*}
	I_{11}+I_{12}=\int_{\mathbb{T}}p'(x)\partial_{\beta}x(\beta)(1-\cos\beta)e^{ik\beta}d\beta.
\end{align*}
Since \(K_{1}'(x)=-\frac{1}{2}(K_{0}(x)+K_{2}(x))\) and \(K_{2}(x)=-K_{1}'(x)+\frac{K_{1}(x)}{x}\), we obtain
\begin{align*}
	&\frac{\lambda^{2}R^{2}4\sin(\beta/2)\cos(\beta/2)}{(\lambda^{2}R^{2}4\sin^{2}(\beta/2))^{2}}-\frac{1}{2}\biggl[K_{0}(\lambda R(4\sin^{2}(\beta/2))^{1/2})+K_{2}(\lambda R(4\sin^{2}(\beta/2))^{1/2})\biggr] \\
	&\frac{2\lambda R}{(4\sin^{2}(\beta/2))^{1/2}}\frac{\sin(\beta/2)\cos(\beta/2)}{\lambda R(4\sin^{2}(\beta/2))^{1/2}}-\frac{K_{1}(\lambda R(4\sin^{2}(\beta/2))^{1/2})}{(\lambda R(4\sin^{2}(\beta/2))^{1/2})^{2}}\frac{2\lambda R}{(4\sin^{2}(\beta/2))^{1/2}}\sin(\beta/2)\cos(\beta/2) \\
	&-K_{1}(\lambda R(4\sin^{2}(\beta/2))^{1/2})\frac{2\lambda R}{(4\sin^{2}(\beta/2))^{1/2}}\sin(\beta/2)\cos(\beta/2) \\
	=&\partial_{\beta}x(\beta)(p'(x)+K_{1}(x))\biggl(-\frac{1}{2}\biggr).
\end{align*}
It follows that
\begin{align*}
	I_{13}\eqdef& -\int_{\mathbb{T}}\biggl[\frac{1}{2\tan(\beta/2)}+2\cos\beta\partial_{\beta}x(\beta)(p'(x)+K_{1}(x))\biggl(-\frac{1}{2}\biggr)\biggr]e^{ik\beta}d\beta \\
	=&\int_{\mathbb{T}}\biggl(-\frac{1}{2\tan(\beta/2)}+\cos\beta\partial_{\beta}x(\beta)(p'(x)+K_{1}(x))\biggr)e^{ik\beta}d\beta.
\end{align*}
Therefore,
\begin{align*}
	I_{1}=I_{11}+I_{12}+I_{13} =&\int_{\mathbb{T}}\biggl(-\frac{1}{2\tan(\beta/2)}+\frac{\lambda R\sin\beta\cos\beta}{\abs{2\sin(\beta/2)}}K_{1}(\lambda R\abs{2\sin(\beta/2)}) \\
    &+\partial_{\beta}\biggl[\frac{2}{(\lambda R\abs{2\sin(\beta/2)})^{2}}-K_{2}(\lambda R\abs{2\sin(\beta/2)})\biggr]\biggr)e^{ik\beta}d\beta.
\end{align*}
Using the identity \(\frac{\partial_{\beta}x(\beta)}{x}=\frac{1}{2}\cot(\beta/2)\), we write
\begin{align*}
	\partial_{\beta}\biggl[\frac{2}{(\lambda R\abs{2\sin(\beta/2)})^{2}}-K_{2}(\lambda R\abs{2\sin(\beta/2)})\biggr]=\frac{\lambda R\sin\beta}{\abs{2\sin(\beta/2)}}K_{1}(x)+\frac{1}{2}\cot(\beta/2)\biggl(-\frac{4}{x^{2}}+2K_{2}(x)\biggr).
\end{align*}
Then
\begin{align*}
	I_{1}=\int_{\mathbb{T}}\biggl(\cot(\beta/2)\biggl(-\frac{2}{x^{2}}+K_{2}(x)-\frac{1}{2}\biggr)+\frac{\lambda R\sin\beta\cos\beta}{\abs{2\sin(\beta/2)}}K_{1}(\lambda R\abs{2\sin(\beta/2)})+\frac{\lambda R\sin\beta}{\abs{2\sin(\beta/2)}}K_{1}(x)\biggr)e^{ik\beta}d\beta.
\end{align*}
It follows that
\begin{align*}
	\text{Im}(I_{1})=&2\int_{0}^{\pi}\biggl[\cot(\beta/2)\biggl(-\frac{1}{2\lambda^{2}R^{2}\sin^{2}(\beta/2)}+K_{2}(2\lambda R\sin(\beta/2))-\frac{1}{2}\biggr) \\
	&+\frac{\lambda R\sin\beta(\cos\beta+1)}{2\sin(\beta/2)}K_{1}(2\lambda R \sin(\beta/2))\biggr]\sin(k\beta)d\beta.
\end{align*}
Now, we introduce the auxiliary functions \(A(x)\eqdef\frac{1}{x^{2}}-\frac{1}{2}\frac{K_{1}(x)}{x}-\frac{1}{4}K_{0}(x)-\frac{1}{4}K_{2}(x)-\frac{x}{2}K_{1}(x)\), \(B(x)\eqdef-\frac{4}{x^{2}}+K_{2}(x)+\frac{x}{4}(K_{1}(x)+K_{3}(x))\), and \(C(x)\eqdef\frac{2}{x^{2}}-K_{2}(x)\). By combining the identities \(-\frac{1}{4}K_{0}(x)-\frac{1}{4}K_{2}(x)=\frac{1}{2}K_{1}'(x)\) and \(K_{1}'(x)=-K_{2}(x)+\frac{1}{x}K_{1}(x)\), we deduce that \(A(x)=\frac{1}{2}C(x)-\frac{x}{2}K_{1}(x)\). Furthermore, the relation \(\frac{x}{4}(K_{1}(x)+K_{3}(x))=-\frac{x}{2}K_{2}'(x)\) implies \(C'(x)=-\frac{4}{x^{3}}+\frac{1}{2}(K_{1}(x)+K_{3}(x))\), from which we find \(B(x)=-C(x)+\frac{x}{2}C'(x)\). Since \(K_{2}'(x)=-K_{1}(x)-\frac{2}{x}K_{2}(x)\), we obtain \(C'(x)=K_{1}(x)-\frac{2}{x}C(x)\). Next, we define \(q(\beta)\eqdef -\frac{1}{2}\sin\beta C(x)\). Differentiating with respect to \(\beta\), we obtain \(q'(\beta)=\frac{1}{2}C(x)-\frac{x}{2}\cos^{2}(\beta/2)K_{1}(x)\). Therefore,
\begin{align*}
	&\int_{\mathbb{T}}\biggl(\frac{1}{\lambda^{2}R^{2}\abs{2\sin(\beta/2)}^{2}}-\frac{1}{2}\frac{K_{1}(\lambda R\abs{2\sin(\beta/2)})}{\lambda R\abs{2\sin(\beta/2)}}-\frac{1}{4}K_{0}(\lambda R\abs{2\sin(\beta/2)})-\frac{1}{4}K_{2}(\lambda R \abs{2\sin(\beta/2)}) \\
	&-\frac{\lambda R\abs{2\sin(\beta/2)}}{2}K_{1}(\lambda R\abs{2\sin(\beta/2)})\biggr)\cos\beta d\beta \\
	+&\int_{\mathbb{T}}-\sin^{2}(\beta/2)\biggl(-\frac{4}{\lambda^{2}R^{2}\abs{2\sin(\beta/2)}^{2}}+K_{2}(\lambda R\abs{2\sin(\beta/2)})+\frac{\lambda R\abs{2\sin(\beta/2)}}{4}(K_{1}(\lambda R\abs{2\sin(\beta/2)}) \\
	&+K_{3}(\lambda R\abs{2\sin(\beta/2)}))\biggr)d\beta+\int_{\mathbb{T}}-\sin^{2}(\beta/2)\biggl(\frac{2}{\lambda^{2}R^{2}\abs{2\sin(\beta/2)}^{2}}-K_{2}(\lambda R\abs{2\sin(\beta/2)})\biggr)d\beta \\
	=&\int_{\mathbb{T}}\frac{1}{2}C(x)+K_{1}(x)\frac{x}{2}(-\cos^{2}(\beta/2))d\beta=\int_{\mathbb{T}}q'(\beta)d\beta =0.
\end{align*}
Moreover, we observe that
\begin{align*}
	&\int_{\mathbb{T}}\biggl(\frac{1}{\lambda^{2}R^{2}\abs{2\sin(\beta/2)}^{2}}-\frac{1}{2}\frac{K_{1}(\lambda R\abs{2\sin(\beta/2)})}{\lambda R\abs{2\sin(\beta/2)}}-\frac{1}{4}K_{0}(\lambda R\abs{2\sin(\beta/2)})-\frac{1}{4}K_{2}(\lambda R \abs{2\sin(\beta/2)}) \\
	&-\frac{\lambda R\abs{2\sin(\beta/2)}}{2}K_{1}(\lambda R\abs{2\sin(\beta/2)})\biggr)\cos\beta e^{ik\beta}d\beta \\
	&+\int_{\mathbb{T}}-\sin^{2}(\beta/2)\biggl(-\frac{4}{\lambda^{2}R^{2}\abs{2\sin(\beta/2)}^{2}}+K_{2}(\lambda R\abs{2\sin(\beta/2)}) \\
	&+\frac{\lambda R\abs{2\sin(\beta/2)}}{4}(K_{1}(\lambda R\abs{2\sin(\beta/2)})+K_{3}(\lambda R\abs{2\sin(\beta/2)}))\biggr)e^{ik\beta}d\beta \\
	&+\int_{\mathbb{T}}-\sin^{2}(\beta/2)(2+\cos\beta)\biggl(\frac{2}{\lambda^{2}R^{2}\abs{2\sin(\beta/2)}^{2}}-K_{2}(\lambda R\abs{2\sin(\beta/2)})\biggr)e^{ik\beta}d\beta \\
	=&\int_{\mathbb{T}}(A(x)\cos\beta-\sin^{2}(\beta/2)B(x)-\sin^{2}(\beta/2)C(x))e^{ik\beta}d\beta+\int_{\mathbb{T}}-\frac{1}{2}\sin^{2}\beta C(x)e^{ik\beta}d\beta \\
	=&\int_{\mathbb{T}}q'(\beta)e^{ik\beta}d\beta+\int_{\mathbb{T}}-\frac{1}{2}\sin^{2}\beta C(x)e^{ik\beta}d\beta \\
	=&\int_{\mathbb{T}}\frac{1}{2}\sin\beta(ik-\sin\beta)\biggl(\frac{2}{\lambda^{2}R^{2}\abs{2\sin(\beta/2)}^{2}}-K_{2}(\lambda R\abs{2\sin(\beta/2)})\biggr)e^{ik\beta}d\beta.
\end{align*}
It follows that
\begin{align*}
	\text{Re}(I_{2})=-\int_{0}^{\pi}\sin\beta(k\sin(k\beta)+\sin\beta\cos(k\beta))\biggl(\frac{1}{2\lambda^{2}R^{2}\sin^{2}(\beta/2)}-K_{2}(2\lambda R \sin(\beta/2))\biggr)d\beta.
\end{align*}
We define
\begin{align*}
	f(\lambda,R,k)\eqdef& \int_{0}^{\pi}\biggl[2\biggl(-p(x)-\frac{1}{2}\biggr)\cot(\beta/2)+\frac{\lambda R\sin\beta(1+\cos\beta)}{\sin(\beta/2)}K_{1}(x)\biggr]\sin(k\beta)d\beta \\
	=&\int_{0}^{\pi}(4\lambda R\cos^{3}(\beta/2)p'(x)-\cot(\beta/2)+2p(x)\cot(\beta/2)\cos\beta)\sin(k\beta)d\beta.
\end{align*}
Since \(4\lambda R\cos^{3}(\beta/2)p'(x)=4\cos^{2}(\beta/2)\frac{\partial}{\partial\beta}[p(x)]\), integration by parts yields
\begin{align*}
	\int_{0}^{\pi}4\lambda R\cos^{3}(\beta/2)p'(x)\sin(k\beta)d\beta =\int_{0}^{\pi}p(x)(-4k\cos^{2}(\beta/2)\cos(k\beta)+4\cos(\beta/2)\sin(\beta/2)\sin(k\beta))d\beta.
\end{align*}
Moreover, we let
\begin{align*}
	g(\lambda,R,k)\eqdef& -\int_{0}^{\pi}\sin\beta(k\sin(k\beta)+\sin\beta\cos(k\beta))\biggl(\frac{1}{2\lambda^{2}R^{2}\sin^{2}(\beta/2)}-K_{2}(2\lambda R\sin(\beta/2))\biggr)d\beta \\
	=&-\int_{0}^{\pi}\sin\beta(k\sin(k\beta)+\sin\beta\cos(k\beta))p(x)d\beta.
\end{align*}
For \(k \neq 0\), it follows that
\begin{align*}
	f(\lambda,R,k)+\frac{2}{k}g(\lambda,R,k)=-\pi-\int_{0}^{\pi}p(x)\biggl(\frac{2\cos(k\beta)(k^{2}(1+\cos\beta)+\sin^{2}\beta)}{k}-2\cos\beta\cot(\beta/2)\sin(k\beta)\biggr)d\beta.
\end{align*}
We proceed to prove that for \(k>1\),
\begin{align*}
	-\int_{0}^{\pi}p(x)\biggl(\frac{2\cos(k\beta)(k^{2}(1+\cos\beta)+\sin^{2}\beta)}{k}-2\cos\beta\cot(\beta/2)\sin(k\beta)\biggr)d\beta>0.
\end{align*}
Let \(W(\beta)\eqdef p(x(\beta))\), where \(x(\beta)=2\lambda R\sin(\beta/2)\). We note that \(W(\beta)>0\), \(\lim_{\beta \to 0}W(\beta)=\frac{1}{2}\), and \(W'(\beta) \leq 0\). We also define the auxiliary function
\begin{align*}
	M_{k}(\beta)\eqdef \int_{0}^{\beta}\frac{2\cos(k\beta')(k^{2}(1+\cos\beta')+\sin^{2}\beta')}{k}-2\cos\beta'\cot(\beta'/2)\sin(k\beta')d\beta'.
\end{align*}
Integration by parts yields
\begin{align*}
	&-\int_{0}^{\pi}W(\beta)\biggl(\frac{2\cos(k\beta)(k^{2}(1+\cos\beta)+\sin^{2}\beta)}{k}-2\cos\beta\cot(\beta/2)\sin(k\beta)\biggr)d\beta \\
	=&-W(\pi)M_{k}(\pi)+\int_{0}^{\pi}W'(\beta)M_{k}(\beta)d\beta.
\end{align*}
Since \(W(\beta)>0\) and \(W'(\beta) \leq 0\), it suffices to show that \(M_{k}(\beta) \leq 0\) for all \(\beta \in (0,\pi)\) and \(M_{k}(\pi)<0\).

\subsection{The \texorpdfstring{\(k=2\)}{k=2} case}
A straightforward calculation shows that
\begin{align*}
	M_{2}(\beta)=&\int_{0}^{\beta}\sin^{2}\beta'(2\cos^{2}\beta'-4\cos\beta'-5)d\beta' =-\frac{9}{4}\beta+\frac{5}{4}\sin(2\beta)-\frac{1}{16}\sin(4\beta)-\frac{4}{3}\sin^{3}\beta.
\end{align*}
Then \(M_{2}(\pi)=-\frac{9\pi}{4}<0\). To show that \(M_{2} \leq 0\) on \([0,\pi]\), we divide the interval into two overlapping ones. First, observe that the term \(2\cos^{2}\beta'-4\cos\beta'-5=2(\cos\beta'-1)^{2}-7\) is strictly negative if and only if \(\cos\beta'>1-\sqrt{3.5}\), which corresponds to the condition \(\beta' < \arccos(1-\sqrt{3.5})\). Therefore, \(M_{2}'(\beta)=\sin^{2}\beta(2(\cos\beta-1)^{2}-7)<0\) on \((0,\arccos(1-\sqrt{3.5}))\). Since \(M_{2}(0)=0\), it follows that \(M_{2}<0\) on this interval. Now, if \(\beta \in (\frac{7}{12},\pi)\), then \(M_{2}(\beta) \leq -\frac{9}{4}\beta+\frac{5}{4}+\frac{1}{16}=-\frac{9}{4}\beta+\frac{21}{16}<0\). Therefore, \(M_{2}<0\) on this interval.

\subsection{The \texorpdfstring{\(k \geq 3\)}{kgeq3} case}
We first establish the sign of \(M_{k}\) on \((0,\frac{\pi}{k}]=(0,\frac{\pi}{2k})\cup[\frac{\pi}{2k},\frac{\pi}{k}]\). The integrand of \(M_{k}\) can be written as \(2(1+\cos\beta')\cos(k\beta')(k+\frac{1-\cos\beta'}{k}-\cot\beta'\tan(k\beta'))\). Since \(\tan(k\beta') \geq k\tan\beta'\) for any \(\beta' \in (0,\frac{\pi}{2k})\), the integrand of \(M_{k}\) is less than or equal to \(2(1+\cos\beta')\cos(k\beta')\frac{1-\cos\beta'}{k}\) on this interval, which is non-positive. It follows that \(M_{k} \leq 0\) on this interval. Since \(\cos(k\beta') \leq 0\) and \(\sin(k\beta')\geq 0\) for any \(\beta' \in [\frac{\pi}{2k},\frac{\pi}{k}]\), the integrand of \(M_{k}(\beta)\) is non-positive there. Therefore, \(M_{k} \leq 0\) on this interval.

Next, we establish the sign of \(M_{k}\) on \((\frac{\pi}{k},\pi]\). The integrand of \(M_{k}(\beta)\) can be written as
\begin{align*}
	&-2\cot(\beta'/2)\sin(k\beta')+\frac{\partial}{\partial\beta'}[2(1+\cos\beta')\sin(k\beta')]+4\sin\beta'\sin(k\beta')+\frac{2}{k}\sin^{2}\beta'\cos(k\beta').
\end{align*}
Then
\begin{align*}
	M_{k}(\beta)=&-2\int_{0}^{\beta}\cot(\beta'/2)\sin(k\beta')d\beta'+2(1+\cos\beta)\sin(k\beta)+\int_{0}^{\beta}4\sin\beta'\sin(k\beta')+\frac{2}{k}\sin^{2}\beta'\cos(k\beta')d\beta'.
\end{align*} 
Since \(\cot(\beta/2) \geq \frac{2}{\beta}-\frac{\beta}{4}\) for any \(\beta \in (0,\pi)\) and the minimum of \(\mbox{Si}(y)\) for \(y \geq \pi\) is attained at \(y=2\pi\), we obtain
\begin{align*}
	\int_{0}^{\beta}\cot(\beta'/2)\sin(k\beta')d\beta' \geq& \int_{0}^{\beta}\frac{2}{\beta'}\sin(k\beta')d\beta'-\int_{0}^{\beta}\frac{\beta'}{4}\sin(k\beta')d\beta'\geq 2\mbox{Si}(2\pi)-\frac{-k\beta\cos(k\beta)+\sin(k\beta)}{4k^{2}}.
\end{align*}
Then
\begin{align*}
	-2\int_{0}^{\beta}\cot(\beta'/2)\sin(k\beta')d\beta' \leq& -4\mbox{Si}(2\pi)+\frac{-k\beta\cos(k\beta)+\sin(k\beta)}{2k^{2}} \leq -4\mbox{Si}(2\pi)+\frac{k\pi+1}{2k^{2}}.
\end{align*}
Moreover, we note that
\begin{align*}
	&\int_{0}^{\beta}4\sin\beta'\sin(k\beta')+\frac{2}{k}\sin^{2}\beta'\cos(k\beta')d\beta' \\
	=&2\biggl(\frac{\sin((k-1)\beta)}{k-1}-\frac{\sin((k+1)\beta)}{k+1}\biggr)+\frac{1}{k}\biggl(\frac{\sin(k\beta)}{k}-\frac{\sin((k-2)\beta)}{2(k-2)}-\frac{\sin((k+2)\beta)}{2(k+2)}\biggr) \\
	\leq&2\biggl(\frac{1}{k-1}+\frac{1}{k+1}\biggr)+\frac{1}{k}\biggl(\frac{1}{k}+\frac{1}{2(k-2)}+\frac{1}{2(k+2)}\biggr)=\frac{4k}{k^{2}-1}+\frac{1}{k^{2}}+\frac{1}{k^{2}-4}.
\end{align*}
Hence,
\begin{align*}
	M_{k}(\beta) \leq& -4\mbox{Si}(2\pi)+\frac{k\pi+1}{2k^{2}}+2(1+\cos\beta)\sin(k\beta)+\frac{4k}{k^{2}-1}+\frac{1}{k^{2}}+\frac{1}{k^{2}-4} \\
	\leq& -4\mbox{Si}(2\pi)+\frac{k\pi+1}{2k^{2}}+2(1+\cos(\pi/k))+\frac{4k}{k^{2}-1}+\frac{1}{k^{2}}+\frac{1}{k^{2}-4}.
\end{align*}
This bound implies that \(M_{k} < 0\) on \((\frac{\pi}{k},\pi]\).

\section{Proof of the claim in \texorpdfstring{\eqref{keq1}}{keq1}} \label{appendB}
We begin by noting that
\begin{align*}
	M_{1}(\beta)=&\int_{0}^{\beta}2\sin^{2}\beta'\cos\beta'd\beta'=\frac{2}{3}\sin^{3}\beta.
\end{align*}
Integration by parts yields
\begin{align*}
	&-\int_{0}^{\pi}W(\beta)2\sin^{2}\beta\cos\beta d\beta =-W(\pi)M_{1}(\pi)+\int_{0}^{\pi}W'(\beta)M_{1}(\beta)d\beta=\int_{0}^{\pi}W'(\beta)M_{1}(\beta)d\beta<0
\end{align*}
since \(M_{1}> 0\) and \(W'(\beta)<0\) on \((0,\pi)\), as needed.

\nocite{*}
\bibliographystyle{abbrv}
\bibliography{bibliography.bib}

@article{Ehlers2021,
	author    = {Wolfgang Ehlers},
	title     = {Darcy, Forchheimer, Brinkman and Richards: classical hydromechanical equations and their significance in the light of the TPM},
	journal   = {Archive of Applied Mechanics},
	volume    = {91},
	number    = {2},
	pages     = {321--341},
	year      = {2021},
	doi       = {10.1007/s00419-020-01802-3},
	url       = {https://link.springer.com/article/10.1007/s00419-020-01802-3}
}

@article {MR2389403,
	AUTHOR = {Tsai, C. C.},
	TITLE = {Solutions of slow {B}rinkman flows using the method of
	fundamental solutions},
	JOURNAL = {Internat. J. Numer. Methods Fluids},
	FJOURNAL = {International Journal for Numerical Methods in Fluids},
	VOLUME = {56},
	YEAR = {2008},
	NUMBER = {7},
	PAGES = {927--940},
	ISSN = {0271-2091,1097-0363},
	MRCLASS = {76S05 (76M25)},
	MRNUMBER = {2389403},
	DOI = {10.1002/fld.1559},
	URL = {https://doi.org/10.1002/fld.1559},
}

@article {MR4596732,
	AUTHOR = {Garc\'ia-Ju\'arez, Eduardo and Mori, Yoichiro and Strain,
	Robert M.},
	TITLE = {The {P}eskin problem with viscosity contrast},
	JOURNAL = {Anal. PDE},
	FJOURNAL = {Analysis \& PDE},
	VOLUME = {16},
	YEAR = {2023},
	NUMBER = {3},
	PAGES = {785--838},
	ISSN = {2157-5045,1948-206X},
	MRCLASS = {35Q35 (35C10 35C15 35R11 35R35 76D07)},
	MRNUMBER = {4596732},
	DOI = {10.2140/apde.2023.16.785},
	URL = {https://doi.org/10.2140/apde.2023.16.785},
}

@article {MR4679708,
	AUTHOR = {Gancedo, Francisco and Garc\'ia-Ju\'arez, Eduardo and Patel,
	Neel and Strain, Robert M.},
	TITLE = {Global regularity for gravity unstable {M}uskat bubbles},
	JOURNAL = {Mem. Amer. Math. Soc.},
	FJOURNAL = {Memoirs of the American Mathematical Society},
	VOLUME = {292},
	YEAR = {2023},
	NUMBER = {1455},
	PAGES = {v+87},
	ISSN = {0065-9266,1947-6221},
	ISBN = {978-1-4704-6764-7; 978-1-4704-7700-4},
	MRCLASS = {35Q35 (35A01 35D30 35Q86 76T10)},
	MRNUMBER = {4679708},
	MRREVIEWER = {Huy\ Quang\ Nguyen},
	DOI = {10.1090/memo/1455},
	URL = {https://doi.org/10.1090/memo/1455},
}

@article {MR4756023,
	AUTHOR = {Kundu, Sahil and Maharana, Surya Narayan and Mishra,
	Manoranjan},
	TITLE = {Existence and uniqueness of solution to unsteady
	{D}arcy-{B}rinkman problem with {K}orteweg stress for
	modelling miscible porous media flow},
	JOURNAL = {J. Math. Anal. Appl.},
	FJOURNAL = {Journal of Mathematical Analysis and Applications},
	VOLUME = {539},
	YEAR = {2024},
	NUMBER = {2},
	PAGES = {Paper No. 128532, 20},
	ISSN = {0022-247X,1096-0813},
	MRCLASS = {35Q35 (35B30 76R50 76S05)},
	MRNUMBER = {4756023},
	DOI = {10.1016/j.jmaa.2024.128532},
	URL = {https://doi.org/10.1016/j.jmaa.2024.128532},
}

@article {MR4545960,
	AUTHOR = {Kumankat, Nisachon and Wuttanachamsri, Kanognudge},
	TITLE = {Well-posedness of generalized {S}tokes-{B}rinkman equations
	modeling moving solid phases},
	JOURNAL = {Electron. Res. Arch.},
	FJOURNAL = {Electronic Research Archive},
	VOLUME = {31},
	YEAR = {2023},
	NUMBER = {3},
	PAGES = {1641--1661},
	ISSN = {2688-1594},
	MRCLASS = {76S05 (47N20)},
	MRNUMBER = {4545960},
	MRREVIEWER = {Mirela\ Kohr},
	DOI = {10.3934/era.2023085},
	URL = {https://doi.org/10.3934/era.2023085},
}

@article {MR4314117,
	AUTHOR = {Mohan, Manil T.},
	TITLE = {{$\Bbb L^p$}-solutions of deterministic and stochastic
	convective {B}rinkman-{F}orchheimer equations},
	JOURNAL = {Anal. Math. Phys.},
	FJOURNAL = {Analysis and Mathematical Physics},
	VOLUME = {11},
	YEAR = {2021},
	NUMBER = {4},
	PAGES = {Paper No. 164, 33},
	ISSN = {1664-2368,1664-235X},
	MRCLASS = {76D06 (35Q30 47D06 76D03)},
	MRNUMBER = {4314117},
	MRREVIEWER = {Nikolai\ Vasilievich\ Chemetov},
	DOI = {10.1007/s13324-021-00595-0},
	URL = {https://doi.org/10.1007/s13324-021-00595-0},
}

@article {MR3633543,
	AUTHOR = {Howell, Jason S. and Neilan, Michael and Walkington, Noel J.},
	TITLE = {A dual-mixed finite element method for the {B}rinkman problem},
	JOURNAL = {SMAI J. Comput. Math.},
	FJOURNAL = {SMAI Journal of Computational Mathematics},
	VOLUME = {2},
	YEAR = {2016},
	PAGES = {1--17},
	ISSN = {2426-8399},
	MRCLASS = {65N30 (65N15 76M10 76S05)},
	MRNUMBER = {3633543},
	MRREVIEWER = {T.\ A.\ Angelov},
	DOI = {10.5802/smai-jcm.7},
	URL = {https://doi.org/10.5802/smai-jcm.7},
}

@article {MR4673875,
	AUTHOR = {Cameron, Stephen and Strain, Robert M.},
	TITLE = {Critical local well-posedness for the fully nonlinear {P}eskin
	problem},
	JOURNAL = {Comm. Pure Appl. Math.},
	FJOURNAL = {Communications on Pure and Applied Mathematics},
	VOLUME = {77},
	YEAR = {2024},
	NUMBER = {2},
	PAGES = {901--989},
	ISSN = {0010-3640,1097-0312},
	MRCLASS = {35Q35 (35C15 35R11 35R35 76D07)},
	MRNUMBER = {4673875},
	DOI = {10.1002/cpa.22139},
	URL = {https://doi.org/10.1002/cpa.22139},
}

\end{document}